\documentclass[10pt, a4paper, reqno]{amsart}
\usepackage{amscd}
\usepackage{dsfont}

\usepackage{diagrams}

\usepackage{amsmath}
\usepackage{amstext}
\usepackage{amsthm}
\usepackage{amssymb}
\usepackage{amsopn}
\usepackage{amssymb}
\usepackage{amsxtra}
\usepackage{amsfonts}
\usepackage{fancyhdr}
\usepackage{paralist, epsfig}
\usepackage[mathcal]{euscript}
\usepackage[english]{babel}
\usepackage[latin1]{inputenc}

\oddsidemargin27mm
\evensidemargin27mm
\setlength{\textwidth}{155mm}
\setlength{\textheight}{235mm}
\setlength{\hoffset}{-25mm}
\setlength{\voffset}{-10mm}
\setlength{\footskip}{8mm}
\setlength{\parindent}{0mm}
\setlength{\parskip}{0.5ex}
\setlength{\headheight}{0mm}

\makeatletter
\g@addto@macro\normalsize{%
  \setlength\abovedisplayskip{10pt}
  \setlength\belowdisplayskip{10pt}
  \setlength\abovedisplayshortskip{5pt}
  \setlength\belowdisplayshortskip{8pt}
}

\allowdisplaybreaks

\newtheoremstyle{normal}
{5pt}
{5pt}
{\normalfont}
{}
{\bfseries}
{}
{0.4em}
{\bfseries{\thmname{#1}\thmnumber{ #2}.\thmnote{ \hspace{0.5em}(#3)\newline}}}

\newtheoremstyle{kursiv}
{5pt}
{5pt}
{\itshape}
{}
{\bfseries}
{}
{0.4em}
{\bfseries{\thmname{#1}\thmnumber{ #2}.\thmnote{ \hspace{0.5em}(#3)\newline}}}

\theoremstyle{kursiv}

\theoremstyle{normal}

\newtheorem{thm}{Theorem}[section]

\newtheorem{lem}[thm]{Lemma}
\newtheorem{prop}[thm]{Proposition}
\newtheorem{dfn}[thm]{Definition}

\newcommand{\columnvec}[2]{\genfrac{[}{]}{0pt}{}{\,#1\,}{#2}}

\renewcommand{\epsilon}{\varepsilon}

\newcommand{\id}{\operatorname{id}\nolimits}

\renewcommand{\Re}{\operatorname{Re}\nolimits}
\renewcommand{\Im}{\operatorname{Im}\nolimits}
\newcommand{\rk}{\operatorname{rk}\nolimits}

\newcommand{\ran}{\operatorname{ran}\nolimits}

\newcommand{\Cnull}{\operatorname{C_0}\nolimits}

\setlength{\parskip}{\baselineskip}
\fancyhead{}
\fancyhead[RE]{}
\fancyhead[LO]{}
\fancyhead[RO,LE]{}
\fancyfoot[C]{$\scriptstyle\thepage$}

\pagestyle{fancy}
\setlength{\parskip}{0cm}

\usepackage{setspace}
\usepackage{color}
\newcommand{\red}[1]{#1}

\definecolor{grey}{gray}{.3}

\setlength{\marginparwidth}{2cm}

\DeclareFontEncoding{FML}{}{}
\DeclareFontSubstitution{FML}{futm}{m}{it}

\DeclareSymbolFont{gletters}{FML}{futm}{m}{it}

\DeclareMathSymbol{\PHI}{\mathord}{gletters}   {30}

\DeclareMathSymbol{\PSI}{\mathord}{gletters}  {32}
\DeclareMathSymbol{\THETA}{\mathord}{gletters}  {18}

\DeclareMathSymbol{\KAPPA}{\mathord}{gletters}  {20}

\renewcommand{\Phi}{\PHI}

\renewcommand{\Psi}{\PSI}

\renewcommand{\Theta}{\THETA}

\newarrow{Line}{-}{-}{-}{-}{-}

\begin{document}

\title{Boundary Triplets for skew-symmetric operators and\\the Generation of strongly continuous semigroups}

\author{Sven-Ake Wegner\hspace{0.5pt}\MakeLowercase{$^{\text{1}}$}}

\renewcommand{\thefootnote}{}
\hspace{-1000pt}\footnote{\hspace{5.5pt}2010 \emph{Mathematics Subject Classification}: Primary 47D06, Secondary 35G15, 47B44.}
\hspace{-1000pt}\footnote{\hspace{5.5pt}\emph{Key words and phrases}: Boundary triplet, $\Cnull$-semigroup, dissipative extensions, port-Hamiltonian system.\vspace{1.6pt}}

\hspace{-1000pt}\footnote{\hspace{0pt}$^{1}$\,University of Wuppertal, School of Mathematics and Natural Sciences, Gau\ss{}stra\ss{}e 20, 42119 Wuppertal, Germany,\linebreak\phantom{x}\hspace{1.2pt}Phone:\hspace{1.2pt}\hspace{1.2pt}+49\hspace{1.2pt}(0)202\hspace{1.2pt}/\hspace{1.2pt}439\hspace{1.2pt}-\hspace{1.2pt}2590, Fax:\hspace{1.2pt}\hspace{1.2pt}+49\hspace{1.2pt}(0)\hspace{1.2pt}202\hspace{1.2pt}/\hspace{1.2pt}439\hspace{1.2pt}-\hspace{1.2pt}3724, E-Mail: wegner@math.uni-wuppertal.de.\vspace{1.6pt}}

\begin{abstract} We give a self-contained and streamlined exposition of a generation theorem for $\Cnull$-semigroups based on the method of boundary triplets. We apply this theorem to port-Hamiltonian systems where we discuss recent results appearing in stability and control theory. We give detailed proofs and require only a basic knowledge of operator and semigroup theory.
\end{abstract}

\maketitle


\section{Introduction}\label{SEC-1}

\smallskip

It is a prominent result from the theory of unbounded operators that there is a one-to-one correspondence between unbounded self-adjoint and bounded unitary operators. This correspondence is a key ingredient for the characterization of those symmetric operators which allow for a self-adjoint extension, established by von Neumann in 1929.

\smallskip

Given a skew-symmetric operator $A\colon D(A)\subseteq X\rightarrow X$ on a Hilbert space $X$, a variant of the correspondence above gives rise to a bijection between dissipative extensions and a certain class of contractive operators. This in turn yields a parametrization of those extensions generating a $\Cnull$-semigroup of contractions
$$
\bigg\{\hspace{-3pt}\begin{array}{c}
B\colon D(B)\rightarrow X \text{ such that }A\subseteq B \text{ \& } B\text{ ge-}\\
 \text{nerates a $\Cnull$-semigroup of contractions}
\end{array}\hspace{-3pt}\bigg\}
\;\stackrel{1\text{-}1}{\longleftrightarrow}\;
\bigg\{\hspace{-3pt}\begin{array}{c}
K\colon X\rightarrow X \text{ is contractive \& }\\
 K(1-A)=(1+A) \text{ on } D(A)
\end{array}\hspace{-3pt}\bigg\}
$$
which was established by Phillips in 1959. Above, the contractions are defined on the initial Hilbert space $X$. The fact that on $\ran(1-A)$ the operator $K$ is completely determined by $A$ suggests that  a smaller domain can be enough for the parametrization. In addition, classical applications to boundary value problems---where the extension problem amounts to determine the correct boundary conditions yielding a generator---also suggest that the degrees of freedom in the extension problem can be captured by operators defined on a much smaller, possibly even finite-dimensional, space.

\smallskip

Boundary triplets are the technical incarnation of this idea. Beginning in the late 1960's their theory was developed for the case of a symmetric operator $A$ by making use of linear relations in the statements as well as in the proofs. We refer to Gorbachuk, Gorbachuk \cite[p.\,320]{GG} for detailed historical information. Nowadays, half a century later, boundary triplets appear in several very active research areas. As a sample of recent papers in this direction we mention Derkach, Malamud \cite{DM1991, DM1995}, Malamud \cite{M1992},  Behrndt, Langer \cite{BL2007, BL2012},  Behrndt, Hassi, de Snoo, Wietsma \cite{BHdSW2011}, Arlinski\u\i{} \cite{A2012}, Derkach, Hassi, Malamud, de Snoo \cite{DHMdS2006, DHMdS2009, DHMdS2012} and direct the reader also to the references therein. \red{We point out that the trivial boundary triplet, i.e., the space in the parametrization is zero, corresponds to the situation where the initial operator $A$ is self-adjoint. This exhibits a link between the above and Stone's 1930 theorem on the one-to-one correspondence between self-adjoint operators and unitary $\Cnull$-semigroups.} 

\smallskip

Our aim in this article is to give a self-contained and streamlined exposition of the technique of boundary triplets in the case of a skew-symmetric operator $A$. Our account is mainly based on the material presented in the books of Gorbachuk, Gorbachuk \cite[Chapter 3.1]{GG} and Schm\"udgen \cite[Section 14]{S}. The principal difference of our approach is that we develop from the beginning all technical details in the skew-symmetric situation and that we restrict ourselves to dissipative operators instead of dissipative relations. As it turns out, it is then possible to avoid relations also in all proofs. We believe that this approach is of value in particular for beginners in the subject who have a semigroup background or are heading in a semigroup direction.

\smallskip

In addition to a rigorous exposition of boundary triplets in the skew-symmetric situation and without the use of relations, we give a modern application of the method of boundary triplets. Also here our exposition is very detailed and intended for beginners in this area. The content is taken from Le Gorrec, Zwart, Maschke \cite{GZM2005} who applied the method of boundary triplets to port-Hamiltonian systems. Other applications in systems theory are for instance given in Malinen, Staffans \cite{MS2007}, Villegas, Zwart, Le Gorrec, Maschke \cite{VZGM2009}, Arov, Kurula, Staffans \cite{AKS2012} or Kurula, Zwart \cite{KZ2015}. A general reference for port-Hamiltonian systems is the book by Jacob, Zwart \cite{JZ} from whence we also took several arguments used in Section \ref{SEC-6}. Recent results, further improving the current generation theorem, are contained in Augner, Jacob \cite{AJ2014} and Jacob, Morris, Zwart \cite{JMZ2015}.

\smallskip

We require the reader to be familiar with the basics of functional analysis as they are usually contained in a first course on this topic, i.e., Banach spaces, linear operators, Hahn-Banach theorem, closed graph and open mapping theorems, Hilbert spaces, orthogonality, Riesz-Fr\'{e}chet representation theorem etc. For references concerning those basics we use the textbook of Conway \cite{C}. In addition we require some facts about unbounded operators but not more than the definition of the adjoint and some of its consequences. We advise the reader to consult \cite[Section X.1, p.\,303--308]{C} if necessary. Finally, we require some knowledge on operator semigroups, but again not more than the definition, the notion of a generator and the Hille-Yosida theorem. Here, we recommend to use the introduction given by Rudin \cite[p.\,375--385]{R}.

\smallskip

This article is organized as follows. In Section \ref{SEC-2} we first collect basic facts about dissipative operators and the Cayley transform. Then we prove that all dissipative extensions of a given skew-symmetric operator are modulo a sign restrictions of its adjoint. In Section \ref{SEC-3} we give a version of the Lumer-Phillips theorem which fits to our situation and whose proof relies only on the prerequisites we mentioned above. Readers familiar with the Lumer-Phillips theorem can skip this section, but are advised to browse the corresponding comments in Section \ref{SEC-7}. Section \ref{SEC-4} contains the parametrization of generators via boundary triplets. The characterization of those operators for which a boundary triplet exists is given in Section \ref{SEC-5}. Readers who are more focused on applications may also skip this section and move on directly to Section \ref{SEC-6} on port-Hamiltonian systems.

\smallskip

The dependencies between the sections are visualized below.
\begin{diagram}[height=1.55em,width=1.5em]
   & & 1 & & \\ 
   & & \dTo  & & \\ 
   & & 2 & & \\ 
3  &\ldTo(2,1) & \dTo & & \\ 
   &\rdTo(2,1) & 4  & & \\ 
   & & \dTo &\rdTo & \\ 
   & & 6 & & 5\\ 
   & & \dTo  &\ldTo & \\ 
   & & 7 & & \\ 
\end{diagram}

\smallskip

In Sections \ref{SEC-2}--\ref{SEC-6} we omit on purpose all references to the original articles. We believe that this makes the text more accessible and easier to read---in particular for beginners. Later, in Section \ref{SEC-7}, we go through the article again and collect all references omitted before. Moreover, we add comments and remarks on related work.

\bigskip


\section{Dissipative Operators}\label{SEC-2}

In what follows we discuss the basic theory of skew-symmetric operators, dissipative extensions and the Cayley transform. We fix a complex Hilbert space $X$ and we assume that its inner product $\langle{}\cdot,\cdot\rangle{}$ is linear in the first argument and conjugate linear in the second argument. By a linear operator in $X$ we mean an operator $A\colon D(A)\rightarrow X$ with a linear subspace $D(A)\subseteq X$ as domain. The symbol \textquotedblleft{}$A$\textquotedblright{} for an operator comprises that a domain is given and fixed even if not mentioned explicitly. If $A$ is an operator in $X$ and $\lambda$ a complex number, then we write for simplicity $\lambda-A$ instead of $\lambda\cdot\id_X-A$ and we put $D(\lambda-A)=D(A)$. \red{The symbol \textquotedblleft{}$\oplus$\textquotedblright{} denotes a direct sum, which needs not necessarily to be orthogonal. If it is orthogonal, we will state this explicitly.}

\smallskip

\begin{dfn}\label{dissipative} Let $A$ be a densely defined operator.
\begin{compactitem}\vspace{3pt}
\item[(i)] The operator $A$ is \emph{dissipative}, if $\Re\langle{}Ax,x\rangle{}\leqslant0$ holds for all $x\in D(A)$.\vspace{3pt}
\item[(ii)] The operator $A$ is \emph{maximal dissipative}, if $A$ is dissipative and no proper dissipative extensions of $A$ exists, i.e., \red{if for every dissipative operator $B$,  $A\subseteq B$ implies $A=B$.}
\end{compactitem}
\end{dfn}

\smallskip

Below we summarize basic properties of dissipative and maximal dissipative operators. \red{In Lemma \ref{LEM}(vi) we consider adjoints of operators.} We advise the reader to recall, if necessary, the definition of $A^{\star}\colon D(A^{\star})\rightarrow X$ for a given densely defined operator $A\colon D(A)\rightarrow X$ and to review its basic properties, in particular the fundamental relation $\langle{}Ax,y\rangle{}=\langle{}x,A^{\star} y\rangle{}$ valid for $x\in D(A)$ and $y\in D(A^{\star})$, see \cite[Section X, \S 1]{C}.

\smallskip

\begin{lem}\label{LEM} Let $A$ be a densely defined operator.
\begin{compactitem}\vspace{3pt}
\item[(i)] If $A$ is dissipative, then $A$ is closable and its closure is again dissipative.\vspace{3pt}
\item[(ii)] If $A$ is maximal dissipative, then $A$ is closed.\vspace{3pt}
\item[(iii)] If $A$ is dissipative, then there exists a maximal dissipative extension $B$ of $A$.\vspace{3pt}
\item[(iv)] $A$ is dissipative if and only if $\lambda\|x\|\leqslant\|(\lambda-A)x\|$ holds for every $x\in D(A)$ and every $\lambda>0$.\vspace{3pt}
\item[(v)] If $A$ is dissipative and closed, then $\ran(\lambda-A)\subseteq X$ is closed for every \red{$\lambda>0$}.\vspace{3pt}
\red{\item[(vi)] If $A$ is dissipative and closed, then $X=\ran(\lambda-A)\oplus\ker(\lambda-A^{\star})$ holds with an orthogonal sum for every $\lambda>0$.}
\end{compactitem}
\end{lem}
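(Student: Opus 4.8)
The plan is to obtain the decomposition from the standard orthogonal splitting of a Hilbert space along a closed subspace, using part (v) to guarantee closedness and a short adjoint computation to identify the complement. Fix $\lambda>0$ and set $M=\ran(\lambda-A)$. By Lemma \ref{LEM}(v), $M$ is a closed linear subspace of $X$, so the general Hilbert space theory (e.g.\ \cite[Section I]{C}) yields $X=M\oplus M^{\perp}$ with an orthogonal sum. Hence it suffices to prove that $M^{\perp}=\ker(\lambda-A^{\star})$.

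For this I would argue as follows. Let $y\in X$. Then $y\in M^{\perp}$ means $\langle(\lambda-A)x,y\rangle=0$ for all $x\in D(A)$, which, since $\lambda$ is real so that $\overline{\lambda}=\lambda$, is the same as
$$\langle Ax,y\rangle=\langle \lambda x,y\rangle=\langle x,\lambda y\rangle\quad\text{for all }x\in D(A).$$
Because $A$ is densely defined, the definition of the adjoint says precisely that the validity of $\langle Ax,y\rangle=\langle x,z\rangle$ for all $x\in D(A)$ is equivalent to $y\in D(A^{\star})$ together with $A^{\star}y=z$. Applying this with $z=\lambda y$, we see that $y\in M^{\perp}$ if and only if $y\in D(A^{\star})$ and $A^{\star}y=\lambda y$, i.e.\ if and only if $y\in\ker(\lambda-A^{\star})$. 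This proves $M^{\perp}=\ker(\lambda-A^{\star})$ and therefore $X=\ran(\lambda-A)\oplus\ker(\lambda-A^{\star})$ with an orthogonal sum.

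I do not expect any genuine obstacle here: the statement is essentially the identity $(\ran(\lambda-A))^{\perp}=\ker(\lambda-A^{\star})$ combined with the projection theorem. The only points demanding care are the bookkeeping with the conjugate-linearity of the inner product in the second slot (which is harmless because $\lambda>0$ is real) and the observation that closedness of $\ran(\lambda-A)$, supplied by (v), is exactly what upgrades the always-valid decomposition $X=\overline{\ran(\lambda-A)}\oplus\ker(\lambda-A^{\star})$ to the claimed one with $\ran(\lambda-A)$ itself.
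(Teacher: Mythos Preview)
Your argument for part (vi) is correct and matches the paper's proof: both use (v) to get that $\ran(\lambda-A)$ is closed, identify its orthogonal complement as $\ker(\lambda-A^{\star})$ via the definition of the adjoint (the paper phrases this as $(\lambda-A)^{\star}=\lambda-A^{\star}$ and cites \cite[Proposition X.1.13]{C}), and then invoke the projection theorem. Note, however, that your proposal addresses only (vi); parts (i)--(v) still need to be supplied.
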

\begin{proof} (i) Let $(x_n)_{n\in\mathbb{N}}\subseteq D(A)$ and $y\in X$ with $x_n\rightarrow0$ and $Ax_n\rightarrow y$ be given. For $x\in D(A)$ and $\lambda\in\mathbb{C}$ we have $\Re\langle{}A(x+\lambda{}x_n),x+\lambda{}x_n\rangle{}\leqslant0$ since $A$ is dissipative. Consequently, also for the limit $\Re\langle{}Ax+\lambda{}y,x\rangle{}=\Re\langle{}Ax,x\rangle{}+\Re\lambda{}\langle{}y,x\rangle{}\leqslant0$ holds. We get that $\Re\lambda{}\langle{}y,x\rangle{}\leqslant|\Re\langle{}Ax,x\rangle{}|$ holds for every $\lambda\in\mathbb{C}$ and every $x\in D(A)$. For fixed $y$ and $x$ the last estimate can only be true for all $\lambda$ if $\langle{}y,x\rangle{}=0$. But since $D(A)\subseteq X$ is dense the latter in turn can only be true if $y=0$. This shows that $A$ is closable, see \cite[comments after X.1.3]{C}. The closedness, together with the continuity of $\Re\langle\cdot,\cdot\rangle{}$, implies that the closure of $A$ is dissipative.

\smallskip

(ii) If $A$ is not closed then its closure is a proper dissipative extension in view of (i).

\smallskip

(iii) The existence of a maximal dissipative extension follows by using Zorn's lemma.

\smallskip

(iv) \textquotedblleft{}$\Rightarrow$\textquotedblright{} For $\lambda>0$ and $x\in D(A)$ we use the dissipativity to get
$$
2\lambda\|x\|^2 \leqslant  2\lambda\langle{}x,x\rangle{}-(\langle{}Ax,x\rangle{}+\langle{}x,Ax\rangle{}) 
= \langle{}(\lambda-A)x,x\rangle{}+\langle{} x,(\lambda-A)x\rangle{}\leqslant2\|x\|\|(\lambda-A)x\|
$$
and obtain $\lambda\|x\|\leqslant\|(\lambda-A)x\|$ as desired.

\smallskip

\textquotedblleft{}$\Leftarrow$\textquotedblright{} Let $x\in D(A)$ and $\lambda>0$ be given. By $\lambda\|x\|\leqslant\|(\lambda-A)x\|$ we have
$$
\|\lambda{}x\|^2\leqslant\|(\lambda-A)x\|^2 = \|\lambda{}x\|^2+\|Ax\|^2-2\Re\langle{}Ax,\lambda{}x\rangle{}
$$
which implies $2\lambda{}\Re\langle{}Ax,x\rangle{}\leqslant\|Ax\|^2$ and thus $\Re\langle{}Ax,x\rangle{}\leqslant\frac{1}{2\lambda}\|Ax\|^2$. Since $\lambda>0$ was arbitrary, we \red{proved that} $\Re\langle{}Ax,x\rangle{}\leqslant0$.

\smallskip

(v) Fix $\lambda>0$ and let $(x_n)_{n\in\mathbb{N}}\subseteq D(A)$ with $(\lambda-A)x_n\rightarrow y\in X$ be given. We claim that $y$ is in the range of $\lambda-A$. By (iv) we get $\lambda\|x_n-x_m\|\leqslant\|(\lambda-A)x_n-(\lambda-A)x_m\|$ for arbitrary $n,\,m\in\mathbb{N}$ and conclude that $(x_n)_{n\in\mathbb{N}}$ is a Cauchy sequence and thus converges to some $x\in X$. We have $x_n\rightarrow x$ and $(\lambda-A)x_n\rightarrow y$. Since $A$ and therefore also $\lambda-A$ is closed it \red{follows that} $(\lambda-A)x=y$ and we \red{showed that} $y\in\ran(\lambda-A)$.

\smallskip

(vi) Straightforward computations based on the definition of the adjoint show that $(\lambda-A)^{\star}=\lambda-A^{\star}$ holds. By \cite[Proposition X.1.13]{C} we thus have $\ker(\lambda-A^{\star})=\ran(\lambda-A)^{\perp}$. Since $\ran(\lambda-A)\subseteq X$ is closed in view of (v) we obtain the desired decomposition by standard Hilbert space theory, see \cite[Exercise 3 on p.~11]{C}.
\end{proof}

\smallskip

In what follows operators of the type defined below will always be our starting point.

\smallskip

\begin{dfn}\label{skew-symmetric} A densely defined operator $A$ is said to be \textit{skew-symmetric} if $A\subseteq-A^{\star}$ holds.
\end{dfn}

\smallskip

\red{If $A$ is skew-symmetric, then $A$ and $-A$ are dissipative.} In particular, the properties established for dissipative operators in Lemma \ref{LEM} are valid for skew-symmetric ones. From now on we stick to the notation $D(-A^{\star})$ for the domain of $-A^{\star}$, which is by definition equal to $D(A^{\star})$.

\smallskip

\begin{lem}\label{SIMPLE-DECOMP} Let $A$ be densely defined, closed and \red{skew-symmetric}. Let $\lambda>0$ and $\epsilon=\pm1$ be fixed. Then we have
$$
X=\ran(\lambda-\epsilon{}A)\oplus\ker(\lambda-\epsilon{}A^{\star})
$$
\red{where the sum is orthogonal}.
\end{lem}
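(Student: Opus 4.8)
The plan is to reduce the statement directly to Lemma \ref{LEM}(vi). Set $B:=\epsilon A$. Since $D(B)=D(A)$, the operator $B$ is densely defined. It is closed because multiplication by the nonzero scalar $\epsilon$ is a linear homeomorphism of $X\times X$ carrying $\gr(A)$ onto $\gr(B)=\{(x,\epsilon Ax):x\in D(A)\}$, so one graph is closed if and only if the other is. For dissipativity we invoke the remark preceding Lemma \ref{SIMPLE-DECOMP}: if $A$ is skew-symmetric then both $A$ and $-A$ are dissipative, hence $B=\epsilon A$ is dissipative for $\epsilon=1$ as well as for $\epsilon=-1$.

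Next I would record the identity $(\epsilon A)^{\star}=\epsilon A^{\star}$. This is immediate from the definition of the adjoint: for $y\in X$ the functional $x\mapsto\langle\epsilon Ax,y\rangle=\langle Ax,\epsilon y\rangle$ is bounded on $D(A)$ exactly when $x\mapsto\langle Ax,y\rangle$ is (here we use that $\epsilon$ is real, so $\bar\epsilon=\epsilon$), whence $D((\epsilon A)^{\star})=D(A^{\star})$ and $(\epsilon A)^{\star}y=\epsilon A^{\star}y$ for all $y$ in this common domain. Consequently $\lambda-\epsilon A^{\star}=\lambda-B^{\star}$ and $\lambda-\epsilon A=\lambda-B$ as operators.

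Now $B$ is densely defined, closed and dissipative, so Lemma \ref{LEM}(vi) applies to $B$ and yields, for every $\lambda>0$,
$$
X=\ran(\lambda-B)\oplus\ker(\lambda-B^{\star})
$$
with an orthogonal sum. Substituting $B=\epsilon A$ and $B^{\star}=\epsilon A^{\star}$ gives exactly $X=\ran(\lambda-\epsilon A)\oplus\ker(\lambda-\epsilon A^{\star})$, orthogonal, which is the claim.

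I do not expect a genuine obstacle here: the whole content is the observation that the hypotheses ``densely defined, closed, dissipative'' are stable under multiplication by $\epsilon=\pm1$ and that the adjoint transforms in the obvious way. The only point requiring a line of care is the identity $(\epsilon A)^{\star}=\epsilon A^{\star}$, and even that is routine once one notes $\epsilon$ is real.
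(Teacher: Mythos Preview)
Your proof is correct and follows essentially the same approach as the paper: both reduce to Lemma~\ref{LEM}(vi) applied to $\epsilon A$, using that $\epsilon A$ is dissipative (since $A$ is skew-symmetric) and that $(\epsilon A)^{\star}=\epsilon A^{\star}$. The only cosmetic difference is that the paper treats the cases $\epsilon=+1$ and $\epsilon=-1$ separately, whereas you handle them uniformly via $B=\epsilon A$.
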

\begin{proof} \red{If $\epsilon=+1$ then the statement is a specialization of Lemma \ref{LEM}(vi). If $\epsilon=-1$, we apply Lemma \ref{LEM}(vi) to $-A$, which is dissipative as $A$ is skew-symmetric, and observe that $(-A)^{\star}=-A^{\star}$ establishes the claim.}
\end{proof}

\smallskip

\begin{lem}\label{DECOMP-LEM} Let $A$ be a densely defined, closed and skew-symmetric operator. Then
$$
D(-A^{\star})=D(A)\oplus\ker(1-A^{\star})\oplus\ker(1+A^{\star})
$$
and $-A^{\star}x=Ax_1-x_2+x_3$ holds for $x=x_1+x_2+x_3\in D(-A^{\star})$ with $x_1\in D(A)$, $x_2\in\ker(1-A^{\star})$ and $x_3\in\ker(1+A^{\star})$. \red{Above, the direct sum is not necessarily orthogonal.}
\end{lem}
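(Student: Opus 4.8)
The plan is to derive the decomposition from the orthogonal decompositions already available in Lemma \ref{SIMPLE-DECOMP}, applied with $\lambda = 1$ and both signs $\epsilon = \pm 1$. First I would observe that $-A^\star$ is a (typically non-closed-in-itself but well-defined) extension of $A$, and that since $A \subseteq -A^\star$ and $A \subseteq A$ trivially, the three subspaces $D(A)$, $\ker(1 - A^\star)$, $\ker(1 + A^\star)$ are all contained in $D(-A^\star)$; hence their sum is contained in $D(-A^\star)$. The action formula $-A^\star x = A x_1 - x_2 + x_3$ is then forced: on $D(A)$ the operator $-A^\star$ agrees with $A$, on $\ker(1 - A^\star)$ we have $A^\star x_2 = x_2$ so $-A^\star x_2 = -x_2$, and on $\ker(1 + A^\star)$ we have $A^\star x_3 = -x_3$ so $-A^\star x_3 = x_3$; adding these gives the claimed formula, provided the sum is direct so that the decomposition $x = x_1 + x_2 + x_3$ is unambiguous.

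The substantive content is therefore (a) that the sum exhausts all of $D(-A^\star)$, and (b) that it is direct. For (a), let $x \in D(-A^\star)$. Using Lemma \ref{SIMPLE-DECOMP} with $\epsilon = +1$, write $x = u + v$ with $u \in \ran(1 - A)$ and $v \in \ker(1 - A^\star)$; say $u = (1 - A)w$ for some $w \in D(A)$. Then $x - v = (1-A)w$, and I would aim to show the remaining piece lies in $D(A) \oplus \ker(1 + A^\star)$. A clean way: consider $y := x - v - \tfrac12\big((1-A)w + \text{something}\big)$ — more precisely, I want to split $w$ further. Actually the slicker route is to apply Lemma \ref{SIMPLE-DECOMP} a second time with $\epsilon = -1$ to the element $x$, obtaining $x = u' + v'$ with $u' \in \ran(1 + A)$, $v' \in \ker(1 + A^\star)$; and then to reconcile the two representations. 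Concretely, from $x = (1-A)w + v = (1+A)w' + v'$ with $w, w' \in D(A)$, subtract to get $(1-A)w - (1+A)w' = v' - v \in \ker(1 - A^\star) + \ker(1 + A^\star)$. I would then project appropriately: pair with elements of $\ker(1 - A^\star)$ and $\ker(1+A^\star)$ and use that these are the orthogonal complements of $\ran(1-A)$ and $\ran(1+A)$ respectively (the very content of Lemma \ref{SIMPLE-DECOMP}) to pin down $v$ and $v'$, and show $w = w'$, whence $x = w + v + v'$ wait — this needs care because $w$ need not equal $w'$. The correct bookkeeping is: set $x_1 := \tfrac12(w + w')$-type combination so that $x - x_1 \in \ker(1-A^\star) \oplus \ker(1+A^\star)$; I expect that choosing $x_1$ to be the common "regular part" works once one checks $w - w' \in \ker(1-A)\oplus\ker(1+A) \subseteq D(A)$, which follows since $(1-A)w - (1+A)w'$ lies in the span of the two kernels and $A$ is skew-symmetric so those kernels are subspaces of $D(A)$.

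For (b), directness: suppose $x_1 + x_2 + x_3 = 0$ with $x_1 \in D(A)$, $x_2 \in \ker(1 - A^\star)$, $x_3 \in \ker(1 + A^\star)$. Apply $-A^\star$ to get $A x_1 - x_2 + x_3 = 0$. Adding and subtracting the original relation yields $2 x_1 + \text{(combinations)} = 0$; specifically, $x_1 + x_2 = -x_3 + \text{stuff}$ — cleaner: from $x_1 + x_2 + x_3 = 0$ and $A x_1 - x_2 + x_3 = 0$, subtract to get $(1 - A)x_1 + 2 x_2 = 0$, i.e. $(1-A)x_1 = -2x_2 \in \ran(1-A) \cap \ker(1-A^\star) = \{0\}$ by the orthogonality in Lemma \ref{SIMPLE-DECOMP} (with $\epsilon=+1$). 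Hence $x_2 = 0$ and $(1-A)x_1 = 0$; symmetrically, adding gives $(1+A)x_1 + 2x_3 = 0$, so $x_3 = 0$ and $(1+A)x_1 = 0$; then $x_1 = \tfrac12\big((1+A)x_1 + (1-A)x_1\big)\cdot$ — wait, $\tfrac12[(1-A)x_1 + (1+A)x_1] = x_1 = 0$. So all three vanish and the sum is direct. The main obstacle I anticipate is the exhaustion step (a): juggling the two applications of Lemma \ref{SIMPLE-DECOMP} so that a single element $x_1 \in D(A)$ simultaneously accounts for the "$\ran(1-A)$-part" and the "$\ran(1+A)$-part" of $x$; the key algebraic identity making this work is that for $w \in D(A)$, $w = \tfrac12(1-A)w + \tfrac12(1+A)w$ is not itself the point, but rather that the discrepancy between the two representations is absorbed into the two kernels, which are themselves subspaces of $D(A)$ — so everything can be re-collected. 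Non-orthogonality of the final sum is expected and unproblematic: $D(A)$, $\ker(1-A^\star)$, $\ker(1+A^\star)$ generally overlap in their angles, and the statement only claims algebraic directness.
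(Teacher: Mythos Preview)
Your directness argument (b) is correct and clean: subtracting and adding the relations $x_1+x_2+x_3=0$ and $Ax_1-x_2+x_3=0$ gives $(1-A)x_1=-2x_2\in\ran(1-A)\cap\ker(1-A^\star)=\{0\}$ and $(1+A)x_1=-2x_3\in\ran(1+A)\cap\ker(1+A^\star)=\{0\}$, whence $x_2=x_3=0$ and $x_1=\tfrac12\bigl[(1-A)x_1+(1+A)x_1\bigr]=0$. This is a harmless variant of the paper's computation.

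The exhaustion argument (a), however, has a genuine gap. Your plan is to decompose $x$ itself twice, as $x=(1-A)w+v=(1+A)w'+v'$, and then assert that $w-w'\in\ker(1-A)\oplus\ker(1+A)$. But for skew-symmetric $A$ both $\ker(1-A)$ and $\ker(1+A)$ are trivial (Lemma~\ref{LEM}(iv) applied to the dissipative operators $\pm A$), so your claim forces $w=w'$, and this is false in general. Concretely, take $A=d/d\xi$ on $L^2(0,1)$ with $D(A)=H^1_0(0,1)$ and let $x=e^{-\xi}\in\ker(1-A^\star)$. Then $v=x$ and $(1-A)w=0$, so $w=0$; on the other hand $v'$ is the (nonzero) orthogonal projection of $e^{-\xi}$ onto $\operatorname{span}(e^{\xi})$, and solving $(1+A)w'=x-v'$ with $w'(0)=w'(1)=0$ yields $w'\not\equiv 0$. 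The underlying problem is that decomposing $x$ as an element of $X$ via $X=\ran(1\mp A)\oplus\ker(1\mp A^\star)$ produces range pieces $(1\mp A)w$, $(1\pm A)w'$ that are not themselves elements of $D(A)$, so neither $w$ nor $w'$ nor any simple combination of them is the $D(A)$-component of $x$.

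The idea you are missing is to apply Lemma~\ref{SIMPLE-DECOMP} not to $x$ but to $(1+A^\star)x$. Writing $(1+A^\star)x=(1-A)x_0+y_0$ with $x_0\in D(A)$ and $y_0\in\ker(1-A^\star)$, one uses $(1+A^\star)|_{D(A)}=1-A$ and $(1+A^\star)|_{\ker(1-A^\star)}=2\cdot\id$ to verify directly that $(1+A^\star)\bigl(x-x_0-\tfrac{y_0}{2}\bigr)=0$. Hence $x=x_0+\tfrac{y_0}{2}+\bigl(x-x_0-\tfrac{y_0}{2}\bigr)$ exhibits $x$ in $D(A)+\ker(1-A^\star)+\ker(1+A^\star)$, and only a single application of Lemma~\ref{SIMPLE-DECOMP} is needed.
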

\begin{proof} All three summands are subspaces of $D(-A^{\star})$. We show that their sum is direct. Let $x_1\in D(A)$, $x_2\in\ker(1-A^{\star})$ and $x_3\in\ker(1+A^{\star})$ satisfy $x_1+x_2+x_3=0$. We compute $0=(1-A^{\star})(x_1+x_2+x_3)=(1+A)x_1\red{+0+(1+1)x_3}$. Here, $(1+A)x_1\in\ran(1+A)$ and $2x_{\red{3}}\in\ker(1+A^{\star})$. The latter two spaces are orthogonal. Therefore, $x_{\red{3}}=0$ and $(1+A)x_1=0$, i.e.,
$$
0=\langle{}(1+A)x_1,x_1\rangle{}=\langle{}x_1,x_1\rangle{}+\langle{}Ax_1,x_1\rangle{}.
$$
Since $A$ is skew-symmetric we have $\Re\langle{}Ax_1,x_1\rangle{}=0$. Thus, taking real parts on both sides of the last equation gives $\langle{}x_1,x_1\rangle{}=0$ and thus $x_1=0$. It \red{follows that} $x_{\red{2}}=0$ and we are done.

\smallskip

In order to show $D(-A^{\star})\subseteq{}D(A)+\ker(1-A^{\star})+\ker(1+A^{\star})$ we take $x\in D(-A^{\star})$ and decompose $(1+A^{\star})x\in X$ according to Lemma \ref{SIMPLE-DECOMP} with $\lambda=1$ and $\epsilon=1$, i.e., we take $x_0\in D(A)$ and $y_0\in\ker(1-A^{\star})$ such that $(1+A^{\star})x=(1-A)x_0+y_0$. Now we compute
\begin{eqnarray*}
(1+A^{\star})(x-x_0-{\textstyle\frac{y_0}{2}}) & = & (1-A)x_0 + y_0 - (1+A^{\star})x_0-(1+A^{\star}){\textstyle\frac{y_0}{2}}\\
& = & (1-A)x_0 +y_0 - (1-A^{\star})x_0 - (1+1){\textstyle\frac{y_0}{2}}\\
& = & 0,
\end{eqnarray*}
which shows $x-x_0-{\textstyle\frac{y_0}{2}}\in\ker(1+A^{\star})$. We put $x_1=-x_0$, $x_2=-{\textstyle\frac{y_0}{2}}$ and $x_3=x+x_0+{\textstyle\frac{y_0}{2}}$ and thus $x=x_1+x_2+x_3$ holds with $x_1\in D(A)$, $x_2\in\ker(1-A^{\star})$ and $x_3\in\ker(1+A^{\star})$.
\end{proof}

\smallskip

In \cite[Section X, \S 3]{C} and also in \cite[Chapter 13]{R} the reader can find a detailed study of the Cayley transform in the context of symmetric operators. Below we adjust its definition for our case of skew-symmetric and dissipative operators.

\smallskip

\begin{dfn}\label{DFN-CT} Let $A$ be an operator. The \textit{Cayley transform} $C_A$ of $A$ is defined via
$$
C_A=(1+A)(1-A)^{-1}\colon\ran(1-A)\rightarrow X
$$
provided that $1-A\colon D(A)\rightarrow X$ is injective.
\end{dfn}

\smallskip

Below we establish properties of $C_A$.

\smallskip

\begin{lem}\label{LEM-3} Let $A$ be densely defined and dissipative. Then the Cayley transform $C_A$ is well-defined with $\ran C_A=\ran(1+A)$. \red{The map $1+C_A\colon\ran(1-A)\rightarrow D(A)$ is bijective and we have $C_A=2(1-A)^{-1}-1$ on $\ran(1-A)$} and $A=-(1-C_A)(1+C_A)^{-1}$ on $D(A)$. The Cayley transform $C_A$ is contractive. If $A$ is skew-symmetric, then it is isometric.
\end{lem}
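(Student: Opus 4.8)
The plan is to derive every assertion from the single injectivity estimate supplied by Lemma~\ref{LEM}(iv), and then to reduce the remaining claims to elementary algebraic identities relating $C_A$ to the bounded operator $(1-A)^{-1}$.

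First I would settle well-definedness and the range. Applying Lemma~\ref{LEM}(iv) with $\lambda=1$ gives $\|x\|\leqslant\|(1-A)x\|$ for all $x\in D(A)$; in particular $1-A\colon D(A)\rightarrow X$ is injective, so $(1-A)^{-1}\colon\ran(1-A)\rightarrow D(A)$ exists and is a bounded bijection onto $D(A)$. Since $C_A=(1+A)\circ(1-A)^{-1}$, its range is $(1+A)(D(A))=\ran(1+A)$.

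Next I would establish the identity $C_A=2(1-A)^{-1}-1$ on $\ran(1-A)$ by writing a generic element as $y=(1-A)x$ with $x\in D(A)$: then $(1-A)^{-1}y=x$, $C_Ay=(1+A)x=x+Ax$, and $2x-y=2x-(x-Ax)=x+Ax$, so the two expressions agree. From this, bijectivity of $1+C_A\colon\ran(1-A)\rightarrow D(A)$ is immediate, since $1+C_A=2(1-A)^{-1}$ is a bijection onto $D(A)$ with inverse $\tfrac12(1-A)$. For the inversion formula I would take $x\in D(A)$, write $(1+C_A)^{-1}x=\tfrac12(1-A)x=(1-A)(\tfrac12 x)$, and compute $(1-C_A)\bigl((1-A)(\tfrac12 x)\bigr)=(1-A)(\tfrac12 x)-(1+A)(\tfrac12 x)=-Ax$, whence $-(1-C_A)(1+C_A)^{-1}x=Ax$.

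Finally, for contractivity and the isometry statement I would expand norms: if $y=(1-A)x$ with $x\in D(A)$, then $\|y\|^2-\|C_Ay\|^2=\|(1-A)x\|^2-\|(1+A)x\|^2=-4\Re\langle Ax,x\rangle$, which is nonnegative because $A$ is dissipative; hence $\|C_Ay\|\leqslant\|y\|$. If moreover $A$ is skew-symmetric, then $A$ and $-A$ are both dissipative, so $\Re\langle Ax,x\rangle=0$ for every $x\in D(A)$, the displayed difference vanishes, and $C_A$ is isometric. I do not expect a genuine obstacle here: once injectivity of $1-A$ is in hand, all claims are formal. The only point requiring care is the bookkeeping of domains and ranges---keeping $(1-A)^{-1}$ consistently viewed as a bijection from $\ran(1-A)$ onto $D(A)$, so that $C_A$, $1+C_A$ and $(1+C_A)^{-1}$ act between the correct spaces.
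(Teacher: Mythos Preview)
Your proof is correct and follows essentially the same approach as the paper's: both derive well-definedness from Lemma~\ref{LEM}(iv), establish the identity $C_A=2(1-A)^{-1}-1$ by the same substitution $y=(1-A)x$, read off bijectivity of $1+C_A$ from it, and obtain contractivity/isometry by expanding $\|(1\pm A)x\|^2$. Your computation of the inversion formula via $(1+C_A)^{-1}x=(1-A)(\tfrac12 x)$ is a slightly different but equivalent bookkeeping of the same identity the paper uses.
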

\begin{proof}\red{The operator $1-A\colon D(A)\rightarrow\ran(1-A)$ is bijective by Lemma \ref{LEM}(iv). Therefore $C_A$ is well-defined and $\ran C_A=\ran(1+A)$ holds.}

\smallskip

\red{For $y\in\ran(1-A)$ we select $x\in D(A)$ with $(1-A)x=y$ and compute
$$
(2(1-A)^{-1}-1)y = 2(1-A)^{-1}y-y = 2x-(1-A)x = (1+A)x = (1+A)(1-A)^{-1}y = C_Ay
$$
which shows that $C_A=2(1-A)^{-1}-1$ holds. It follows that $(1+C_A)/2=(1-A)^{-1}$ and we see that $1+C_A\colon\ran(1-A)\rightarrow D(A)$ is bijective. Now we obtain $2(1+C_A)^{-1}x=(1-A)x$ for each $x\in D(A)$ and thus
$$
Ax = x-2(1+C_A)^{-1}x = (1+C_A)(1+C_A)^{-1}x-2(1+C_A)^{-1}x = -(1-C_A)(1+C_A)^{-1}x
$$
for $x\in D(A)$.} Finally, we estimate
\begin{eqnarray*}
\|y+Ay\|^2 & = & \|y\|^2 + \|Ay\|^2+\langle{}Ay,y\rangle{}+\langle{}y,Ay\rangle{}\\
           & \leqslant & \|y\|^2+\|Ay\|^2-\langle{}Ay,y\rangle{}-\langle{}y,Ay\rangle{}\\
           & = & \|y-Ay\|^2.
\end{eqnarray*}
Since $C_Ax=y+Ay$ and $y-Ay=x$ are valid the above implies $\|C_Ax\|\leqslant\|x\|$. If $A$ is skew-symmetric then $\langle{}Ay,y\rangle{}+\langle{}y,Ay\rangle{}=0$ holds for $y\in D(A)$ and thus we obtain $\|C_Ax\|=\|x\|$.
\end{proof}

\smallskip

\begin{prop}\label{PROP-1} Let $A$ be densely defined, closed and skew-symmetric, i.e., $A\subseteq-A^{\star}$. Let $B$ be a dissipative extension of $A$. Then $B\subseteq-A^{\star}$ holds.
\end{prop}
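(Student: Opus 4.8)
The plan is to transport the problem to the Cayley transforms and to play off the fact that $C_A$ is isometric against the fact that $C_B$ is only contractive. First I record the set-up: since $A\subseteq B$ and $A$ is densely defined, $B$ is densely defined as well, and by Lemma \ref{LEM}(iv) both $1-A$ and $1-B$ are injective; hence, by Lemma \ref{LEM-3}, the Cayley transforms $C_A$ and $C_B$ are well-defined, $C_B$ is contractive, and $C_A$ is isometric. Moreover $A\subseteq B$ forces $\ran(1-A)\subseteq\ran(1-B)$ with $C_B=C_A$ on $\ran(1-A)$: for $x\in D(A)$ one has $(1-B)x=(1-A)x$ and $(1+B)x=(1+A)x$, so if $y=(1-A)x$, then $(1-B)^{-1}y=x$ and $C_By=(1+B)x=(1+A)x=C_Ay$.

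Second, I reduce the claim to a scalar identity. By the definition of the adjoint, $B\subseteq-A^{\star}$ means precisely that every $x\in D(B)$ lies in $D(A^{\star})=D(-A^{\star})$ with $A^{\star}x=-Bx$; this is equivalent to
$$
\langle Au,x\rangle+\langle u,Bx\rangle=0\quad\text{for all }u\in D(A),\ x\in D(B),
$$
which, since $Au=Bu$, is the same as $\langle Bu,x\rangle+\langle u,Bx\rangle=0$. Using the identities $C_A=2(1-A)^{-1}-1$ and $C_B=2(1-B)^{-1}-1$ from Lemma \ref{LEM-3}, I put $v=(1-A)u\in\ran(1-A)$ and $y=(1-B)x\in\ran(1-B)$, so that $2u=v+C_Bv$, $2Bu=C_Bv-v$, $2x=y+C_By$ and $2Bx=C_By-y$. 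Substituting these four expressions and expanding, the cross terms cancel and one is left with
$$
\langle Bu,x\rangle+\langle u,Bx\rangle=\tfrac12\bigl(\langle C_Bv,C_By\rangle-\langle v,y\rangle\bigr).
$$
So everything comes down to showing $\langle C_Bv,C_By\rangle=\langle v,y\rangle$ whenever $v\in\ran(1-A)$ and $y\in\ran(1-B)$.

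This inner-product statement is the only non-formal step and the place where I expect the real work. I would argue with the sesquilinear form $\beta(a,b)=\langle a,b\rangle-\langle C_Ba,C_Bb\rangle$ on $\ran(1-B)$: it is Hermitian and, because $C_B$ is contractive, positive semidefinite, $\beta(a,a)=\|a\|^2-\|C_Ba\|^2\geqslant0$. On $\ran(1-A)$ we have $C_B=C_A$ isometric, hence $\beta(v,v)=0$ there. The Cauchy--Schwarz inequality for positive semidefinite Hermitian forms then yields $|\beta(v,y)|^2\leqslant\beta(v,v)\,\beta(y,y)=0$, i.e. $\beta(v,y)=0$, which is exactly the required identity. (Equivalently, $1-C_B^{\star}C_B\geqslant0$ vanishes on $\ran(1-A)$ in the quadratic-form sense, hence $C_B^{\star}C_Bv=v$ there; I prefer the form version since it needs nothing beyond the elementary toolkit and avoids taking adjoints of an operator defined on a possibly non-closed subspace.)

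Feeding this back into the displayed identity gives $\langle Au,x\rangle=-\langle u,Bx\rangle$ for every $u\in D(A)$ and $x\in D(B)$. By the definition of the adjoint this says $x\in D(A^{\star})=D(-A^{\star})$ with $A^{\star}x=-Bx$, i.e. $-A^{\star}x=Bx$. Since $x\in D(B)$ was arbitrary, $B\subseteq-A^{\star}$.
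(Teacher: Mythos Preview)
Your proof is correct and, while it shares the opening move with the paper (pass to Cayley transforms, exploit that $C_A$ is isometric and $C_B$ merely contractive), it takes a genuinely different route from there. The paper works structurally: it uses the orthogonal splittings $X=\ran(1-A)\oplus\ker(1-A^{\star})$ and $X=\ran(1+A)\oplus\ker(1+A^{\star})$ from Lemma~\ref{SIMPLE-DECOMP}, shows via a $\lambda$-argument that $C_B$ maps $\ran(1-B)\cap\ker(1-A^{\star})$ into $\ker(1+A^{\star})$, and then invokes the decomposition $D(-A^{\star})=D(A)\oplus\ker(1-A^{\star})\oplus\ker(1+A^{\star})$ of Lemma~\ref{DECOMP-LEM} to place each $x\in D(B)$ explicitly inside $D(-A^{\star})$ and verify $Bx=-A^{\star}x$ componentwise. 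You instead verify the defining identity of the adjoint directly: reduce to $\langle C_Bv,C_By\rangle=\langle v,y\rangle$ for $v\in\ran(1-A)$, $y\in\ran(1-B)$, and obtain this from Cauchy--Schwarz for the positive semidefinite Hermitian form $\beta(a,b)=\langle a,b\rangle-\langle C_Ba,C_Bb\rangle$, which vanishes on the diagonal of $\ran(1-A)$ because $C_A$ is isometric. Your Cauchy--Schwarz step is really the same mechanism as the paper's $\lambda$-trick, only packaged more cleanly; what you gain is that you bypass Lemma~\ref{DECOMP-LEM} entirely (and with it the explicit use of the closedness of $A$), making the argument shorter and more self-contained. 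What the paper's approach buys is the concrete picture of how $D(B)$ sits inside the von~Neumann-type decomposition of $D(-A^{\star})$, which is reused later in the article.
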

\begin{proof} By Lemma \ref{LEM-3} the Cayley transforms
\begin{equation*}
C_A=(1+A)(1-A)^{-1}\colon\ran(1-A)\rightarrow X \; \text{ and } \; C_B=(1+B)(1-B)^{-1}\colon\ran(1-B)\rightarrow X
\end{equation*}
are both well-defined, $C_B$ is contractive and $C_A$ is isometric. By construction we have $C_A\subseteq C_B$ and from Lemma \ref{SIMPLE-DECOMP} with $\lambda=1$ we obtain
$$
X=\ran(1-A)\oplus\ker(1-A^{\star}) \; \text{ and } \; X=\ran(1+A)\oplus\ker(1+A^{\star})
$$
\red{with orthogonal sums. We claim that there exists $C\colon \ker(1-A^{\star})\cap\ran(1-B)\rightarrow\ker(1+A^{\star})$ such that
$$
C_Bx=C_Ax_1+Cx_2
$$
holds for every $x\in\ran(1-B)$ where $x=x_1+x_2$ and $x_1\in\ran(1-A)$, $x_2\in\ker(1-A^{\star})$. We observe that $C_Bx=C_Bx_1+C_Bx_2=C_Ax_1+C_Bx_2$ holds for $x_1$ and $x_2$ as above.} Therefore it is enough to show that $C_B(\ran(1-B)\cap\ker(1-A^{\star}))\subseteq\ker(1+A^{\star})$ holds. Let $z\in\ran(1-B)\cap\ker(1-A^{\star})$, i.e., $z\in\ran(1-B)$ with $z\perp{}\ran(1-A)$. Let $y\in\ran(1-A)$ and $\lambda\in\mathbb{C}$. We have
$$
\|\lambda{}C_By\|^2+\|C_Bz\|^2+2\Re\langle{}\lambda{}C_By,C_Bz\rangle{}=\|C_B(\lambda{}y+z)\|^2\leqslant \|\lambda{}y+z\|^2 = \|\lambda{}y\|^2+\|z\|^2
$$
where the estimate holds since $C_B$ is contractive and the last equality holds since $z\perp{}y$ is valid by our assumptions. \red{From Lemma \ref{LEM-3} we know that $\|C_By\|=\|C_Ay\|=\|y\|$ holds. Hence, the above inequality leads to
$$
2\Re\lambda{}\langle{}C_By,C_Bz\rangle{} \leqslant \|\lambda{}y\|^2+\|z\|^2-\|\lambda{}C_By\|^2-\|C_Bz\|^2 = \|z\|^2-\|C_Bz\|^2
$$
which can only be true for every $\lambda\in\mathbb{C}$, if $\langle{}C_By,C_Bz\rangle{}=0$ holds. Hence $C_Bz\perp{}C_By$ holds for every $y\in\ran(1-A)$. As $C_B(\ran(1-A))=C_A(\ran(1-A))=\ran C_A = \ran(1+A)$ holds by Lemma \ref{LEM-3} we get $C_Bz\perp{}\ran(1+A)$ which implies $C_Bz\in\ker(1+A^{\star})$ in view of Lemma \ref{SIMPLE-DECOMP}. Now we put
$$
C:=C_B|_{\ker(1-A^{\star})\cap\ran(1-B)}
$$ 
which establishes the claim.}

\smallskip

Now we show that $B\subseteq{}-A^{\star}$ holds. Let $x\in D(B)$ and put $y=(1-B)x$. According to the latter two paragraphs we can decompose $y=y_1+y_2$ with $y_1\in\ran(1-A)$ and $y_2\in\ker(1-A^{\star})$ such that $C_By=C_Ay_1+Cy_2$ where $C_Ay_1\in\ran(1+A)$ and $Cy_2\in\ker(1+A^{\star})$ holds. \red{Using the formula $C_B=2(1-B)^{-1}-1$, see Lemma \ref{LEM-3}, we get
$$
x = (1-B)^{-1}y = {\textstyle\frac{1}{2}}(1+C_B)y = {\textstyle\frac{1}{2}}(y_1+y_2+C_Ay_1+Cy_2)={\textstyle\frac{1}{2}}(1+C_A)y_1+{\textstyle\frac{y_2}{2}}+{\textstyle\frac{Cy_2}{2}}
$$
where, also by Lemma \ref{LEM-3}, $(1+C_A)y_1\in D(A)$ holds.} Consequently,
$$
x\in D(A)\oplus{}\ker(1-A^{\star})\oplus{}\ker(1+A^{\star})=D(-A^{\star})
$$
where the equality follows from Lemma \ref{DECOMP-LEM}. This establishes $D(B)\subseteq D(-A^{\star})$. Applying Lemma \ref{DECOMP-LEM} \red{and Lemma \ref{LEM-3}} again, we obtain for $x\in D(B)$ as above
$$
-A^{\star}x={\textstyle\frac{1}{2}}A(1+C_A)y_1-{\textstyle\frac{y_2}{2}}+{\textstyle\frac{Cy_2}{2}}={\textstyle\frac{y_1}{2}}+{\textstyle\frac{C_Ay_1}{2}}-{\textstyle\frac{y_2}{2}}+{\textstyle\frac{Cy_2}{2}}.
$$
From $C_By=(1+B)x=x+Bx$ and $x={\textstyle\frac{1}{2}}(y+C_By)$ we get on the other hand
$$
Bx = C_By-x = C_By-{\textstyle\frac{1}{2}}(y+C_By) = {\textstyle\frac{1}{2}}(C_By-y) = {\textstyle\frac{C_Ay_1}{2}}+{\textstyle\frac{Cy_2}{2}}-{\textstyle\frac{y_1}{2}}-{\textstyle\frac{y_2}{2}}
$$
which shows $Bx=-A^{\star}x$ and finishes the proof.
\end{proof}

\bigskip


\section{The Lumer-Phillips Theorem}\label{SEC-3}

\smallskip

In this section we give a proof of a version of the Lumer-Phillips theorem which fits precisely to the situation where we apply it in Section \ref{SEC-4} and to our terminology of maximal dissipativity. Those readers familiar with one or another version of the Lumer-Phillips theorem are advised to consult the corresponding paragraph in Section \ref{SEC-7}.

\smallskip

Here, our aim is to give a proof which only relies on the theory of semigroups developed in \cite[pp.~375]{R}.

\begin{thm}\label{Lumer-Phillips} Let $X$ be a Hilbert space and let $A$ be densely defined. Then $A$ generates a $\Cnull$-semigroup of contractions on $X$ if and only if $A$ is maximal dissipative.
\end{thm}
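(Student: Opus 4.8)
The plan is to prove both implications using the Hille-Yosida theorem as the only non-elementary input, bridging to it via the characterizations of (maximal) dissipativity from Lemma \ref{LEM}.

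\textbf{The easy direction.} Suppose $A$ generates a $\Cnull$-semigroup of contractions $\T$. For $x\in D(A)$ one has $\tfrac{d}{dt}\|T(t)x\|^2|_{t=0^+}=2\Re\langle Ax,x\rangle$, and since $\|T(t)x\|\leqslant\|x\|$ this derivative is $\leqslant0$, so $A$ is dissipative. For maximality, suppose $B\supseteq A$ is dissipative with $x\in D(B)$. By Hille-Yosida, $1\in\rho(A)$, so $1-A\colon D(A)\rightarrow X$ is bijective; pick $z\in D(A)$ with $(1-A)z=(1-B)x$. Then $w=x-z\in D(B)$ satisfies $(1-B)w=0$, i.e.\ $Bw=w$, whence $\Re\langle Bw,w\rangle=\|w\|^2\geqslant0$; dissipativity of $B$ forces $w=0$, so $x=z\in D(A)$ and $Bx=Ax$. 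Hence $A=B$ and $A$ is maximal dissipative.

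\textbf{The hard direction.} Suppose $A$ is maximal dissipative; we verify the Hille-Yosida hypotheses for the contraction case: $A$ closed, densely defined, $(0,\infty)\subseteq\rho(A)$, and $\|(\lambda-A)^{-1}\|\leqslant1/\lambda$ for $\lambda>0$. Closedness is Lemma \ref{LEM}(ii). Lemma \ref{LEM}(iv) gives injectivity of $\lambda-A$ together with the norm bound $\|(\lambda-A)^{-1}\|\leqslant1/\lambda$ on its range, and Lemma \ref{LEM}(v) gives that $\ran(\lambda-A)$ is closed. The crux is surjectivity: I claim $\ran(\lambda-A)=X$ for every $\lambda>0$. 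First I treat $\lambda=1$. Since $\ran(1-A)$ is a closed proper subspace if surjectivity fails, pick $0\neq v\perp\ran(1-A)$; the idea is to enlarge $D(A)$ by $\lin\{v\}$ and define an extension that is still dissipative, contradicting maximality. Concretely, for $x\in D(A)$ and $\mu\in\mathbb{C}$ set $B(x+\mu v)=Ax-\mu v$; one checks $B$ is well-defined (as $v\notin D(A)$, else $\langle(1-A)v,v\rangle=0$ forces $v=0$ by skew-type computation—here instead use $0=\langle(1-A)x,v\rangle$ for all $x$, so in particular if $v\in D(A)$ then $\langle v,v\rangle=\langle Av,v\rangle$, and $\Re\langle Av,v\rangle\leqslant0$ gives $\|v\|^2\leqslant0$). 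Then
$$
\Re\langle B(x+\mu v),x+\mu v\rangle=\Re\langle Ax,x\rangle-|\mu|^2\|v\|^2+\Re\big(\langle Ax,\mu v\rangle-\langle\mu v,x\rangle\big),
$$
and since $v\perp\ran(1-A)$ we have $\langle x,v\rangle=\langle Ax,v\rangle$ for all $x\in D(A)$, so $\langle Ax,\mu v\rangle-\langle\mu v,x\rangle=\bar\mu\langle Ax,v\rangle-\mu\langle v,x\rangle=\bar\mu\langle x,v\rangle-\mu\overline{\langle x,v\rangle}=2i\,\Im(\bar\mu\langle x,v\rangle)$, which is purely imaginary; hence $\Re\langle B(x+\mu v),x+\mu v\rangle=\Re\langle Ax,x\rangle-|\mu|^2\|v\|^2\leqslant0$. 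So $B$ is a proper dissipative extension of $A$, contradicting maximality. Therefore $1\in\rho(A)$.

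\textbf{From $\lambda=1$ to all $\lambda>0$.} Finally I extend $\rho(A)\ni1$ to all of $(0,\infty)$ by the standard Neumann-series continuation argument: if $\lambda_0>0$ lies in $\rho(A)$ then $(\lambda-A)=(I+(\lambda-\lambda_0)(\lambda_0-A)^{-1})(\lambda_0-A)$ is invertible for $|\lambda-\lambda_0|<\|(\lambda_0-A)^{-1}\|^{-1}$, and by Lemma \ref{LEM}(iv) $\|(\lambda_0-A)^{-1}\|^{-1}\geqslant\lambda_0$, so the resolvent set contains $(\lambda_0/2,\,\cdot\,)$-neighbourhoods large enough that iterating from $\lambda_0=1$ reaches every positive $\lambda$ in finitely many steps (or, more slickly, the set $\{\lambda>0:\lambda\in\rho(A)\}$ is open in $(0,\infty)$ by the above and closed because $\ran(\lambda-A)$ is always closed with the uniform bound $1/\lambda$ preventing the range from degenerating, hence equals $(0,\infty)$ by connectedness). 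With $(0,\infty)\subseteq\rho(A)$ and $\|(\lambda-A)^{-1}\|\leqslant1/\lambda$ established, Hille-Yosida yields that $A$ generates a $\Cnull$-semigroup of contractions. I expect the genuinely delicate point to be the well-definedness and dissipativity check for the rank-one extension $B$ in the surjectivity argument—making precise why the cross terms vanish and why $v\notin D(A)$—while the passage from $\lambda=1$ to general $\lambda>0$ is routine once the uniform resolvent bound is in hand.
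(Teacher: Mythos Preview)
Your proof is correct and follows essentially the same strategy as the paper's: in the hard direction both arguments contradict maximality by constructing a proper dissipative extension using vectors orthogonal to $\ran(\lambda-A)$, and the cross-term cancellation you verify is exactly the paper's computation (your single vector $v$ lies in $\ker(1-A^\star)$, which is the subspace the paper uses). The only cosmetic difference is that the paper extends by all of $\ker(\lambda-A^\star)$ and runs the argument directly for each $\lambda>0$, so it never needs your Neumann-series continuation from $\lambda=1$; you could streamline by replacing $1$ with a generic $\lambda>0$ throughout and dropping that final paragraph.
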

\begin{proof} \textquotedblleft{}$\Rightarrow$\textquotedblright{} Let $A$ generate a $\Cnull$-semigroup of contractions. Then by \cite[13.35(f)]{R} the map $\lambda-A$ has an inverse in $L(X)$  for every $\lambda>0$ and the estimate $\|(\lambda-A)^{-1}y\|\leqslant{}{\textstyle\frac{1}{\lambda}}\|y\|$ holds for every $\lambda>0$ and every $y\in X$. By Lemma \ref{LEM}(iv) it follows that $A$ is dissipative. In addition it follows that $\lambda-A$ is surjective. Since by Lemma \ref{LEM}(iv) every dissipative operator is in particular injective, $A$ cannot have a proper dissipative extension and thus is maximal dissipative.

\smallskip

\textquotedblleft{}$\Leftarrow$\textquotedblright{} Let $A$ be maximal dissipative. We use the Hille-Yosida theorem \cite[13.37]{R}. Let $\lambda>0$ be given. By Lemma \ref{LEM}(iv), $\lambda-A\colon D(A)\rightarrow X$ is injective. By Lemma \ref{LEM}(ii) and Lemma \ref{LEM}(v) it follows that $\ran(\lambda-A)\subseteq X$ is closed. Assume that the latter is a proper inclusion. Then we can write $X=\ran(\lambda-A)\oplus\ker(\lambda-A^{\star})$ with an orthogonal direct sum, cf.~Lemma \ref{SIMPLE-DECOMP}. We define
$$
B\colon D(B)\rightarrow X \; \text{ with } \; D(B)=D(A)+\ker(\lambda-A^{\star}) \; \text{ and } \; B(x_1+x_2)=Ax_1-\lambda{}x_2.
$$
The operator $B$ is well-defined: If $x_1+x_2=0$ then $x_2=-x_1\in D(A)$ and $Ax_1=-Ax_2$ together with $A^{\star}x_2=\lambda{}x_2$ implies $\langle{}Ax_1,x_1\rangle{} = \langle{}Ax_2,x_2\rangle{} = \langle{}x_2,A^{\star}x_2\rangle{}=\langle{}x_2,\lambda{}x_2\rangle{} = \lambda{}\langle{}x_2,x_2\rangle{}\geqslant0$. On the other hand $\lambda{}\langle{}x_2,x_2\rangle{}=\Re\langle{}Ax_2,x_2\rangle{}\leqslant0$. Therefore, $x_2=0$ and then $x_1=0$ follows. For $x_1\in D(A)$ and $x_2\in\ker(\lambda-A^{\star})$ we compute $\langle{}Ax_1,x_2\rangle{} = \langle{}x_1,A^{\star}x_2\rangle{} = \langle{}x_1,\lambda{}x_2\rangle{}=\lambda{}\langle{}x_1,x_2\rangle{}$ and obtain
\begin{eqnarray*}
\langle{}B(x_1+x_2),x_1+x_2\rangle{} & = & \langle{}Ax_1,x_1\rangle{} + \langle{}Ax_1,x_2\rangle{} + \langle{}-\lambda{}x_2,x_1\rangle{} + \langle{}-\lambda{}x_2,x_2\rangle{} \\
& = & \langle{}Ax_1,x_1\rangle{} + \lambda\langle{}x_1,x_2\rangle{} - \lambda{}\langle{}x_2,x_1\rangle{} - \lambda{}\langle{}x_2,x_2\rangle{}\\
& = & \langle{}Ax_1,x_1\rangle{} + i\lambda\Im\langle{}x_1,x_2\rangle{}   -\lambda{}\|x_2\|^2
\end{eqnarray*}
which implies $\Re\langle{}B(x_1+x_2),x_1+x_2\rangle{}=\Re\langle{}Ax_1,x_1\rangle{}-\lambda\|x_2\|^2\leqslant0$. Thus, $B$ is dissipative. By construction $B$ is a proper extension of $A$ which contradicts the maximal dissipativity of $A$ and thus establishes that $\lambda-A\colon D(A)\rightarrow X$ is also surjective. But then the estimate which we obtained above from Lemma \ref{LEM}(iv) provides $\|(\lambda-A)^{-1}y\|\leqslant \frac{1}{\lambda}\|y\|$ for every $y\in X$. Finally, for every $n\in\mathbb{N}$ we have $\|(\lambda-A)^{-m}\|\leqslant\|(\lambda-A)^{-1}\|^m \leqslant \lambda^{-m}$ which establishes the condition in the Hille-Yosida Theorem \cite[13.37]{R} with $C=1$ and $\gamma=0$. The proof of the latter theorem, cf.~\cite[p.~382, first paragraph]{R} shows that the in this case the operators in the semigroup are all contractions.
\end{proof}

\bigskip


\section{Boundary Triplets and Generation}\label{SEC-4}

\smallskip

This section is the heart of the current article. Below we give the parametrization of generators via contractions defined on an auxiliary Hilbert space employing a boundary triplet. We mainly follow the \red{approach} of Gorbachuk, Gorbachuk \cite{GG} and Schm\"udgen \cite{S} but start with a skew-symmetric operator and completely avoid the use of relations.

\smallskip

\begin{dfn}\label{dfn-BT} Let $X$ be a Hilbert space and $A$ be a densely defined and skew-symmetric operator in $X$. A \textit{boundary triplet} for $A$ is a triple $(H,\Gamma_1,\Gamma_2)$ consisting of a Hilbert space $H$ and linear and maps $\Gamma_1$, $\Gamma_2\colon D(-A^{\star})\rightarrow H$ such that the two conditions\vspace{2.5pt}
{\flushleft$\begin{array}{r@{}l}\vspace{3.5pt}
\text{(BT-1)}\;\;\;\;&{}\forall\;x,\,y\in D(-A^\star)\colon \langle{}A^{\star}x,y\rangle_X+\langle{}x,A^{\star}y\rangle_X=\langle{}\Gamma_1x,\Gamma_2y\rangle_H+\langle{}\Gamma_2x,\Gamma_1y\rangle_H\\\vspace{3pt}
\text{(BT-2)}\;\;\;\;&{}\forall\:y_1,\,y_2\in H \;\exists\:x\in D(-A^{\star})\colon \Gamma_1x=y_1 \text{ and } \Gamma_2x=y_2
\end{array}$}\vspace{2.5pt}

are satisfied.
\end{dfn}

\smallskip

We note that in the literature the above triple---or its symmetric counterpart---is usually referred to as a boundary triplet \textquotedblleft{}for $A^{\star}$\textquotedblright{}. The condition (BT-1) is often said to be an \textquotedblleft{}abstract Green\red{'s} identity\textquotedblright{}.

\smallskip

Now we state the main result of this section.

\smallskip

\begin{thm}\label{MAIN} Let $A$ be a densely defined, closed, skew-symmetric operator in a Hilbert space $X$ and let $(H,\Gamma_1,\Gamma_2)$ be a boundary triplet for $A$. Let $B$ be an extension of $A$. Then $B$ generates a $\Cnull$-semigroup of contractions on $X$ if and only if there exists a uniquely determined $K\in L(H)$ with $\|K\|_{L(H)}\leqslant1$ such that $D(B)=\{x\in D(-A^{\star})\:;\:(K-1)\Gamma_1x-(K+1)\Gamma_2x=0\}$ and $B=-A^{\star}|_{D(B)}$.
\end{thm}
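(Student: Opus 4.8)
The plan is to reduce the statement to the Lumer--Phillips theorem (Theorem~\ref{Lumer-Phillips}) by showing that the operator $B=-A^{\star}|_{D(B)}$ with $D(B)$ as above is maximal dissipative, and conversely that every maximal dissipative extension of $A$ arises this way. The bridge is Proposition~\ref{PROP-1}: since every dissipative extension $B$ of $A$ satisfies $B\subseteq-A^{\star}$, the only freedom in choosing $B$ lies in choosing the domain $D(B)$ between $D(A)$ and $D(-A^{\star})$, and by (BT-2) such domains correspond to ``boundary conditions'' cut out by $\Gamma_1,\Gamma_2$. So the real content is a dictionary between dissipativity/maximality of $B$ and a condition on the subspace $\Gamma(D(B))\subseteq H\times H$, which I will then express via a contraction $K$.

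First I would record the elementary computation: for $x\in D(-A^{\star})$, (BT-1) with $y=x$ gives $2\Re\langle-A^{\star}x,x\rangle_X=-\langle\Gamma_1 x,\Gamma_2 x\rangle_H-\langle\Gamma_2 x,\Gamma_1 x\rangle_H=-2\Re\langle\Gamma_1 x,\Gamma_2 x\rangle_H$, hence $B x:=-A^{\star}x$ satisfies $\Re\langle Bx,x\rangle_X\leqslant0$ if and only if $\Re\langle\Gamma_1 x,\Gamma_2 x\rangle_H\geqslant0$. Next, the key algebraic identity: for $u,v\in H$ one has $\|u+v\|^2-\|u-v\|^2=4\Re\langle u,v\rangle$, so with $u=\Gamma_1 x$, $v=\Gamma_2 x$ the dissipativity of $Bx$ is equivalent to $\|\Gamma_1 x-\Gamma_2 x\|_H\leqslant\|\Gamma_1 x+\Gamma_2 x\|_H$. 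Now define, for a candidate domain, the ``Cayley-type'' variables $\xi=\Gamma_1 x+\Gamma_2 x$ and $\eta=\Gamma_1 x-\Gamma_2 x$; then $\Gamma_1 x=\tfrac12(\xi+\eta)$, $\Gamma_2 x=\tfrac12(\xi-\eta)$, and the boundary condition $(K-1)\Gamma_1 x-(K+1)\Gamma_2 x=0$ rearranges to $K\xi=\eta$, i.e. $K(\Gamma_1 x+\Gamma_2 x)=\Gamma_1 x-\Gamma_2 x$. Using (BT-2), as $x$ ranges over $D(-A^{\star})$ the pair $(\xi,\eta)$ ranges over all of $H\times H$; so a subspace $D\subseteq D(-A^{\star})$ of the form $\{x:\Gamma(x)\in L\}$ with $L\subseteq H\times H$ corresponds in $(\xi,\eta)$-coordinates to the graph of a relation, and it is the graph of a \emph{bounded operator} $K$ with $\|K\|\leqslant1$ precisely when $L$ is ``everywhere defined'' and ``contractive''.

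With this in place the two directions go as follows. For ``$\Leftarrow$'': given $K\in L(H)$, $\|K\|\leqslant1$, let $B=-A^{\star}$ on $D(B)=\{x:(K-1)\Gamma_1 x-(K+1)\Gamma_2 x=0\}=\{x:K(\Gamma_1 x+\Gamma_2 x)=\Gamma_1 x-\Gamma_2 x\}$. For $x\in D(B)$, $\|\Gamma_1 x-\Gamma_2 x\|=\|K(\Gamma_1 x+\Gamma_2 x)\|\leqslant\|\Gamma_1 x+\Gamma_2 x\|$, so $B$ is dissipative by the equivalence above; $B$ is densely defined since $D(A)\subseteq D(B)$ (note $\Gamma_1 x=\Gamma_2 x=0$ for $x\in D(A)$, which follows from (BT-1) and (BT-2), see the argument in the next paragraph), and closed because $-A^{\star}$ is closed and $D(B)$ is the preimage of a closed set under the (I would check: closed, or at least such that the graph closes up) map $x\mapsto(K-1)\Gamma_1 x-(K+1)\Gamma_2 x$. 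For maximality: suppose $B\subsetneq C$ with $C$ dissipative; by Proposition~\ref{PROP-1}, $C=-A^{\star}|_{D(C)}$ with $D(B)\subsetneq D(C)\subseteq D(-A^{\star})$, so there is $x\in D(C)\setminus D(B)$; set $\xi=\Gamma_1 x+\Gamma_2 x$, $\eta=\Gamma_1 x-\Gamma_2 x$, so $\eta\neq K\xi$. Dissipativity of $C$ forces $\|\eta\|\leqslant\|\xi\|$; now I would combine $x$ with a suitable $x'\in D(B)$ having prescribed boundary values (via (BT-2)) to produce an element of $D(C)$ violating dissipativity — concretely, pick $x'$ with $\Gamma_1 x'+\Gamma_2 x'=\xi'$ arbitrary and $\Gamma_1 x'-\Gamma_2 x'=K\xi'$, form $tx+x'$, expand $\|\Gamma_1(tx+x')-\Gamma_2(tx+x')\|^2-\|\Gamma_1(tx+x')+\Gamma_2(tx+x')\|^2=\|t\eta+K\xi'\|^2-\|t\xi+\xi'\|^2$ as a real quadratic in $t$ (after also scaling $\xi'$ by a phase), and choose $\xi',t$ to make it positive using $\eta\neq K\xi$ — contradiction. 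Hence $B$ is maximal dissipative, and Theorem~\ref{Lumer-Phillips} gives the semigroup.

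For ``$\Rightarrow$'': if $B$ generates a $\Cnull$-semigroup of contractions, then by Theorem~\ref{Lumer-Phillips} $B$ is maximal dissipative, so $B\subseteq-A^{\star}$ by Proposition~\ref{PROP-1} and $B=-A^{\star}|_{D(B)}$. Put $M=\{(\Gamma_1 x+\Gamma_2 x,\Gamma_1 x-\Gamma_2 x):x\in D(B)\}\subseteq H\times H$. First I show $M$ is the graph of a \emph{function}: if $(\xi,\eta_1),(\xi,\eta_2)\in M$ coming from $x_1,x_2\in D(B)$, then $x_1-x_2$ has $\Gamma_1(x_1-x_2)+\Gamma_2(x_1-x_2)=0$; by (BT-2) choose $z\in D(-A^{\star})$ with $\Gamma_1 z=\Gamma_1(x_1-x_2)$ and $\Gamma_2 z=-\Gamma_2(x_1-x_2)$, then $x_1-x_2+z$ and $x_1-x_2-z$ both lie over pairs whose first coordinate is $0$; applying dissipativity of $B$ to elements of the form $x_2+t(x_1-x_2)\in D(B)$ and letting $t\to\infty$ forces $\|\Gamma_1(x_1-x_2)-\Gamma_2(x_1-x_2)\|\leqslant0$, hence $\eta_1=\eta_2$. (This same argument with $x_1,x_2\in D(A)$, together with (BT-1) pairing against arbitrary boundary values supplied by (BT-2), shows $\Gamma_1=\Gamma_2=0$ on $D(A)$.) So $M$ is the graph of a linear map $K$ with $D(K)=\{\Gamma_1 x+\Gamma_2 x:x\in D(B)\}$; dissipativity gives $\|K\xi\|\leqslant\|\xi\|$ on $D(K)$, and maximality gives $D(K)=H$ (otherwise extend $K$ by $0$ on a complement to get a strictly larger dissipative extension via (BT-2), contradicting maximality), so $K\in L(H)$ with $\|K\|\leqslant1$ and $D(B)=\{x:(K-1)\Gamma_1 x-(K+1)\Gamma_2 x=0\}$. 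Uniqueness of $K$ is immediate since $\Gamma_1 x+\Gamma_2 x$ ranges over all of $H$ by (BT-2) and $K$ is determined on it by $D(B)$.

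I expect the main obstacle to be the maximality half of ``$\Leftarrow$'' (and, dually, the surjectivity $D(K)=H$ in ``$\Rightarrow$''): showing that a proper dissipative overextension of $B$ must violate the contraction inequality. The clean way is the quadratic-form argument sketched above, using (BT-2) to realize prescribed boundary data and Proposition~\ref{PROP-1} to guarantee that any competing extension is still a restriction of $-A^{\star}$; one must be a little careful with complex phases when reducing $\Re\langle\cdot,\cdot\rangle$ terms, but no genuinely new idea is needed. A secondary technical point is verifying that $D(B)$ is closed (equivalently that $B$ is closed), for which I would either invoke that $D(-A^{\star})$ carries the graph norm of $A^{\star}$ making $\Gamma_1,\Gamma_2$ bounded — a fact typically established right after Definition~\ref{dfn-BT} — or argue directly from closedness of $A^{\star}$ and the finite-rank-type structure of the boundary map.
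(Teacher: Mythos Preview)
Your approach coincides with the paper's: the same change of variables $(\xi,\eta)=(\Gamma_1 x+\Gamma_2 x,\Gamma_1 x-\Gamma_2 x)$, the same equivalence between dissipativity and $\|\eta\|\leqslant\|\xi\|$ (Lemma~\ref{LEM-1}), and the same reduction via Proposition~\ref{PROP-1} to a correspondence between domains and graphs of contractions. The paper merely organizes this as a pair of mutually inverse maps $\Phi,\Psi$ (Propositions~\ref{PROP-2} and \ref{PROP-3-NEU}), which makes your maximality step in ``$\Leftarrow$'' a one-liner: if $C\supseteq\Psi(K)$ is dissipative then $\Phi(C)\supseteq K$, and $D(K)=H$ forces $\Phi(C)=K$, so $C=\Psi(\Phi(C))=\Psi(K)$. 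Your quadratic-in-$t$ argument is this same well-definedness observation unpacked---indeed, the substitution $\xi'=-t\xi$ already gives $\|t\eta+K\xi'\|=|t|\,\|\eta-K\xi\|>0$ versus $\|t\xi+\xi'\|=0$, so no delicate analysis is required.

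Two small corrections: your ``extend $K$ by $0$ on a complement'' only yields a contraction after first passing to the closure of $D(K)$ and then using the \emph{orthogonal} complement (the paper invokes Hahn--Banach instead); and the separate closedness verification for $B$ is unnecessary, since maximal dissipativity already implies closedness by Lemma~\ref{LEM}(ii).
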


\smallskip

Note that we \textit{explicitly} construct a bijection and its inverse \red{below}. For applications the technical version, Proposition \ref{PROP-3-NEU}, of the theorem above can be more handy.

\smallskip

For the rest of this section we fix a densely defined, closed and skew-symmetric operator $A$ and a boundary triplet $(H,\Gamma_1,\Gamma_2)$ for $A$. We start with the following fundamental relations between operators and boundary triplets.

\smallskip

\begin{lem}\label{LEM-0} Let $A$ be as above.
\begin{compactitem}\vspace{2pt}
\item[(i)] $D(A)=\{x\in D(-A^{\star})\:;\:\Gamma_1x=\Gamma_2x=0\}$.\vspace{3pt}
\item[(ii)] $\forall\:x\in D(-A^{\star})\colon\Re\langle{}-A^{\star}x,x\rangle{}_X=-\Re\langle{}\Gamma_1x,\Gamma_2x\rangle{}_H$.
\end{compactitem} 
\end{lem}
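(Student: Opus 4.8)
The plan is to read both statements directly off the abstract Green's identity (BT-1), using the surjectivity of the joint boundary map supplied by (BT-2), and to identify the set in (i) with $D(A^{\star\star})$ via the defining property of the adjoint.

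For (ii) I would simply specialize (BT-1) to $y=x$. Since $\langle x,A^\star x\rangle_X=\overline{\langle A^\star x,x\rangle_X}$ and $\langle\Gamma_2x,\Gamma_1x\rangle_H=\overline{\langle\Gamma_1x,\Gamma_2x\rangle_H}$, both sides collapse to twice a real part, so $2\Re\langle A^\star x,x\rangle_X=2\Re\langle\Gamma_1x,\Gamma_2x\rangle_H$; dividing by $2$ and negating gives the claim. There is no obstacle here.

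For (i) set $D_0:=\{x\in D(-A^{\star})\,;\,\Gamma_1x=\Gamma_2x=0\}$. To show $D(A)\subseteq D_0$, take $x\in D(A)$. Skew-symmetry gives $x\in D(A^{\star})=D(-A^{\star})$ with $A^{\star}x=-Ax$, and the fundamental adjoint relation gives $\langle x,A^{\star}y\rangle_X=\langle Ax,y\rangle_X$ for all $y\in D(A^{\star})$; hence the left-hand side of (BT-1) is $\langle -Ax,y\rangle_X+\langle Ax,y\rangle_X=0$ for every $y\in D(-A^{\star})$, so $\langle\Gamma_1x,\Gamma_2y\rangle_H+\langle\Gamma_2x,\Gamma_1y\rangle_H=0$ for all such $y$. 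By (BT-2) the pair $(\Gamma_1y,\Gamma_2y)$ ranges over all of $H\times H$, and taking it equal to $(0,v)$ and then to $(v,0)$ forces $\Gamma_1x=0$ and $\Gamma_2x=0$, i.e. $x\in D_0$. For the reverse inclusion, take $x\in D_0$; then the right-hand side of (BT-1) vanishes for every $y\in D(-A^{\star})=D(A^{\star})$, so $\langle A^{\star}y,x\rangle_X=\langle y,-A^{\star}x\rangle_X$ for all $y\in D(A^{\star})$. By definition of the adjoint this says $x\in D(A^{\star\star})$ with $A^{\star\star}x=-A^{\star}x$.

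It then remains to note that $A^{\star\star}=A$. Indeed, skew-symmetry gives $D(A)\subseteq D(A^{\star})$, so $A^{\star}$ is densely defined, and $A$ is closable by Lemma~\ref{LEM}(i) (being dissipative), whence $A^{\star\star}=\bar A$; since $A$ is assumed closed, $A^{\star\star}=A$ and therefore $x\in D(A)$. The only point requiring a moment's care is precisely this appeal to $A^{\star\star}=A$ together with the density of $D(A^{\star})$; all the rest is a mechanical unwinding of (BT-1) and (BT-2).
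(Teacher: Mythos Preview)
Your proof is correct and follows essentially the same route as the paper: both read (ii) off (BT-1) with $y=x$, and for (i) both use (BT-1) together with (BT-2) to kill the boundary terms for the inclusion $D(A)\subseteq D_0$, and then identify $D_0$ with $D(A^{\star\star})=D(A)$ via the closedness of $A$. The only cosmetic difference is that for ``$\subseteq$'' the paper picks a single $y$ with $\Gamma_1y=\Gamma_2x$, $\Gamma_2y=\Gamma_1x$ to obtain $\|\Gamma_1x\|^2+\|\Gamma_2x\|^2=0$ in one shot, whereas you test against $(0,v)$ and $(v,0)$ separately.
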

\begin{proof} (i) \textquotedblleft{}$\subseteq$\textquotedblright{} Let $x\in D(A)$. We consider the points $\Gamma_2x$ and $\Gamma_1x$ in $H$ and select $y\in D(\red{-}A^{\star})$ with $\Gamma_1y=\Gamma_2x$ and $\Gamma_2y=\Gamma_1x$. According to (BT-1) we have
$$
\langle{}A^{\star}x,y\rangle{}_X+\langle{}x,A^{\star}y\rangle{}_X = \langle{}\Gamma_1x,\Gamma_1x\rangle_H+\langle{}\Gamma_2x,\Gamma_2x\rangle_H=\|\Gamma_1x\|_H^2+\|\Gamma_2x\|_H^2
$$
where the left-hand side is zero since $A^{\star}x=-Ax$. Thus, $\Gamma_1x=\Gamma_2x=0$ follows.

\smallskip

\textquotedblleft{}$\supseteq$\textquotedblright{} Let $x\in D(-A^{\star})$ be given with $\Gamma_1x=\Gamma_2x=0$. Then the right-hand side of (BT-1) is zero for any $y\in D(-A^{\star})$ and we obtain by conjugation
$$
\langle{}A^{\star}y,x\rangle_X=-\langle{}y,A^{\star}x\rangle_X
$$
for all $y\in D(-A^{\star})$. Thus, the map $D(-A^{\star})\rightarrow\mathbb{C}$, $y\mapsto\langle{}A^{\star}y,x\rangle{}_X$ is continuous and consequently $x\in D((-A^{\star})^{\star})$. As $A$ is closed, \cite[X.1.6]{C} yields $A^{\star\star}=A$ and thus $x\in D((-A^{\star})^{\star})=D(A^{\star\star})=D(A)$ follows.

\smallskip

(ii) Let $x\in D(-A^{\star})$. We use (BT-1) to compute
$$
2\Re\langle{}A^{\star}x,x\rangle{}_X=\langle{}A^{\star}x,x\rangle{}_X+\langle{}x,A^{\star}x\rangle{}_X=\langle{}\Gamma_1x,\Gamma_2x\rangle_H+\langle{}\Gamma_2x,\Gamma_1x\rangle_H=2\Re\langle{}\Gamma_1x,\Gamma_2x\rangle_H
$$
which shows the equation in (ii).
\end{proof}

\smallskip

The next lemma contains a first characterization of dissipative extensions via an associated boundary triplet in terms of an estimate.

\smallskip

\begin{lem}\label{LEM-1} Let $B$ be an operator with $A\subseteq B\subseteq -A^{\star}$. Then $B$ is dissipative if and only if $\|\Gamma_1x\red{-}\Gamma_2x\|_H\leqslant\|\Gamma_1x\red{+}\Gamma_2x\|_H$ holds for all $x\in D(B)$.
\end{lem}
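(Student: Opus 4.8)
The plan is to translate the dissipativity condition for $B$ into a statement about the boundary maps by using Lemma~\ref{LEM-0}(ii), which identifies $\Re\langle{}-A^{\star}x,x\rangle{}_X$ with $-\Re\langle{}\Gamma_1x,\Gamma_2x\rangle{}_H$ for every $x\in D(-A^{\star})$, and hence in particular for every $x\in D(B)$ since $B\subseteq-A^{\star}$ forces $Bx=-A^{\star}x$ there. Thus $B$ is dissipative if and only if $\Re\langle{}\Gamma_1x,\Gamma_2x\rangle{}_H\geqslant0$ for all $x\in D(B)$. The remaining work is purely a Hilbert space identity: for any two vectors $u,v\in H$ one has the polarization-type relation
$$
\|u+v\|_H^2-\|u-v\|_H^2=4\Re\langle{}u,v\rangle{}_H,
$$
so that $\Re\langle{}u,v\rangle{}_H\geqslant0$ is equivalent to $\|u-v\|_H\leqslant\|u+v\|_H$. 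Applying this with $u=\Gamma_1x$ and $v=\Gamma_2x$ gives exactly the claimed equivalence.

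Concretely, I would proceed as follows. First, observe that the hypothesis $A\subseteq B\subseteq-A^{\star}$ guarantees $D(B)\subseteq D(-A^{\star})$ and $Bx=-A^{\star}x$ for all $x\in D(B)$, so Lemma~\ref{LEM-0}(ii) applies verbatim to elements of $D(B)$ and yields $\Re\langle{}Bx,x\rangle{}_X=-\Re\langle{}\Gamma_1x,\Gamma_2x\rangle{}_H$. Second, record the elementary identity displayed above (it is immediate by expanding both squared norms). Third, chain the two: $\Re\langle{}Bx,x\rangle{}_X\leqslant0$ holds for all $x\in D(B)$ iff $\Re\langle{}\Gamma_1x,\Gamma_2x\rangle{}_H\geqslant0$ for all such $x$, iff $\|\Gamma_1x-\Gamma_2x\|_H^2\leqslant\|\Gamma_1x+\Gamma_2x\|_H^2$ for all such $x$, which is the assertion after taking square roots.

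There is essentially no obstacle here; the lemma is a bookkeeping consequence of the abstract Green identity already packaged in Lemma~\ref{LEM-0}(ii). The only point requiring a word of care is making sure that dissipativity of $B$ is literally evaluated via $\langle{}Bx,x\rangle{}_X$ and not via $\langle{}-A^{\star}x,x\rangle{}_X$ on the larger domain $D(-A^{\star})$ — but since $B$ and $-A^{\star}$ agree on $D(B)$, this is automatic, and the inequality is only asserted for $x\in D(B)$, which matches. So the proof is three short lines: invoke Lemma~\ref{LEM-0}(ii) on $D(B)$, invoke the norm identity, and combine.
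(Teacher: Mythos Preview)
Your proof is correct and follows essentially the same route as the paper: both combine Lemma~\ref{LEM-0}(ii) with the elementary identity $\|u+v\|_H^2-\|u-v\|_H^2=4\Re\langle u,v\rangle_H$ applied to $u=\Gamma_1x$, $v=\Gamma_2x$. The only cosmetic difference is the order of presentation---the paper expands the norms first and then invokes Lemma~\ref{LEM-0}(ii), whereas you do the reverse---but the substance is identical.
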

\begin{proof} For $x\in D(-A^{\star})$ and $\epsilon=\pm1$ we compute
\begin{equation*}
\begin{array}{r@{}l}\vspace{3pt}
\|\Gamma_1x+\epsilon\Gamma_2x\|_H^2 &{}\;=\; \langle{}\Gamma_1x+\epsilon\Gamma_2x, \Gamma_1x+\epsilon\Gamma_2x\rangle{}_H \\\vspace{5pt}
&{}\;=\; \langle{}\Gamma_1x,\Gamma_1x\rangle{}_H+\langle{}\Gamma_1x,\epsilon\Gamma_2x\rangle{}_H+\langle{}\epsilon\Gamma_2x,\Gamma_1x\rangle{}_H+\langle{}\epsilon\Gamma_2x,\epsilon\Gamma_2x\rangle{}_H\\
&{}\;=\; \|\Gamma_1x\|_H^2+\|\Gamma_2x\|_H^2+\epsilon\,2\Re\langle{}\Gamma_1x,\Gamma_2x\rangle{}_H.
\end{array}
\end{equation*}
Subtracting the two equations for $\epsilon=\pm1$, using Lemma \ref{LEM-0}(ii) and $B\subseteq-A^{\star}$ we obtain that
$$
\|\Gamma_1x\red{-}\Gamma_2x\|_H^2-\|\Gamma_1x\red{+}\Gamma_2x\|_H^2=-4\Re\langle{}\Gamma_1x,\Gamma_2x\rangle{}_H=4\Re\langle{}Bx,x\rangle{}_X
$$
holds for every $x\in D(B)$. The operator $B$ is dissipative if and only if the above is less or equal zero for every $x\in D(B)$. And this is equivalent to $\|\Gamma_1x\red{-}\Gamma_2x\|_H\leqslant\|\Gamma_1x\red{+}\Gamma_2x\|_H$ for every $x\in D(B)$.
\end{proof}

\smallskip

Our aim is now to parametrize the estimate in Lemma \ref{LEM-1} by a contractive operator. For a dissipative extension $B$ of $A$ we therefore define
\begin{equation}\label{PHI}
\begin{array}{r@{}l}\vspace{3pt}
\Phi(B)(\Gamma_1x\red{+}\Gamma_2x)&{}\;=\;\Gamma_1x\red{-}\Gamma_2x \\
D(\Phi(B))&{}\;=\;\{\Gamma_1x\red{+}\Gamma_2x\:;\:x\in D(B)\}
\end{array}
\end{equation}
which is a contractive operator in $H$ by the following lemma. Here we call $K\colon D(K)\rightarrow H$ \textit{contractive} if $\|Kx\|_H\leqslant\|x\|_H$ holds for all $x\in D(K)$. The term \textit{contraction} is reserved for those contractive $K$ which are defined on the whole space.

\smallskip

\begin{lem}\label{LEM-x} The operator $\Phi(B)\colon D(\Phi(B))\rightarrow H$ is well-defined and contractive.
\end{lem}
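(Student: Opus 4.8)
The plan is to verify that $\Phi(B)$ is well-defined, i.e.\ that the formula $\Phi(B)(\Gamma_1x+\Gamma_2x)=\Gamma_1x-\Gamma_2x$ does not depend on the choice of $x\in D(B)$ representing a given element of $D(\Phi(B))$, and then to read off contractivity from Lemma \ref{LEM-1}. For well-definedness, suppose $x,x'\in D(B)$ satisfy $\Gamma_1x+\Gamma_2x=\Gamma_1x'+\Gamma_2x'$. Put $z=x-x'\in D(B)$, which is a linear subspace since $B$ is an operator. Then $\Gamma_1z+\Gamma_2z=0$ by linearity of $\Gamma_1$ and $\Gamma_2$. I want to conclude $\Gamma_1z-\Gamma_2z=0$, which will give $\Gamma_1x-\Gamma_2x=\Gamma_1x'-\Gamma_2x'$ and hence well-definedness.

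The key step is therefore: if $z\in D(B)$ and $\Gamma_1z+\Gamma_2z=0$, then $\Gamma_1z-\Gamma_2z=0$. This is exactly where dissipativity of $B$ enters. Apply Lemma \ref{LEM-1} (note $A\subseteq B\subseteq-A^{\star}$ holds by Proposition \ref{PROP-1}, since $B$ is a dissipative extension of $A$): we get $\|\Gamma_1z-\Gamma_2z\|_H\leqslant\|\Gamma_1z+\Gamma_2z\|_H=0$, so $\Gamma_1z-\Gamma_2z=0$. I expect this to be the main (and only real) obstacle, and it dissolves immediately once one remembers to invoke Lemma \ref{LEM-1} rather than trying to argue directly with the Green identity.

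Linearity of $\Phi(B)$ is then routine: given two elements $\Gamma_1x+\Gamma_2x$ and $\Gamma_1x'+\Gamma_2x'$ of $D(\Phi(B))$ with $x,x'\in D(B)$ and scalars $\alpha,\beta$, the combination $\alpha(\Gamma_1x+\Gamma_2x)+\beta(\Gamma_1x'+\Gamma_2x')$ equals $\Gamma_1(\alpha x+\beta x')+\Gamma_2(\alpha x+\beta x')$ with $\alpha x+\beta x'\in D(B)$, and applying $\Phi(B)$ gives $\Gamma_1(\alpha x+\beta x')-\Gamma_2(\alpha x+\beta x')=\alpha(\Gamma_1x-\Gamma_2x)+\beta(\Gamma_1x'-\Gamma_2x')$; this also shows $D(\Phi(B))$ is a subspace. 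Finally, contractivity: for any $u\in D(\Phi(B))$ write $u=\Gamma_1x+\Gamma_2x$ with $x\in D(B)$; then $\|\Phi(B)u\|_H=\|\Gamma_1x-\Gamma_2x\|_H\leqslant\|\Gamma_1x+\Gamma_2x\|_H=\|u\|_H$, again by Lemma \ref{LEM-1}. This completes the proof.
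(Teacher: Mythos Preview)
Your proof is correct. The only difference from the paper's argument is in the well-definedness step: the paper substitutes $\Gamma_1z=-\Gamma_2z$ into Lemma~\ref{LEM-0}(ii) to get $\Re\langle Bz,z\rangle_X=\|\Gamma_1z\|_H^2$, and then squeezes this between $0$ and $0$ using dissipativity, concluding $\Gamma_1z=\Gamma_2z=0$; you instead apply Lemma~\ref{LEM-1} directly to obtain $\|\Gamma_1z-\Gamma_2z\|_H\leqslant\|\Gamma_1z+\Gamma_2z\|_H=0$. Your route is slightly more economical since it reuses the same lemma for both well-definedness and contractivity, while the paper's route yields the marginally stronger (but here unneeded) conclusion that $\Gamma_1z$ and $\Gamma_2z$ vanish separately.
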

\begin{proof}  Let $x$, $y\in D(B)$ with $\Gamma_1x\red{+}\Gamma_2x=\Gamma_1y\red{+}\Gamma_2y$ be given. Put $z=x-y$. Then $\Gamma_1z=-\Gamma_2z$ holds. Since Proposition \ref{PROP-1} implies $B\subseteq-A^{\star}$ we can use Lemma \ref{LEM-0}(ii) and $B\subseteq-A^{\star}$ to compute 
$$
\Re\langle{}Bz,z\rangle_X = -\Re\langle{}\Gamma_1z,\Gamma_2z\rangle_H = \Re\langle{}\Gamma_1z,\Gamma_1z\rangle_H = \|\Gamma_1z\|_H^2.
$$
The latter is greater or equal to zero in view of the norm and less or equal to zero since $B$ is dissipative. Thus, $-\Gamma_2z=\Gamma_1z=0$ and therefore $\Gamma_1x=\Gamma_1y$ and $\Gamma_2x=\Gamma_2y$. This shows that $\Phi(B)$ is well defined. It is then clear that $\Phi(B)$ is linear.

\smallskip

To show the estimate, let $y\in D(\Phi(B))$ be given. By definition there is $x\in D(B)$ such that $y=\Gamma_1x\red{+}\Gamma_2x$ holds. The dissipativity of $B$ yields by Lemma \ref{LEM-1} the estimate
$$
\|\Phi(B)y\|_H=\|\Phi(B)(\Gamma_1x\red{+}\Gamma_2x)\|_H=\|\Gamma_1x\red{-}\Gamma_2x\|_H\leqslant\|\Gamma_1x\red{+}\Gamma_2x\|_H=\|y\|_H
$$
which shows that $\Phi(B)$ is contractive.
\end{proof}

\smallskip

The above yields a map $\Phi$ which assigns to a given dissipative extension a contractive operator. Next we consider a contractive operator $K\colon D(K)\rightarrow H$ and define 
\begin{equation}\label{PSI}
\begin{array}{r@{}l}\vspace{3pt}
\Psi(K)&{}\;=\;-A^{\star}|_{D(\Psi(K))}\\
D(\Psi(K))&{}\;=\;\bigl\{x\in D(-A^{\star})\:;\:\Gamma_1x\red{+}\Gamma_2x\in D(K) \,\text{ and } \,K(\Gamma_1x\red{+}\Gamma_2x)=\Gamma_1x\red{-}\Gamma_2x\bigr\}
\end{array}
\end{equation}
which is a dissipative operator in $X$ that extends $A$ by the following lemma.

\smallskip

\begin{lem}\label{LEM-2} The operator $\Psi(K)\colon D(\Psi(K))\rightarrow X$ is a dissipative extension of $A$.
\end{lem}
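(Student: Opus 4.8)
The plan is to verify three things about $\Psi(K)$: that it extends $A$, that it is well-defined as an operator (i.e., $\Psi(K)$ really is a restriction of $-A^{\star}$ — this is automatic from the definition, so there is nothing to check there), and that it is dissipative. The extension property is the easy part: by Lemma \ref{LEM-0}(i) every $x\in D(A)$ satisfies $\Gamma_1x=\Gamma_2x=0$, hence $\Gamma_1x+\Gamma_2x=0\in D(K)$ (since $D(K)$ is a linear subspace, it contains $0$) and $K(\Gamma_1x+\Gamma_2x)=K0=0=\Gamma_1x-\Gamma_2x$, so $x\in D(\Psi(K))$; moreover $\Psi(K)x=-A^{\star}x=Ax$ because $A\subseteq-A^{\star}$. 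This shows $A\subseteq\Psi(K)$, and in particular $\Psi(K)$ is densely defined.

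Next I would record that $A\subseteq\Psi(K)\subseteq-A^{\star}$ so that Lemma \ref{LEM-1} is applicable to $B=\Psi(K)$. By that lemma, $\Psi(K)$ is dissipative if and only if $\|\Gamma_1x-\Gamma_2x\|_H\leqslant\|\Gamma_1x+\Gamma_2x\|_H$ for every $x\in D(\Psi(K))$. But for such an $x$ we have $\Gamma_1x+\Gamma_2x\in D(K)$ and $\Gamma_1x-\Gamma_2x=K(\Gamma_1x+\Gamma_2x)$ by the very definition of $D(\Psi(K))$ in \eqref{PSI}, so the contractivity of $K$ gives
$$
\|\Gamma_1x-\Gamma_2x\|_H=\|K(\Gamma_1x+\Gamma_2x)\|_H\leqslant\|\Gamma_1x+\Gamma_2x\|_H,
$$
which is exactly the required inequality. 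Hence $\Psi(K)$ is dissipative.

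There is essentially no hard part here; the statement is a direct bookkeeping consequence of Lemma \ref{LEM-0}(i), Lemma \ref{LEM-1}, and the definition \eqref{PSI}. The one point worth being careful about is making sure that Lemma \ref{LEM-1} genuinely applies, which requires the chain $A\subseteq\Psi(K)\subseteq-A^{\star}$; the first inclusion is what the first paragraph establishes and the second is immediate since $\Psi(K)$ is by construction a restriction of $-A^{\star}$ to a subset of $D(-A^{\star})$. (Note that Proposition \ref{PROP-1} is not needed in this direction — it is only needed later, in the analysis of $\Phi$, to know a priori that a dissipative extension sits inside $-A^{\star}$; here we have put $\Psi(K)$ inside $-A^{\star}$ by hand.) So the proof is just: invoke Lemma \ref{LEM-0}(i) for the extension claim, then invoke Lemma \ref{LEM-1} together with $\|K\cdot\|\leqslant\|\cdot\|$ for dissipativity.
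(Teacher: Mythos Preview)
Your proof is correct and follows essentially the same route as the paper: first use Lemma~\ref{LEM-0}(i) to obtain $A\subseteq\Psi(K)\subseteq-A^{\star}$, then apply Lemma~\ref{LEM-1} together with the contractivity of $K$ to conclude dissipativity. Your write-up is slightly more detailed (you spell out why $0\in D(K)$ and note that $\Psi(K)$ is densely defined), but the argument is the same.
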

\begin{proof} We have $\Psi(K)\subseteq-A^{\star}$ by definition. Let $x\in D(A)$, i.e., $\Gamma_1x=\Gamma_2x=0$ by Lemma \ref{LEM-0}(i) and thus $x\in D(\Psi(K))$ holds. Therefore we have $A\subseteq\Psi(K)\subseteq-A^{\star}$.

\smallskip

Let $x\in D(\Psi(K))$. Then $K(\Gamma_1x\red{+}\Gamma_2x)=\Gamma_1x\red{-}\Gamma_2x$ holds. Since $K$ is contractive it \red{follows in particular that} $\|\Gamma_1x\red{-}\Gamma_2x\|_H\leqslant\|\Gamma_1x\red{+}\Gamma_2x\|_H$. Thus, $\Psi(K)$ is dissipative by Lemma \ref{LEM-1}.
\end{proof}

\smallskip

We showed that the map $\Psi$ assigns to a given contractive operator $K$ a dissipative extension. Below we investigate the relation between the maps $\Phi$ and $\Psi$.

\smallskip

\begin{prop}\label{PROP-2} (i) We have $(\Psi\circ\Phi)(B)=B$ for every dissipative extension $B$ of $A$.
\begin{compactitem}\vspace{3pt}
\item[(ii)] We have $(\Phi\circ\Psi)(K)=K$ for every contractive $K$ in $H$. 
\end{compactitem}
\end{prop}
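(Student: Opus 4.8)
The plan is to verify the two identities by unravelling the definitions of $\Phi$ and $\Psi$ and using the already-established facts that both maps are well-defined, together with Lemma \ref{LEM-0}(i) and (BT-2).

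For part (i), fix a dissipative extension $B$ of $A$; by Proposition \ref{PROP-1} we have $A\subseteq B\subseteq -A^{\star}$. Set $K=\Phi(B)$, which by Lemma \ref{LEM-x} is a well-defined contractive operator with domain $D(K)=\{\Gamma_1x+\Gamma_2x\,;\,x\in D(B)\}$ and $K(\Gamma_1x+\Gamma_2x)=\Gamma_1x-\Gamma_2x$ for $x\in D(B)$. I must show $D(\Psi(K))=D(B)$, since both operators are then restrictions of $-A^{\star}$ and hence agree. The inclusion $D(B)\subseteq D(\Psi(K))$ is immediate from the defining formula \eqref{PSI}: for $x\in D(B)$ we have $\Gamma_1x+\Gamma_2x\in D(K)$ and $K(\Gamma_1x+\Gamma_2x)=\Gamma_1x-\Gamma_2x$ by definition of $\Phi(B)$. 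For the reverse inclusion, take $x\in D(\Psi(K))$, so $\Gamma_1x+\Gamma_2x\in D(K)$, i.e.\ $\Gamma_1x+\Gamma_2x=\Gamma_1x'+\Gamma_2x'$ for some $x'\in D(B)$, and moreover $K(\Gamma_1x+\Gamma_2x)=\Gamma_1x-\Gamma_2x$. Using the formula for $K$ on $x'$ and well-definedness of $\Phi(B)$ (Lemma \ref{LEM-x}, which establishes that $\Gamma_1x'+\Gamma_2x'$ determines $\Gamma_1x'$ and $\Gamma_2x'$ individually on $D(B)$), I get $\Gamma_1x=\Gamma_1x'$ and $\Gamma_2x=\Gamma_2x'$, whence $\Gamma_1(x-x')=\Gamma_2(x-x')=0$. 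Since $x-x'\in D(-A^{\star})$, Lemma \ref{LEM-0}(i) gives $x-x'\in D(A)\subseteq D(B)$, and therefore $x\in D(B)$. This proves $D(\Psi(K))=D(B)$ and hence $(\Psi\circ\Phi)(B)=B$.

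For part (ii), fix a contractive operator $K\colon D(K)\to H$ and set $B=\Psi(K)$, a dissipative extension of $A$ by Lemma \ref{LEM-2}. I must show $\Phi(B)=K$, i.e.\ that $D(\Phi(B))=D(K)$ and the two operators agree there. By \eqref{PHI}, $D(\Phi(B))=\{\Gamma_1x+\Gamma_2x\,;\,x\in D(B)\}$. For $x\in D(B)=D(\Psi(K))$ we have $\Gamma_1x+\Gamma_2x\in D(K)$ by \eqref{PSI}, so $D(\Phi(B))\subseteq D(K)$; conversely, given $u\in D(K)$, surjectivity (BT-2) lets me choose $x\in D(-A^{\star})$ with $\Gamma_1x+\Gamma_2x=u$ and $\Gamma_1x-\Gamma_2x=Ku$ --- concretely, pick $x$ realizing the pair $\bigl(\Gamma_1x,\Gamma_2x\bigr)=\bigl(\tfrac12(u+Ku),\tfrac12(u-Ku)\bigr)$, which is legitimate by (BT-2) --- so that $x\in D(\Psi(K))=D(B)$ and $u=\Gamma_1x+\Gamma_2x\in D(\Phi(B))$. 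Hence $D(\Phi(B))=D(K)$. Finally, for $u=\Gamma_1x+\Gamma_2x\in D(\Phi(B))$ with $x\in D(B)$, the defining relation of $\Psi(K)$ gives $K(\Gamma_1x+\Gamma_2x)=\Gamma_1x-\Gamma_2x=\Phi(B)(\Gamma_1x+\Gamma_2x)$, so $K$ and $\Phi(B)$ agree on their common domain.

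The only subtlety --- and the step I expect to require the most care --- is the use of well-definedness of $\Phi(B)$ in part (i): one must genuinely invoke Lemma \ref{LEM-x} to pass from the equality $\Gamma_1x+\Gamma_2x=\Gamma_1x'+\Gamma_2x'$ to the componentwise equalities, rather than treating it as automatic. Everything else is bookkeeping with the definitions and an application of (BT-2) and Lemma \ref{LEM-0}(i).
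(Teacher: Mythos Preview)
Your proof is correct and follows essentially the same route as the paper's. The only difference is in part (i): the paper disposes of the reverse inclusion $D((\Psi\circ\Phi)(B))\subseteq D(B)$ in a single (somewhat terse) sentence, whereas you spell out explicitly that the two conditions in \eqref{PSI} force $\Gamma_1x=\Gamma_1x'$ and $\Gamma_2x=\Gamma_2x'$, and then invoke Lemma~\ref{LEM-0}(i) to conclude $x-x'\in D(A)\subseteq D(B)$; this is exactly the argument that makes the paper's sentence work, so your version is simply a more careful rendering of the same idea. One minor remark: your appeal to ``well-definedness of $\Phi(B)$'' in that step is slightly off-target --- what you actually use is just that $K=\Phi(B)$ is a function together with the defining formula $K(\Gamma_1x'+\Gamma_2x')=\Gamma_1x'-\Gamma_2x'$, and then elementary linear algebra on the sum and difference; Lemma~\ref{LEM-x} is not needed at that point.
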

\begin{proof} (i) Let $B$ be a dissipative extension of $A$. Then $(\Psi\circ\Phi)(B)=-A^{\star}|_{D((\Psi\circ\Phi)(B))}$ and since $B\subseteq-A^{\star}$ by Proposition \ref{PROP-1} it is enough to show that $D((\Psi\circ\Phi)(B))=D(B)$ holds. We have
$$
D((\Psi\circ\Phi)(B)) = \big\{x\in D(-A^{\star})\:;\:\Gamma_1x\red{+}\Gamma_2x\in D(\Phi(B)) \, \text{ and } \, \Phi(B)(\Gamma_1x\red{+}\Gamma_2x)=\Gamma_1x\red{-}\Gamma_2x\big\}.
$$
In view of \eqref{PHI}\red{,} the second condition holds provided that $x\in D(B)$. The first condition holds if and only if $x\in D(B)$. Therefore the above set equals $D(B)$ and we are done.

\smallskip

(ii) Let $K\colon D(K)\rightarrow H$ be contractive. We have
\begin{eqnarray*}
D((\Phi\circ\Psi)(K)) & = & \bigl\{\Gamma_1x\red{+}\Gamma_2x\:;\: x\in D(\Psi(K))\bigr\}\\
& = & \bigl\{\Gamma_1x\red{+}\Gamma_2x\:;\:x\in D(-A^{\star}),\:\Gamma_1x\red{+}\Gamma_2x\in D(K) \,\text{ and }\, K(\Gamma_1x\red{+}\Gamma_2x)=\Gamma_1x\red{-}\Gamma_2x\bigr\}
\end{eqnarray*}
and clearly $D((\Phi\circ\Psi)(K))\subseteq D(K)$ holds. To show the equality let $y\in D(K)$ be given. By (BT-2) we find $x\in D(-A^{\star})$ such that
$$
\Gamma_1x={\textstyle\frac{1}{2}}(y+Ky) \; \text{ and } \; \Gamma_2x={\textstyle\frac{1}{2}}(\red{y-Ky}).
$$
We compute $\Gamma_1x\red{+}\Gamma_2x=y\in D(K)$ and $\Gamma_1x\red{-}\Gamma_2x=Ky$. Thus, $K(\Gamma_1x\red{+}\Gamma_2x)=\Gamma_1x\red{-}\Gamma_2x$ and we have $y\in D((\Phi\circ\Psi)(K))$. Let now $\Gamma_1x\red{+}\Gamma_2x$ be an element of $D((\Phi\circ\Psi)(K))$. Then we have $K(\Gamma_1x\red{+}\Gamma_2x)=\Gamma_1x\red{-}\Gamma_2x=((\Phi\circ\Psi)(K))(\Gamma_1x\red{+}\Gamma_2x)$ which shows $K=(\Phi\circ\Psi)(K)$.
\end{proof}

By Proposition \ref{PROP-2} we get a one-to-one correspondence between the dissipative extensions of $A$ on $X$ and the contractive operators in $H$
\begin{equation}\label{BIG-DK}
\bigg\{\hspace{-3pt}\begin{array}{c}
B\colon D(B)\subseteq X\rightarrow X\\
A\subseteq B \; \& \; B \text{ dissipative}
\end{array}\hspace{-3pt}\bigg\}
\begin{array}{c}\vspace{-3pt}
\stackrel{\Phi}{\longrightarrow}\\\vspace{-5pt}
\stackrel{\textstyle\longleftarrow}{\scriptstyle\Psi}\vspace{6pt}
\end{array}
\bigg\{\hspace{-3pt}\begin{array}{c}
K\colon D(K)\subseteq H\rightarrow H\\
K \text{ contractive}
\end{array}\hspace{-3pt}\bigg\}
\end{equation}
where the maps $\Phi$ and $\Psi$ are defined as above.

\smallskip

Theorem \ref{MAIN} states that if $K$ is everywhere defined, then we recover the whole of $K$ and that the operators we get in $X$ are then precisely those which are maximal dissipative. Using the maps $\Phi$ and $\Psi$ on the new domains
\begin{equation}\label{DK}
\begin{array}{r@{}l}\vspace{3pt}
\mathcal{D}&{}\;=\;\big\{B\colon D(B)\rightarrow X\:;\: B \text{ is a maximal dissipative extension of } A\big\}\\
\mathcal{K}&{}\;=\;\big\{K\in L(H) \:;\:K \text{ is a contraction on }H\bigr\}
\end{array}
\end{equation}
the following proposition together with the Lumer-Phillips theorem, see Section \ref{SEC-3}, implies immediately the statement of Theorem \ref{MAIN}. \red{We emphasize at this point that there is a priori more than one maximal dissipative extension. In fact, Theorem \ref{MAIN} shows that there are always infinitely many provided that $H\not=\{0\}$.}

\smallskip

\begin{prop}\label{PROP-3-NEU} The map $\Phi\colon\mathcal{D}\rightarrow\mathcal{K}$ is bijective with inverse $\Psi\colon\mathcal{K}\rightarrow\mathcal{D}$.
\end{prop}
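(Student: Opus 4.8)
The plan is to leverage the two identities already proved in Proposition~\ref{PROP-2}, namely that $\Psi\circ\Phi$ is the identity on every dissipative extension of $A$ and $\Phi\circ\Psi$ is the identity on every contractive operator in $H$, and to upgrade them to the restricted domains. Concretely it suffices to establish the two inclusions $\Psi(\mathcal K)\subseteq\mathcal D$ and $\Phi(\mathcal D)\subseteq\mathcal K$: once these hold, $\Phi$ and $\Psi$ restrict to well-defined maps $\mathcal D\to\mathcal K$ and $\mathcal K\to\mathcal D$, and Proposition~\ref{PROP-2}(i) and (ii) say precisely that these restrictions are mutually inverse. Hence everything reduces to checking that $\Phi$ sends maximal dissipative extensions to contractions on $H$ and that $\Psi$ sends contractions on $H$ to maximal dissipative extensions.

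For the inclusion $\Psi(\mathcal K)\subseteq\mathcal D$, I would fix $K\in\mathcal K$; Lemma~\ref{LEM-2} already gives that $\Psi(K)$ is a dissipative extension of $A$, so only maximality needs proof. I would take any dissipative $B$ with $\Psi(K)\subseteq B$ --- then $B$ is a dissipative extension of $A$, so $\Phi(B)$ is defined by Lemma~\ref{LEM-x} --- and use (BT-2): for an arbitrary $y\in H$ choose $x\in D(-A^\star)$ with $\Gamma_1x=\tfrac12(y+Ky)$ and $\Gamma_2x=\tfrac12(y-Ky)$, so that $\Gamma_1x+\Gamma_2x=y$ and $\Gamma_1x-\Gamma_2x=Ky$; then $x\in D(\Psi(K))\subseteq D(B)$, whence $y\in D(\Phi(B))$ with $\Phi(B)y=\Gamma_1x-\Gamma_2x=Ky$. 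Since $y$ was arbitrary, $\Phi(B)=K$, and Proposition~\ref{PROP-2}(i) forces $B=\Psi(\Phi(B))=\Psi(K)$. Thus $\Psi(K)$ has no proper dissipative extension.

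For the inclusion $\Phi(\mathcal D)\subseteq\mathcal K$ --- the step I expect to be the main obstacle --- I would fix a maximal dissipative extension $B$ of $A$. By Lemma~\ref{LEM-x}, $\Phi(B)$ is contractive, but a priori its domain $V:=D(\Phi(B))=\{\Gamma_1x+\Gamma_2x\,;\,x\in D(B)\}$ could be a proper, possibly non-closed, subspace of $H$; maximality of $B$ must be used to rule this out. The device is to extend $\Phi(B)$ to a genuine contraction $K$ on all of $H$: first by continuity to the closure $\bar V$, then by $0$ on $\bar V^\perp$, which stays contractive because $\|K(v+w)\|_H^2=\|\Phi(B)v\|_H^2\leqslant\|v\|_H^2\leqslant\|v+w\|_H^2$ for $v\in\bar V$ and $w\in\bar V^\perp$; so $K\in\mathcal K$. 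For every $x\in D(B)$ one has $\Gamma_1x+\Gamma_2x\in V\subseteq D(K)$ and $K(\Gamma_1x+\Gamma_2x)=\Phi(B)(\Gamma_1x+\Gamma_2x)=\Gamma_1x-\Gamma_2x$, so $x\in D(\Psi(K))$; combining $B\subseteq-A^\star$ (Proposition~\ref{PROP-1}) with $\Psi(K)=-A^\star|_{D(\Psi(K))}$ yields $B\subseteq\Psi(K)$. Since $\Psi(K)$ is dissipative (Lemma~\ref{LEM-2}) and $B$ is maximal, $B=\Psi(K)$, and therefore $\Phi(B)=\Phi(\Psi(K))=K\in\mathcal K$ by Proposition~\ref{PROP-2}(ii); in particular $V=H$ after all.

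Once both inclusions are in hand, the claimed bijectivity of $\Phi\colon\mathcal D\to\mathcal K$ with inverse $\Psi$ is immediate from Proposition~\ref{PROP-2}. Beyond routine domain-bookkeeping, the only genuinely new ingredient is the contractive-extension construction in the third paragraph; the one point requiring care there is to confirm that extending $\Phi(B)$ by continuity to $\bar V$ and then by $0$ on $\bar V^\perp$ actually produces an element of $\mathcal K$.
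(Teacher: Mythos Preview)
Your proof is correct and follows essentially the same strategy as the paper: verify $\Psi(\mathcal K)\subseteq\mathcal D$ and $\Phi(\mathcal D)\subseteq\mathcal K$ separately, then invoke Proposition~\ref{PROP-2}. The only noteworthy difference is in the extension step for $\Phi(\mathcal D)\subseteq\mathcal K$: the paper appeals to the Hahn--Banach theorem to extend $\Phi(B)$ contractively to all of $H$, whereas you exploit the Hilbert space structure directly (continuous extension to $\bar V$, then zero on $\bar V^\perp$)---your device is more elementary and perfectly adequate here.
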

\begin{proof} The proof is divided into several steps.

\smallskip

(i) If $K\in\mathcal{K}$ and $\Psi(K)\subseteq{}B$ is dissipative, then we have $\Phi(B)=\Phi(\Psi(K))$.

\smallskip

Indeed, we observe
$$
D(\Phi(\Psi(K)))=\bigl\{\Gamma_1x\red{+}\Gamma_2x\:;\:x\in D(\Psi(K))\bigr\} \; \text{ and } \; \Phi(\Psi(K))(\Gamma_1x\red{+}\Gamma_2x)=\Gamma_1x\red{-}\Gamma_2x
$$
and by Proposition \ref{PROP-2} we have $\Phi(\Psi(K))=K$. Since $\Psi(K)\subseteq B$ holds by assumption, it \red{follows that} $\Phi(\Psi(K))\subseteq\Phi(B)$. As $D(\Phi(\Psi(K)))=D(K)=H$ is the whole space, we obtain $\Phi(B)=\Phi(\Psi(K))$.

\smallskip

(ii) The map $\Psi\colon\mathcal{K}\rightarrow\mathcal{D}$ is well-defined.

\smallskip

Let $K\in\mathcal{K}$ be given. Lemma \ref{LEM-2} yields that $\Psi(K)$ is dissipative; we have to show that it is maximal dissipative. Assume that $B$ is a dissipative extension of $\Psi(K)$, i.e., $\Psi(K)\subseteq B$. We claim $\Psi(K)=B$. Let $x\in D(B)$. That is, $\Gamma_1x\red{+}\Gamma_2x\in D(\Phi(B))$. By (i) and Proposition \ref{PROP-2} we have $\Phi(B)=K$ and it \red{follows that} $\Gamma_1x\red{+}\Gamma_2x\in D(K)$. Moreover, (i) implies $K=\Phi(\Psi(K))$ and thus
$$
\Gamma_1x\red{-}\Gamma_2x=\Phi(\Psi(K))(\Gamma_1x\red{+}\Gamma_2x)=K(\Gamma_1x\red{+}\Gamma_2x).
$$
In view of \eqref{PSI} it \red{follows that} $x\in D(\Psi(K))$.

\smallskip

(iii) If $B\in\mathcal{D}$ and $\Phi(B)\subseteq{}K$ with $K\in\mathcal{K}$, then we have $\Phi(B)=\Phi(\Psi(K))=K$.

\smallskip

We claim $B\subseteq\Psi(K)$. Let $x\in D(B)$. That is, $\Gamma_1x\red{+}\Gamma_2x\in D(\Phi(B))$. Since $\Phi(B)\subseteq K$ holds by assumption, we get by \eqref{PHI} that
$$
\Gamma_1x\red{-}\Gamma_2x=\Phi(B)(\Gamma_1x\red{+}\Gamma_2x)=K(\Gamma_1x\red{+}\Gamma_2x).
$$
Thus, $x\in D(\Psi(K))$ and $\Psi(K)x=-A^{\star}x=Bx$ holds by Proposition \ref{PROP-1}. Since $B$ is maximal dissipative and $\Psi(K)$ is dissipative by Lemma \ref{LEM-2} it \red{follows that} $B=\Psi(K)$. But now the conclusion follows from (i).

\smallskip
(iv) The map $\Phi\colon\mathcal{D}\rightarrow\mathcal{K}$ is well-defined.

\smallskip

Let $B\in\mathcal{D}$ be given. We show by contradiction that $D(\Phi(B))=H$ holds. Assume that this is not the case. By Lemma \ref{LEM-x} and the Hahn-Banach Theorem we can extend $\Phi(B)$ to an operator $K\in L(H)$ with $\|K\|_{L(H)}\leqslant1$, i.e., $K\in\mathcal{K}$. By (iii) it \red{follows that} $\Phi(B)=K$ but by our assumptions $\Phi(B)\subset K$ is a proper subset. The contradiction shows that $\Phi(B)\in\mathcal{K}$ holds.

\smallskip

We established that $\Phi\colon\mathcal{D}\rightarrow\mathcal{K}$ and $\Psi\colon\mathcal{K}\rightarrow\mathcal{D}$ are both well-defined. Thus it follows from Proposition \ref{PROP-2} that $\Psi\circ\Phi=\id_{\mathcal{D}}$ and $\Phi\circ\Psi=\id_{\mathcal{K}}$ are valid, which finishes the proof.
\end{proof}

\bigskip


\smallskip

\section{Existence of Boundary Triplets}\label{SEC-5}

In this section we characterize those densely defined, closed and skew-symmetric operators $A$ for which a boundary triplet exists. We note that the applications in Section \ref{SEC-6} are independent of the following. The reader can thus proceed directly with the port-Hamiltonian systems, if he or she likes to.

\smallskip

\begin{thm}\label{EX-THM} Let $A$ be a densely defined, closed and skew-symmetric operator. Then there exists a boundary triplet $(H,\Gamma_1,\Gamma_2)$ for $A$ if and only if $\dim\ker(1-A^{\star})=\dim\ker(1+A^{\star})$ holds.
\end{thm}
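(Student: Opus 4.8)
The plan is to reduce both implications to a statement about a Hermitian form on the quotient $D(-A^{\star})/D(A)$. Write $N_{\pm}=\ker(1\mp A^{\star})$, so that by Lemma \ref{DECOMP-LEM} we have $D(-A^{\star})=D(A)\oplus N_{+}\oplus N_{-}$, with $-A^{\star}$ acting as $-1$ on $N_{+}$ and as $+1$ on $N_{-}$. Introduce the Hermitian sesquilinear form $\beta(x,y)=\langle A^{\star}x,y\rangle_X+\langle x,A^{\star}y\rangle_X$ on $D(-A^{\star})$. Using only the definition of the adjoint one checks $\beta(x,y)=0$ whenever $x\in D(A)$ or $y\in D(A)$, so $\beta$ descends to a form $\bar\beta$ on $D(-A^{\star})/D(A)$, and evaluating it on the summands $N_{\pm}$ gives that, under the identification $D(-A^{\star})/D(A)\cong N_{+}\oplus N_{-}$, the form $\bar\beta$ is block-diagonal and equals $2\langle\cdot,\cdot\rangle_X$ on $N_{+}$ and $-2\langle\cdot,\cdot\rangle_X$ on $N_{-}$; in particular it is non-degenerate, positive definite on a summand of dimension $\dim N_{+}$ and negative definite on one of dimension $\dim N_{-}$. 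Conditions (BT-1) and (BT-2) now say precisely: by Lemma \ref{LEM-0}(i) the kernel of $(\Gamma_1,\Gamma_2)\colon D(-A^{\star})\to H\times H$ is $D(A)$ and by (BT-2) the map is surjective, so it induces a linear isomorphism $\widehat\Gamma\colon D(-A^{\star})/D(A)\to H\times H$, and (BT-1) says that $\widehat\Gamma$ transports $\bar\beta$ to the hyperbolic form $\tau\bigl((y_1,y_2),(z_1,z_2)\bigr)=\langle y_1,z_2\rangle_H+\langle y_2,z_1\rangle_H$ on $H\times H$.

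For the implication ``$\Leftarrow$'' I would argue by construction. Assuming $\dim\ker(1-A^{\star})=\dim\ker(1+A^{\star})$, let $H$ be a Hilbert space of this dimension and fix unitaries $U_{+}\colon N_{+}\to H$ and $U_{-}\colon N_{-}\to H$. For $x\in D(-A^{\star})$ with decomposition $x=x_1+x_2+x_3$ ($x_1\in D(A)$, $x_2\in N_{+}$, $x_3\in N_{-}$) according to Lemma \ref{DECOMP-LEM}, set
$$
\Gamma_1x=U_{+}x_2+U_{-}x_3,\qquad \Gamma_2x=U_{+}x_2-U_{-}x_3.
$$
These are linear, and (BT-2) holds since for $y_1,y_2\in H$ the element $x=U_{+}^{-1}\tfrac{y_1+y_2}{2}+U_{-}^{-1}\tfrac{y_1-y_2}{2}\in N_{+}\oplus N_{-}\subseteq D(-A^{\star})$ satisfies $\Gamma_1x=y_1$ and $\Gamma_2x=y_2$. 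For (BT-1) one expands $\langle\Gamma_1x,\Gamma_2y\rangle_H+\langle\Gamma_2x,\Gamma_1y\rangle_H$: the mixed $N_{+}$--$N_{-}$ contributions cancel in pairs, and since $U_{\pm}$ are isometric what remains is $2\langle x_2,y_2\rangle_X-2\langle x_3,y_3\rangle_X$, which by the computation of $\beta$ above is exactly $\langle A^{\star}x,y\rangle_X+\langle x,A^{\star}y\rangle_X$.

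For the implication ``$\Rightarrow$'' I would use $\widehat\Gamma$ together with an inertia argument. On $H\times H$ the form $\tau$ is positive definite on the diagonal $\{(y,y)\}$ and negative definite on the anti-diagonal $\{(y,-y)\}$; these are $\tau$-orthogonal and span, so $H\times H$ splits as a $\tau$-orthogonal sum of a positive definite and a negative definite subspace, each of dimension $\dim H$. On the other hand $\widehat\Gamma$ carries $N_{+}$ and $N_{-}$ to a $\tau$-positive definite subspace of dimension $\dim N_{+}$ and a $\tau$-negative definite subspace of dimension $\dim N_{-}$, again complementary and $\tau$-orthogonal. Since the intersection of a $\tau$-positive with a $\tau$-negative definite subspace is trivial, projecting one decomposition along the negative definite summand of the other embeds one positive definite summand into the other, and symmetrically, whence $\dim N_{+}=\dim H=\dim N_{-}$; in particular $\dim\ker(1-A^{\star})=\dim\ker(1+A^{\star})$. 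The step I expect to demand the most care is precisely this dimension bookkeeping: the isomorphism $\widehat\Gamma$ is purely algebraic and need not be bounded, so for infinite deficiency indices one must be careful in what sense ``$\tau$-definite subspace'' is being compared — this is the classical law of inertia for indefinite Hermitian forms, and in the finite-dimensional case relevant for the port-Hamiltonian applications it is immediate.
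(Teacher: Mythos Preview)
Your construction for ``$\Leftarrow$'' is essentially the paper's Proposition \ref{PROP-SUFF}: the paper takes $H=\ker(1-A^{\star})$ and a single isometric isomorphism $\varphi\colon\ker(1+A^{\star})\to\ker(1-A^{\star})$, which amounts to your choice $U_+=\id$, $U_-=\varphi$. The verification of (BT-1) and (BT-2) is identical.

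For ``$\Rightarrow$'' you take a genuinely different route from the paper, and your own caveat in the last paragraph points at a real gap. The paper does \emph{not} argue via inertia of the indefinite form. Instead it produces, for $K=-\id_H$, the skew-adjoint extension $B=\Psi(K)$ with $D(B)=\ker\Gamma_1$ (Lemma \ref{LEM-K-UNITARY}), proves the decomposition $D(-A^{\star})=\ker\Gamma_1\oplus\ker(1+\epsilon A^{\star})$ for $\epsilon=\pm1$ (Lemma \ref{LEM-xxx}), and then shows via the closed graph theorem that $\Gamma_1$ is continuous for the graph norm (Lemma \ref{LEM-CTNS}). Combined with the open mapping theorem this makes $\Gamma_1|_{\ker(1+\epsilon A^{\star})}\colon\ker(1+\epsilon A^{\star})\to H$ a Banach space isomorphism for both signs, whence the Hilbert dimensions agree.

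The problem with your inertia argument is exactly the one you flag: the isomorphism $\widehat\Gamma$ is only algebraic, so the images $\widehat\Gamma(N_{\pm})\subseteq H\times H$ need not be closed, and you do not know that they form a fundamental decomposition of the Krein space $(H\times H,\tau)$. Your projection argument (``$\tau$-positive meets $\tau$-negative trivially, hence project one into the other'') yields only algebraic linear injections in both directions, and therefore only equality of \emph{Hamel} dimensions. This is not enough: equality of Hamel dimensions of infinite-dimensional Hilbert spaces does not force equality of Hilbert dimensions (for instance, under CH, Hilbert spaces of orthonormal dimensions $\aleph_0$ and $\aleph_1$ have the same Hamel dimension $2^{\aleph_0}$). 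What rescues the inertia picture in the Krein-space literature is precisely a continuity input of the kind the paper supplies; without it the dimension bookkeeping does not close. In the finite-deficiency case your argument is of course complete, as you note.
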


\smallskip

Note that the spaces $\ker(1-A^{\star})$ and $\ker(1+A^{\star})$ are closed, cf.~the proof of \red{Lemma \ref{LEM}(vi)}. In the situation of symmetric operators the dimensions of the spaces corresponding to $\ker(1-A^{\star})$ and $\ker(1+A^{\star})$ are usually called the \textit{deficiency indices} of $A$. Below, we first prove that their equality is sufficient for the existence of a boundary triplet. This follows immediately from the decomposition established in Lemma \ref{DECOMP-LEM} by computation.

\smallskip

\begin{prop}\label{PROP-SUFF} Let $A$ be a densely defined, closed and skew-symmetric operator. If 
$\dim\ker(1-A^{\star})=\dim\ker(1+A^{\star})$ holds, then there exits a boundary triplet $(H,\Gamma_1,\Gamma_2)$ for $A$.
\end{prop}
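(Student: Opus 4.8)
The plan is to construct the triplet explicitly on the defect subspaces, taking Lemma~\ref{DECOMP-LEM} as the only structural input. Put $H:=\ker(1-A^{\star})$ with the inner product inherited from $X$. Since $\ker(1-A^{\star})$ and $\ker(1+A^{\star})$ are closed subspaces of $X$ (cf.\ the proof of Lemma~\ref{LEM}(vi)) of equal dimension, there is an isometric isomorphism $V\colon\ker(1+A^{\star})\to\ker(1-A^{\star})$. By Lemma~\ref{DECOMP-LEM} every $x\in D(-A^{\star})$ has a \emph{unique} decomposition $x=x_1+x_2+x_3$ with $x_1\in D(A)$, $x_2\in\ker(1-A^{\star})$, $x_3\in\ker(1+A^{\star})$, and each of the maps $x\mapsto x_i$ is linear; I then set
$$
\Gamma_1x:=x_2+Vx_3,\qquad\Gamma_2x:=x_2-Vx_3,
$$
which are linear maps $D(-A^{\star})\to H$.

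Condition (BT-2) will be immediate: given $y_1,y_2\in H$, the vector $x:=\tfrac12(y_1+y_2)+\tfrac12V^{-1}(y_1-y_2)$ lies in $\ker(1-A^{\star})\oplus\ker(1+A^{\star})\subseteq D(-A^{\star})$ and satisfies $\Gamma_1x=y_1$, $\Gamma_2x=y_2$. For (BT-1) I would first evaluate the right-hand side: expanding $\langle\Gamma_1x,\Gamma_2y\rangle_H+\langle\Gamma_2x,\Gamma_1y\rangle_H$ and using that $V$ is isometric, so $\langle Vx_3,Vy_3\rangle_H=\langle x_3,y_3\rangle_X$, the terms $\pm\langle x_2,Vy_3\rangle_H$ and $\pm\langle Vx_3,y_2\rangle_H$ cancel and one is left with $2\langle x_2,y_2\rangle_X-2\langle x_3,y_3\rangle_X$. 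It then remains to show that the left-hand side $\langle A^{\star}x,y\rangle_X+\langle x,A^{\star}y\rangle_X$ equals the same expression. Here I use Lemma~\ref{DECOMP-LEM} in the form $A^{\star}x=-Ax_1+x_2-x_3$ (and likewise for $y$), expand, and reduce each resulting summand with one of: $A\subseteq-A^{\star}$, i.e.\ $A^{\star}x_1=-Ax_1$; $A^{\star}x_2=x_2$; $A^{\star}x_3=-x_3$; skew-symmetry of $A$; and the adjoint relation $\langle Ax_1,w\rangle=\langle x_1,A^{\star}w\rangle$ for $w\in D(A^{\star})$. The $D(A)$--$D(A)$ contribution vanishes by skew-symmetry, every mixed contribution cancels against its partner, and $2\langle x_2,y_2\rangle_X-2\langle x_3,y_3\rangle_X$ survives; this establishes (BT-1).

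The one point requiring care is the bookkeeping in this last computation. Because the direct sum in Lemma~\ref{DECOMP-LEM} is \emph{not} orthogonal, the mixed inner products (the $D(A)$ part against a kernel, and the two kernels against each other) cannot simply be dropped; one must pair them off and verify that the signs produced by $-A^{\star}x=Ax_1-x_2+x_3$ make them cancel. Once the Green identity is verified, $(H,\Gamma_1,\Gamma_2)$ satisfies (BT-1) and (BT-2) and is the desired boundary triplet. If one prefers to record that $H$ may be taken abstract, one replaces $H=\ker(1-A^{\star})$ by any Hilbert space admitting unitaries $U_{\pm}$ from $\ker(1\mp A^{\star})$ onto it and puts $\Gamma_1x=U_+x_2+U_-x_3$, $\Gamma_2x=U_+x_2-U_-x_3$; the argument is unchanged.
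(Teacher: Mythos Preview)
Your construction is correct and is essentially identical to the paper's own proof: the paper also takes $H=\ker(1-A^{\star})$, an isometric isomorphism $\varphi\colon\ker(1+A^{\star})\to\ker(1-A^{\star})$, writes the projections $P_1,P_2$ from Lemma~\ref{DECOMP-LEM}, and sets $\Gamma_1=P_1+\varphi\circ P_2$, $\Gamma_2=P_1-\varphi\circ P_2$, which is exactly your $\Gamma_ix=x_2\pm Vx_3$. Your verification of (BT-2) matches the paper's, and your more explicit bookkeeping for (BT-1)---carefully pairing off the mixed terms using $A^{\star}x_2=x_2$, $A^{\star}x_3=-x_3$, and skew-symmetry---fills in what the paper abbreviates as ``a long but straightforward computation''.
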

\begin{proof} By Lemma \ref{DECOMP-LEM} we have $D(-A^{\star})=D(A)\oplus{}\ker(1-A^{\star})\oplus{} \ker(1+A^{\star})$ and we denote by
$$
P_1\colon D(-A^{\star})\rightarrow\ker(1-A^{\star}) \;\; \text{ and } \;\; P_2\colon D(-A^{\star})\rightarrow\ker(1+A^{\star})
$$
the corresponding projections. Since $\dim\ker(1-A^{\star})=\dim\ker(1+A^{\star})$, we find an isometric isomorphism $\varphi\colon\ker(1+A^{\star})\rightarrow\ker(1-A^{\star})$. We put $H=\ker(1-A^{\star})$ endowed with the inner product induced by $X$, define
$$
\Gamma_1=P_1+\varphi{}\circ P_2 \;\; \text{ and } \;\; \Gamma_2=P_1-\varphi{}\circ P_2
$$
and check the conditions in Definition \ref{dfn-BT}. Let $x, y\in D(-A^{\star})$ be given. According to the above we have $x=x_0+P_1x+P_2x$ and $y=y_0+P_1y+P_2y$ with $x_0$, $y_0\in D(A)$ and the equalities $A^{\star}(P_1x)=P_1x$ and $A^{\star}(P_2x)=-P_2x$. Using the latter, the fact that $A$ is skew-symmetric, and that $\varphi$ is isometric, a long but straightforward computation shows
\begin{eqnarray*}
\langle{}A^{\star}x,y\rangle{}_X+\langle{}x,A^{\star}y\rangle{}_X &=& \langle{}P_1x,P_1y\rangle{}_X-\langle{}P_2x,P_2y\rangle{}_X+\langle{}P_1x,P_1y\rangle{}_X-\langle{}P_2x,P_2y\rangle{}_X\\
&=&\langle{}P_1x+\varphi{}(P_2x),P_1y-\varphi{}(P_2y)\rangle{}_H+\langle{}P_1x-\varphi{}(P_2x),P_1y+\varphi{}(P_2y)\rangle{}_H \\
&=&\langle{}\Gamma_1x,\Gamma_2y\rangle{}_H+\langle{}\Gamma_2x,\Gamma_1y\rangle{}_H
\end{eqnarray*}
which proves (BT-1). Let $y_1$, $y_2\in H$ be given. We put $x=x_0+x_1+x_2\in D(-A^{\star})$ with $x_0=0\in D(A)$, $x_1=\frac{1}{2}(y_1+y_2)\in H=\ker(1-A^{\star})$ and $x_2=\frac{1}{2}\varphi^{-1}(y_1-y_2)\in\ker(1+A^{\star})$. We get $\Gamma_1x=x_1+\varphi{}(x_2)=\frac{1}{2}(y_1+y_2)+\frac{1}{2}(y_1-y_2)=y_1$ and similarly $\Gamma_2x=x_1-\varphi{}(x_2)=\frac{1}{2}(y_1+y_2)-\frac{1}{2}(y_1-y_2)=y_2$ which finishes the proof of (BT-2).
\end{proof}

\smallskip

The necessity part of Theorem \ref{EX-THM} uses the main result of Section \ref{SEC-4} in a special case and needs the following preparation.

\smallskip

\begin{lem}\label{LEM-K-UNITARY} Let $A$ be a densely defined, closed and skew-symmetric operator and let $(H,\Gamma_1,\Gamma_2)$ be a boundary triplet for $A$. We consider $K=-\id_H$. Then the operator $B=\Psi(K)$ satisfies $D(B)=\ker\Gamma_1$ and $B=-B^{\star}$.
\end{lem}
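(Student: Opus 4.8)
The plan is to first make the domain of $B=\Psi(-\id_H)$ explicit, and then to establish $B=-B^{\star}$ as a two-sided inclusion: $B\subseteq-B^{\star}$ (skew-symmetry) together with $D(B^{\star})\subseteq D(B)$.

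First I would unravel the definition \eqref{PSI}. Since $-\id_H$ is an isometry of $H$, it is everywhere defined and contractive, so Lemma \ref{LEM-2} applies: $B=\Psi(-\id_H)$ is a dissipative extension of $A$ with $B=-A^{\star}|_{D(B)}$ and $A\subseteq B\subseteq-A^{\star}$. For $K=-\id_H$ the defining condition $K(\Gamma_1x+\Gamma_2x)=\Gamma_1x-\Gamma_2x$ becomes $-(\Gamma_1x+\Gamma_2x)=\Gamma_1x-\Gamma_2x$, equivalently $\Gamma_1x=0$; hence $D(B)=\{x\in D(-A^{\star})\,;\,\Gamma_1x=0\}=\ker\Gamma_1$.

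Next I would check $B\subseteq-B^{\star}$, i.e.\ that $B$ is skew-symmetric. For $x,y\in\ker\Gamma_1$ the right-hand side of (BT-1) vanishes (each summand carries a factor $\Gamma_1x$ or $\Gamma_1y$), so $\langle A^{\star}x,y\rangle_X=-\langle x,A^{\star}y\rangle_X$; rewriting via $B=-A^{\star}$ gives $\langle Bx,y\rangle_X=-\langle x,By\rangle_X$, as needed.

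The main work, and the step I expect to be the real obstacle, is the reverse inclusion $D(B^{\star})\subseteq D(B)=\ker\Gamma_1$. For $z\in D(B^{\star})$, I would first use that $A\subseteq B$ forces $B^{\star}\subseteq A^{\star}$, hence $z\in D(A^{\star})=D(-A^{\star})$ and $B^{\star}z=A^{\star}z$ — this is what makes $\Gamma_1z$ meaningful in the first place. Then, for every $x\in\ker\Gamma_1$, the chain $\langle -A^{\star}x,z\rangle_X=\langle Bx,z\rangle_X=\langle x,B^{\star}z\rangle_X=\langle x,A^{\star}z\rangle_X$ yields $\langle A^{\star}x,z\rangle_X+\langle x,A^{\star}z\rangle_X=0$; inserting $(x,z)$ into (BT-1) and using $\Gamma_1x=0$ reduces this to $\langle\Gamma_2x,\Gamma_1z\rangle_H=0$ for all $x\in\ker\Gamma_1$. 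Finally, (BT-2) applied to the boundary data $(0,y_2)$ shows $\Gamma_2(\ker\Gamma_1)=H$, so $\Gamma_1z=0$ and $z\in D(B)$. Combining the two inclusions yields $D(B)=D(B^{\star})$ and $-B^{\star}=B$; note that no appeal to the Lumer-Phillips theorem or to closedness of $B$ is needed, since $-B^{\star}$ is automatically closed.
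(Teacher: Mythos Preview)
Your proof is correct and follows the same overall strategy as the paper: identify $D(B)=\ker\Gamma_1$ from the definition of $\Psi$, then use the abstract Green identity (BT-1) to pin down $D(B^{\star})$ inside $D(-A^{\star})$.

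There is one noteworthy difference. To obtain the surjectivity $\Gamma_2(\ker\Gamma_1)=H$ (equivalently, that $\{\Gamma_1x+\Gamma_2x:x\in D(B)\}$ fills all of $H$), the paper invokes the machinery of Section~\ref{SEC-4}: it cites Proposition~\ref{PROP-3-NEU} for maximal dissipativity of $B$ and then Proposition~\ref{PROP-2} for $D(\Phi(\Psi(K)))=D(K)=H$. You instead read this off directly from (BT-2) applied to the boundary data $(0,y_2)$, which is shorter and avoids the $\Phi$--$\Psi$ bijection altogether. Your argument also separates the skew-symmetry $B\subseteq-B^{\star}$ as an independent one-line consequence of (BT-1), whereas the paper obtains both inclusions at once from its explicit description of $D(-B^{\star})$. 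The trade-off is that the paper's route makes visible why $K=-\id_H$ is the natural choice (it is the contraction whose associated extension has full boundary data $H$), while your route is more self-contained and, as you note, does not need closedness of $B$ or the Lumer--Phillips theorem.
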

\begin{proof} We first show that $D(B)=\ker\Gamma_1$ is true.

\smallskip

\textquotedblleft{}$\subseteq$\textquotedblright{} Let $x\in D(B)$. That is 
$$
\Gamma_1x\red{-}\Gamma_2x=K(\Gamma_1x\red{+}\Gamma_2x)=K(\Gamma_1x)\red{+}K(\Gamma_2x)=-\Gamma_1x\red{-}\Gamma_2x.
$$
Consequently, $\Gamma_1x=-\Gamma_1x$ which implies $x\in\ker\Gamma_1$.

\smallskip

\textquotedblleft{}$\supseteq$\textquotedblright{} Let $x\in\ker\Gamma_1$, i.e., $\Gamma_1x=0$ and thus $K(\Gamma_1x\red{+}\Gamma_2x)=K(\Gamma_2x) = 0\red{+}(-\Gamma_2x)=\Gamma_1x\red{-}\Gamma_2x$, which means $x\in D(B)$.

\smallskip

Now we show $B=-B^{\star}$. By \red{Proposition \ref{PROP-3-NEU}} we know that $B$ is a maximal dissipative extension of $A$ and thus \red{Lemma \ref{LEM-3}} gives $A\subseteq B\subseteq-A^{\star}$. Therefore $(-A^{\star})^{\star}\subseteq B^{\star}\subseteq A^{\star}$ holds. Since $A$ is closed we may use \cite[X.1.6]{C} to get that $(-A^{\star})^{\star}=A$ holds. This implies finally $A\subseteq -B^{\star}\subseteq -A^{\star}$.

\smallskip

It remains to show that $D(-B^{\star})=D(B)$ holds. We know already that both sets are contained in $D(-A^{\star})$ and thus we fix $y\in D(-A^{\star})$. In view of $B^{\star}\subseteq A^{\star}$ we get that $y\in D(-B^{\star})$ holds if and only if $\langle{}Bx,y\rangle{}_X = \langle{}x,A^{\star}y\rangle{}_X$ is true for every $x\in D(B)$. \red{Using $B\subseteq-A^{\star}$ we can replace} the equality in the statement with
$\langle{}-A^{\star}x,y\rangle{}_X = \langle{}x,A^{\star}y\rangle{}_X$. With (BT-1) the latter means exactly $\langle{}\Gamma_1x,\Gamma_2y\rangle{}_H+\langle{}\Gamma_2x,\Gamma_1y\rangle{}_H=0$. We thus have
\begin{eqnarray*}
D(-B^{\star}) &=& \bigl\{y\in D(-A^{\star})\:;\:\forall\:x\in D(B)\colon\langle{}\Gamma_1x,\Gamma_2y\rangle{}_H+\langle{}\Gamma_2x,\Gamma_1y\rangle{}_H=0\bigr\}\\
& = & \bigl\{y\in D(-A^{\star})\:;\:\forall\:x\in D(B)\colon\langle{}\Gamma_1x\red{+}\Gamma_2x,\Gamma_1y\rangle{}_H=0\bigr\}\\
& = &  \bigl\{y\in D(-A^{\star})\:;\:\forall\:h\in H\colon\langle{}h,\Gamma_1y\rangle{}_H=0\bigr\}\\
& = & \ker\Gamma_1,
\end{eqnarray*}
where we used $D(B)=\ker\Gamma_1$ and the fact that $H=D(\Psi(B))=D(K)$, which follows from Proposition \ref{PROP-2}. Thus we have $D(B)=\ker\Gamma_1=D(-B^{\star})$.
\end{proof}

\smallskip

We note that in the literature an operator $B$ with $B=-B^{\star}$ is said to be \textit{skew-adjoint}.

\smallskip

\begin{lem}\label{LEM-xxx} Let $A$ be a densely defined, closed and skew-symmetric operator. Let $B\colon D(B)\rightarrow X$ be a maximal dissipative extension. Assume that $B=-B^{\star}$ holds. Then $D(-A^{\star})=D(B)\oplus{}\ker(1+\epsilon{}A^{\star})$ holds for $\epsilon=\pm1$. \red{The direct sum is not necessarily orthogonal.}
\end{lem}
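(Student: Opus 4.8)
The plan is to produce the decomposition explicitly by inverting the shifted operator $1-\epsilon B$ on $D(B)$. First I would record that both summands lie in $D(-A^{\star})$: since $B$ is a dissipative extension of $A$, Proposition \ref{PROP-1} gives $B\subseteq-A^{\star}$ and hence $D(B)\subseteq D(-A^{\star})$, while $\ker(1+\epsilon A^{\star})\subseteq D(A^{\star})=D(-A^{\star})$ trivially. I would also note the elementary identity that for $x\in D(B)$ one has $Bx=-A^{\star}x$, so that $(1-\epsilon B)x=(1+\epsilon A^{\star})x$; in other words $1-\epsilon B$ and the restriction of $1+\epsilon A^{\star}$ to $D(B)$ are one and the same operator.

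The crucial step is that $1-\epsilon B\colon D(B)\to X$ is bijective for $\epsilon=\pm1$, and this is where the hypothesis $B=-B^{\star}$ is indispensable. From $B=-B^{\star}$ one gets $\langle{}Bx,x\rangle{}=\langle{}x,B^{\star}x\rangle{}=-\overline{\langle{}Bx,x\rangle{}}$, hence $\Re\langle{}Bx,x\rangle{}=0$ for every $x\in D(B)$; consequently $-B$ is closed (Lemma \ref{LEM}(ii)) and dissipative, exactly as $B$ is. Applying Lemma \ref{LEM}(iv),(v),(vi) to $B$ and to $-B$: $1-\epsilon B$ is injective with closed range, and $\ran(1-\epsilon B)^{\perp}=\ker(1-\epsilon B^{\star})=\ker(1+\epsilon B)$ by $B^{\star}=-B$; the latter kernel is $\{0\}$ because $-\epsilon B$ is dissipative (Lemma \ref{LEM}(iv) once more), so $1-\epsilon B$ is onto as well. (For $\epsilon=+1$ one may instead simply invoke the generation theorem, Theorem \ref{Lumer-Phillips}.) I expect assembling this bijectivity statement --- in particular handling the case $\epsilon=-1$, where $B=-B^{\star}$ is genuinely needed --- to be the only real obstacle; the rest is formal.

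With the bijection in hand the assertion follows at once. Given $x\in D(-A^{\star})$, let $x_B\in D(B)$ be the unique solution of $(1-\epsilon B)x_B=(1+\epsilon A^{\star})x$; by the identity from the first paragraph $(1+\epsilon A^{\star})x_B=(1+\epsilon A^{\star})x$, so $x-x_B\in\ker(1+\epsilon A^{\star})$ and $x=x_B+(x-x_B)$, which proves $D(-A^{\star})=D(B)+\ker(1+\epsilon A^{\star})$. For directness, if $x\in D(B)\cap\ker(1+\epsilon A^{\star})$ then $(1-\epsilon B)x=(1+\epsilon A^{\star})x=0$, whence $x=0$ by injectivity of $1-\epsilon B$; equivalently $Bx=\epsilon x$ together with $\Re\langle{}Bx,x\rangle{}=0$ forces $\|x\|^{2}=0$. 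Finally I would remark that $D(B)$ and $\ker(1+\epsilon A^{\star})$ are only algebraic complements in $D(-A^{\star})$ --- there is no reason for them to be mutually orthogonal --- which accounts for the parenthetical caveat in the statement.
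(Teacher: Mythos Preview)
Your proof is correct and rests on the same core fact as the paper's: the bijectivity of $1-\epsilon B\colon D(B)\to X$, established from $B=-B^{\star}$ via $\Re\langle Bx,x\rangle=0$ and Lemma~\ref{LEM}(iv)--(vi) (the paper phrases this through Lemma~\ref{SIMPLE-DECOMP}). The directness argument is also the same.

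Where your argument differs is in the spanning step. You go directly: for $x\in D(-A^{\star})$ set $x_B=(1-\epsilon B)^{-1}(1+\epsilon A^{\star})x\in D(B)$ and use the identity $(1-\epsilon B)|_{D(B)}=(1+\epsilon A^{\star})|_{D(B)}$ to conclude $x-x_B\in\ker(1+\epsilon A^{\star})$. The paper instead first decomposes $(1+\epsilon A^{\star})x$ according to $X=\ran(1-\epsilon A)\oplus\ker(1-\epsilon A^{\star})$ (Lemma~\ref{SIMPLE-DECOMP}), writing $(1+\epsilon A^{\star})x=(1-\epsilon A)x_0+y_1$, and then takes $x_0+(1-\epsilon B)^{-1}y_1$ as the $D(B)$-component. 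This intermediate passage through $A$ is in fact unnecessary: once $1-\epsilon B$ is known to be onto, your one-line inversion suffices, and the paper's computation that the remainder lies in $\ker(1+\epsilon A^{\star})$ uses exactly the same identity you isolated at the outset. Your version is a genuine streamlining of the paper's argument.
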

\begin{proof} Let $\epsilon=\pm1$. We have $\ker(1+\epsilon{}A^{\star})\subseteq D(-A^{\star})$ and $D(B)\subseteq D(-A^{\star})$. We observe that $\Re\langle{}Bx,x\rangle{}_X=0$ holds for every $x\in D(B)$ since $B=-B^{\star}$.

\smallskip

Let $x\in D(B)\cap\ker(1+\epsilon{}A^{\star})$, i.e., $A^{\star}x=-\epsilon{}x$ and $Bx=-A^{\star}x=\epsilon{}x$. We get
$$
0=\Re\langle{}Bx,x\rangle{}_X=\epsilon\Re\langle{}x,x\rangle{}_X=\epsilon\|x\|^2_X
$$
and therefore $x=0$. Thus we showed that the sum $D(B)+\ker(1+\epsilon{}A^{\star})$ is direct.

\smallskip

It remains to prove $D(-A^{\star})=D(B)+\ker(1+A^{\star})$.

\smallskip

\textquotedblleft{}$\supseteq$\textquotedblright{} Since $B$ is dissipative we have $B\subseteq -A^{\star}$ by Proposition \ref{PROP-1}.

\smallskip

\textquotedblleft{}$\subseteq$\textquotedblright{} We first show that $1-\epsilon{}B\colon D(B)\rightarrow X$ is bijective. Let $x\in D(B)$ be such that $(1-\epsilon{}B)x=0$, i.e., $Bx=\epsilon{}x$. Then $0=\Re\langle{}Bx,x\rangle{}_X=\epsilon{}\|x\|_X^2$ implies $x=0$. \red{Therefore $1-\epsilon{}B$ is injective.} From Lemma \ref{SIMPLE-DECOMP} we get $X=\ran(1-\epsilon{}B)\oplus{}\ker(1-\epsilon{}B^{\star})$ where \red{$\ker(1-\epsilon{}B^{\star})=\ker(1+\epsilon{}B)=\{0\}$}. Thus, $1-\epsilon{}B\colon D(B)\rightarrow X$ is also surjective.

\smallskip

Now we take $x\in D(-A^{\star})$ and decompose $(1+\epsilon{}A^{\star})x$ according to
$$
X=\ran(1-\epsilon{}A)\oplus{}\ker(1-\epsilon{}A^{\star}),
$$
cf.~Lemma \ref{SIMPLE-DECOMP}. Accordingly, we find $x_0\in D(A)$ and $y_1\in\ker(1-\epsilon{}A^{\star})$ such that $(1+\epsilon{}A^{\star})x=(1-\epsilon{}A)x_0+y_1$. We put $y_2=x-x_0-(1-\epsilon{}B)^{-1}y_1$ and compute
\begin{eqnarray*}
(1+\epsilon{}A^{\star})y_2 & = & (1+\epsilon{}A^{\star})x - (1+\epsilon{}A^{\star})x_0 - (1+\epsilon{}A^{\star})(1-\epsilon{}B)^{-1}y_1\\
& = & (1+\epsilon{}A^{\star})x - (1-\epsilon{}A)x_0 - (1-\epsilon{}B)(1-\epsilon{}B)^{-1}y_1\\
& = & y_1 - y_1 \; = \; 0
\end{eqnarray*}
that is $y_2\in\ker(1+\epsilon{}A^{\star})$. On the other hand, $x_0\in D(A)\subseteq D(B)$ and $(1-\epsilon{}B)^{-1}y_1\in D(B)$ imply $x_0+(1-\epsilon{}B)^{-1}y_1\in D(B)$ and thus
$$
x=x_0+(1-\epsilon{}B)^{-1}y_1+y_2\in D(B)+\ker(1+\epsilon{}A^{\star})
$$
as desired.
\end{proof}

\smallskip

The last ingredient for the proof of the necessity part is the following.

\smallskip

\begin{lem}\label{LEM-CTNS} Let $A$ be a densely defined, closed and skew-symmetric operator and let $(H,\Gamma_1,\Gamma_2)$ be a boundary triplet for $A$. Then the map $\Gamma_1\colon D(-A^{\star})\rightarrow H$ is continuous when $D(-A^{\star})$ is endowed with the graph norm $\|\cdot\|_{D(-A^{\star})}$ defined by $\|x\|_{D(-A^{\star})}:=\|x\|_X+\|-A^{\star}x\|_X$ for $x\in D(-A^{\star})$.
\end{lem}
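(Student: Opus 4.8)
The plan is to invoke the closed graph theorem. The space $D(-A^{\star})$ equipped with the graph norm $\|\cdot\|_{D(-A^{\star})}$ is a Banach space, because $A^{\star}$ is always closed (as the adjoint of a densely defined operator), hence so is $-A^{\star}$, and a closed operator has a complete graph norm. So it suffices to check that the linear map $\Gamma_1\colon (D(-A^{\star}),\|\cdot\|_{D(-A^{\star})})\to H$ has closed graph, i.e., that whenever $x_n\to x$ in the graph norm and $\Gamma_1x_n\to h$ in $H$, then $\Gamma_1x=h$.

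To verify closedness of the graph I would use the abstract Green's identity (BT-1) to pin down $\Gamma_1x$ by testing against a suitable $y$. Fix an arbitrary $y\in D(-A^{\star})$. Graph-norm convergence $x_n\to x$ means $x_n\to x$ in $X$ and $A^{\star}x_n\to A^{\star}x$ in $X$, so by continuity of the inner product the left-hand side of (BT-1) applied to the pair $(x_n,y)$ converges: $\langle A^{\star}x_n,y\rangle_X+\langle x_n,A^{\star}y\rangle_X\to \langle A^{\star}x,y\rangle_X+\langle x,A^{\star}y\rangle_X$. On the right-hand side, (BT-1) gives $\langle\Gamma_1x_n,\Gamma_2y\rangle_H+\langle\Gamma_2x_n,\Gamma_1y\rangle_H$; the first term converges to $\langle h,\Gamma_2y\rangle_H$ since $\Gamma_1x_n\to h$, but the second term involves $\Gamma_2x_n$, which is not yet controlled. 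To handle this, I would first apply the same argument with the roles adapted so as to also get convergence of $\Gamma_2x_n$: more efficiently, observe that by (BT-2) one can choose, for the given $y$, an element $y'\in D(-A^{\star})$ with $\Gamma_1y'=\Gamma_2y$ and $\Gamma_2y'=0$, so that testing $(x_n,y')$ in (BT-1) yields $\langle\Gamma_1x_n,0\rangle_H+\langle\Gamma_2x_n,\Gamma_2y\rangle_H = \langle A^{\star}x_n,y'\rangle_X+\langle x_n,A^{\star}y'\rangle_X$, whose right side converges; hence $\langle\Gamma_2x_n,\Gamma_2y\rangle_H$ converges. Since $y$ was arbitrary and $\Gamma_2$ is surjective by (BT-2), $\langle\Gamma_2x_n,k\rangle_H$ converges for every $k\in H$; combined with the already-established convergence of $\langle\Gamma_1x_n,\cdot\rangle_H$, passing to the limit in (BT-1) for the pair $(x_n,y)$ gives
$$
\langle A^{\star}x,y\rangle_X+\langle x,A^{\star}y\rangle_X=\langle h,\Gamma_2y\rangle_H+\lim_n\langle\Gamma_2x_n,\Gamma_1y\rangle_H .
$$
Comparing with (BT-1) for $(x,y)$, namely $\langle A^{\star}x,y\rangle_X+\langle x,A^{\star}y\rangle_X=\langle\Gamma_1x,\Gamma_2y\rangle_H+\langle\Gamma_2x,\Gamma_1y\rangle_H$, and using the separately obtained weak convergence $\Gamma_2x_n\rightharpoonup\Gamma_2x$ (the limit of $\langle\Gamma_2x_n,k\rangle_H$ must be $\langle\Gamma_2x,k\rangle_H$, again by testing against $y'$ with $\Gamma_1y'=k$, $\Gamma_2y'=0$ and using continuity of the inner product on the left), we conclude $\langle\Gamma_1x-h,\Gamma_2y\rangle_H=0$ for all $y\in D(-A^{\star})$. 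By surjectivity of $\Gamma_2$ (condition (BT-2)) this forces $\Gamma_1x=h$, so the graph of $\Gamma_1$ is closed and the closed graph theorem yields continuity.

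The main obstacle is that (BT-1) couples $\Gamma_1$ and $\Gamma_2$, so one cannot isolate $\Gamma_1$ directly; the key trick is to exploit the surjectivity in (BT-2) to choose test elements $y'$ with prescribed boundary values (in particular with $\Gamma_2y'=0$, decoupling the two terms), which lets one bootstrap control of $\langle\Gamma_2x_n,\cdot\rangle_H$ from the graph-norm convergence on the $X$-side. Everything else — completeness of the graph norm, continuity of the inner product, and the final application of the closed graph theorem — is routine.
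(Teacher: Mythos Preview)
Your proof is correct and uses the same core ingredients as the paper: the closed graph theorem, the abstract Green identity (BT-1), and the surjectivity (BT-2) to manufacture test elements with prescribed boundary values.

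The paper's execution is cleaner in one respect worth noting. Instead of proving continuity of $\Gamma_1$ alone, the paper applies the closed graph theorem to the \emph{pair} map $\Gamma\colon D(-A^{\star})\to H\times H$, $x\mapsto(\Gamma_1x,\Gamma_2x)$, and then projects onto the first factor. With this setup one assumes $x_n\to 0$ in graph norm together with $\Gamma_1x_n\to y_1$ \emph{and} $\Gamma_2x_n\to y_2$; passing to the limit in (BT-1) immediately gives $\langle y_1,\Gamma_2y\rangle_H+\langle y_2,\Gamma_1y\rangle_H=0$ for all $y$, and two choices of $y$ via (BT-2) yield $y_1=y_2=0$. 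This sidesteps the coupling problem you identified and eliminates the need for your weak-convergence bootstrap of $\Gamma_2x_n$. Your route works, but treating $\Gamma_1$ and $\Gamma_2$ jointly is the natural way to dissolve the obstacle rather than work around it.
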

\begin{proof} We show that the map
$$
\Gamma\colon(D(-A^{\star}),\|\cdot\|_{D(-A^{\star})})\rightarrow H\times{}H,\,x\mapsto (\Gamma_1x,\Gamma_2x)
$$
is continuous where we use the product topology on $H\times{}H$. We apply the version \cite[III.12.7]{C} of the closed graph theorem to show the latter. Let $(x_n)_{n\in\mathbb{N}}\subseteq D(-A^{\star})$ be given with $x_n\rightarrow 0\in D(-A^{\star})$ with respect to $\|\cdot\|_{D(-A^{\star})}$ and $\Gamma_1x_n\rightarrow y_1$, $\Gamma_2x_n\rightarrow y_2$ for $n\rightarrow\infty$ with $y_1$, $y_2\in H$. Then, for arbitrary $y\in D(-A^{\star})$, we can compute
\begin{eqnarray*}
\langle{}y_1,\Gamma_2y\rangle{}_H + \langle{}y_2,\Gamma_1y\rangle{}_H &=& \lim_{n\rightarrow\infty}\bigl[\langle{}\Gamma_1x_n,\Gamma_2y\rangle{}_H + \langle{}\Gamma_2x_n,\Gamma_1y\rangle{}_H\bigr]\\
&=& \lim_{n\rightarrow\infty}\bigl[\langle{}A^{\star}x_n,y\rangle{}_X + \langle{}x_n,A^{\star}y\rangle{}_X\bigr]\\
&=& \langle{}\lim_{n\rightarrow\infty}A^{\star}x_n,y\rangle{}_X + \langle{}\lim_{n\rightarrow\infty}x_n,A^{\star}y\rangle{}_X\\
&=& 0
\end{eqnarray*}
since $A^{\star}\colon (D(-A^{\star}),\|\cdot\|_{D(-A^{\star})})\rightarrow X$ is continuous. According to (BT-2) we select $y\in D(-A^{\star})$ such that $\Gamma_1y=0$ and $\Gamma_2y=y_1$. Then we get from the above
$$
0=\langle{}y_1,\Gamma_2y\rangle{}_H + \langle{}y_2,\Gamma_1y\rangle{}_H=\|y_1\|^2_X 
$$
and thus $y_1=0$. Analogously, it \red{follows that} $y_2=0$ if we select $y\in D(-A^{\star})$ such that $\Gamma_1y=y_2$, $\Gamma_2y=0$ and use the formula above. Thus, $\Gamma$ is continuous and \red{hence} also $\Gamma_1$ as this map equals the composition of $\Gamma$ and the projection on the first component of $H\times{}H$.
\end{proof}

\smallskip

The necessity part of Theorem \ref{EX-THM} is exactly the statement below.

\smallskip

\begin{prop} Let $A$ be a densely defined, closed and skew-symmetric operator and let $(H,\Gamma_1,\Gamma_2)$ be a boundary triplet for $A$. Then $\dim\ker(1-A^{\star})=\dim\ker(1+A^{\star})$ holds.
\end{prop}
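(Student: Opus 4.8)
The plan is to manufacture, from the boundary triplet, a concrete skew-adjoint maximal dissipative extension of $A$ and then to exhibit $\ker(1-A^{\star})$ and $\ker(1+A^{\star})$ as two complements of one and the same subspace of $D(-A^{\star})$; any two such complements are isomorphic, which will give the desired equality of dimensions. Concretely, I would take $K=-\id_H$, which is a contraction on $H$, so that Proposition \ref{PROP-3-NEU} provides a maximal dissipative extension $B:=\Psi(K)$ of $A$. By Lemma \ref{LEM-K-UNITARY} this $B$ satisfies $D(B)=\ker\Gamma_1$ and $B=-B^{\star}$, so $B$ meets the hypotheses of Lemma \ref{LEM-xxx}.

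Applying Lemma \ref{LEM-xxx} to $B$ with $\epsilon=+1$ and with $\epsilon=-1$, and using $D(B)=\ker\Gamma_1$, I obtain
$$
D(-A^{\star})=\ker\Gamma_1\oplus\ker(1+A^{\star})=\ker\Gamma_1\oplus\ker(1-A^{\star}).
$$
Let $P\colon D(-A^{\star})\to\ker(1+A^{\star})$ be the projection associated with the first decomposition, so $\ker P=\ker\Gamma_1$. Then I would check that the restriction $P|_{\ker(1-A^{\star})}$ is a linear bijection onto $\ker(1+A^{\star})$: injectivity holds because $\ker\Gamma_1\cap\ker(1-A^{\star})=\{0\}$ by the second decomposition, and surjectivity because any $y\in\ker(1+A^{\star})$, written as $y=u+v$ with $u\in\ker\Gamma_1$ and $v\in\ker(1-A^{\star})$, satisfies $Pv=Py=y$.

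To conclude equality of the Hilbert space dimensions, I would equip $D(-A^{\star})$ with the graph norm, which makes it a Hilbert space because $A^{\star}$ is closed. The maps $1\pm A^{\star}\colon D(-A^{\star})\to X$ are then bounded, so $\ker(1\pm A^{\star})$ are closed, and Lemma \ref{LEM-CTNS} (whose proof actually yields continuity of $x\mapsto(\Gamma_1x,\Gamma_2x)$) shows $\ker\Gamma_1$ is closed too. Hence both decompositions above are topological, $P$ is bounded, and $P|_{\ker(1-A^{\star})}$ is a bounded linear bijection of Hilbert spaces, thus a topological isomorphism by the open mapping theorem; since on $\ker(1\pm A^{\star})$ the graph norm equals $2\|\cdot\|_X$, this is a topological isomorphism between $\ker(1-A^{\star})$ and $\ker(1+A^{\star})$ in their $X$-inner products, and topologically isomorphic Hilbert spaces have orthonormal bases of the same cardinality. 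Therefore $\dim\ker(1-A^{\star})=\dim\ker(1+A^{\star})$.

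The genuinely routine part is the bijectivity verification; the step that needs some care is the very last one, the transfer from an algebraic isomorphism to equality of Hilbert dimensions, which requires the observation that a bounded linear bijection of Hilbert spaces preserves the cardinality of an orthonormal basis (equivalently the density character). I expect no further obstacle: Lemmas \ref{LEM-K-UNITARY}, \ref{LEM-xxx} and \ref{LEM-CTNS} have been arranged precisely so that this argument closes.
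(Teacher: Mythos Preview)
Your argument is correct and shares its skeleton with the paper's proof: both take $K=-\id_H$, invoke Lemma~\ref{LEM-K-UNITARY} to get $B=\Psi(K)$ with $D(B)=\ker\Gamma_1$ and $B=-B^{\star}$, and then apply Lemma~\ref{LEM-xxx} with $\epsilon=\pm1$ to obtain the two decompositions
$$
D(-A^{\star})=\ker\Gamma_1\oplus\ker(1+A^{\star})=\ker\Gamma_1\oplus\ker(1-A^{\star}).
$$

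The divergence is only in the final step. You compare the two complements of $\ker\Gamma_1$ directly, via the projection $P$ onto $\ker(1+A^{\star})$ restricted to $\ker(1-A^{\star})$, and then argue topologically using the graph norm, the closedness of the summands, and the open mapping theorem. The paper instead uses $\Gamma_1$ itself: from the decomposition and (BT-2) it observes that $\Gamma_1|_{\ker(1+\epsilon A^{\star})}\colon\ker(1+\epsilon A^{\star})\to H$ is bijective for each $\epsilon$, and then Lemma~\ref{LEM-CTNS} plus the open mapping theorem upgrade this to a Banach space isomorphism. Thus the paper routes both kernels through the auxiliary space $H$, obtaining as a by-product $\dim H=\dim\ker(1\pm A^{\star})$; your route bypasses $H$ and links the kernels to each other directly. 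Your version requires the extra (standard) remark that an algebraic direct sum of closed subspaces of a Banach space is automatically topological so that $P$ is bounded, whereas the paper only needs continuity of $\Gamma_1$. Either way the conclusion follows.
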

\begin{proof} We put $K=-\id_H$. By Lemma \ref{LEM-K-UNITARY}\red{,} $B=\Psi(K)$ satisfies $B=-B^{\star}$ and $D(B)=\ker\Gamma_1$. We fix $\epsilon=\pm1$ and use Lemma \ref{LEM-xxx} to derive
$$
D(-A^{\star})=D(B)\oplus\ker(1+\red{\epsilon}A^{\star})=\ker\Gamma_1\oplus\ker(1+\epsilon{}A^{\star}).
$$
By (BT-2) the map $\Gamma_1\colon\ker\Gamma_1\oplus\ker(1+\epsilon{}A^{\star})\rightarrow H$ is surjective and in view of the decomposition this implies immediately that $\Gamma_1|_{\ker(1+\epsilon{}A^{\star})}\colon\ker(1+\epsilon{}A^{\star})\rightarrow H$ is bijective.

\smallskip

Since $A^{\star}$ is closed, $\ker(1+\epsilon{}A^{\star})\subseteq X$ is closed and thus a Banach space w.r.t.~the norm $\|\cdot\|_X$. Since
$$
\|x\|_X\leqslant\|x\|_X+\|-A^{\star}x\|_X=\|x\|_X+\|\epsilon{}A^{\star}x\|_X=2\|x\|_X
$$
holds for $x\in\ker(1+\epsilon{}A^{\star})$, the norm $\|\cdot\|_X$ and the graph norm $\|\cdot\|_{D(-A^{\star})}$, see Lemma \ref{LEM-CTNS}, are equivalent on $\ker(1+\epsilon{}A^{\star})$. Lemma \ref{LEM-CTNS} thus shows that \red{$\Gamma_1|_{\ker(1+\epsilon{}A^{\star})}\colon\ker(1+\epsilon{}A^{\star})\rightarrow H$} is continuous. The open mapping theorem yields that the latter is an isomorphism.
\end{proof}

\bigskip


\section{Port-Hamiltonian Systems}\label{SEC-6}

\smallskip

In this section we apply the results of Section \ref{SEC-4} to port-Hamiltonian systems. We follow Le Gorrec, Zwart, Maschke \cite{GZM2005} but add some more details and are able to remove some technicalities due to our relation-free and throughout skew-symmetric approach above.

\smallskip

Let $d\geqslant1$ be a fixed integer and $a<b$ be real numbers. We consider the partial differential equation
$$
{\textstyle\frac{\partial x}{\partial t}}(t,\xi)=P_0(\mathcal{H}x)(t,\xi)+P_1{\textstyle\frac{\partial(\mathcal{H}x)}{\partial \xi}}(t,\xi)\:\text{ for }\:t\geqslant0\:\text{ and }\:\xi\in(a,b)
$$
where $P_0$, $P_1\in\mathbb{C}^{d\times d}$ with $P_0^{\star}=-P_0$ and $P_1^{\star}=P_1$ invertible. We assume that $\mathcal{H}\colon(a,b)\rightarrow\mathbb{C}^{d\times d}$ is measurable and that there exists $0<m\leqslant M$ \red{such that} for almost every $\xi\in(a,b)$ the matrix $\mathcal{H}(\xi)$ is self-adjoint and that we have
$$
m|\zeta|^2\leqslant\langle{}\mathcal{H}(\xi)\zeta,\zeta\rangle{}_{\mathbb{C}^d}\leqslant M|\zeta|^2
$$
for all $\zeta\in\mathbb{C}^d$ where $\langle{}\cdot,\cdot\rangle{}_{\mathbb{C}^d}$ denotes the inner product of the euclidean space $\mathbb{C}^d$. In the sequel, we write $L^2$ for the space $L^2(a,b;\mathbb{C}^d)$ with the standard inner product $\langle{}\cdot,\cdot\rangle{}_{L^2}$ which is linear in the first argument. We use the letter $\mathcal{H}$ for the operator $\mathcal{H}\colon L^2\rightarrow L^2$, $(\mathcal{H}x)(\xi)=\mathcal{H}(\xi)x(\xi)$ for $\xi\in(a,b)$, which is a self-adjoint operator.

\smallskip

\begin{lem}\label{H-LEM} Let $\mathcal{H}$ be as above.\vspace{3pt}
\begin{compactitem}
\item[(i)] The space $(X,\langle{}\cdot,\cdot\rangle{}_X)$ with $X=L^2$ and $\langle{}\cdot,\cdot\rangle{}_X=\langle{}\mathcal{H}\cdot,\cdot\rangle{}_{L^2}$ is a Hilbert space.\vspace{3pt}
\item[(ii)] The operator $\mathcal{H}\colon X\rightarrow L^2$ is an isomorphism of Banach spaces.
\end{compactitem}
\end{lem}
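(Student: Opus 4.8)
The plan is to derive everything from the two-sided pointwise estimate on $\mathcal{H}(\xi)$, which translates into an equivalence between $\|\cdot\|_X$ and the standard $L^2$-norm, and then to exhibit the inverse of $\mathcal{H}$ explicitly by inverting pointwise.

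For (i), I would first check that $\langle\cdot,\cdot\rangle_X=\langle\mathcal{H}\cdot,\cdot\rangle_{L^2}$ is a genuine inner product. Sesquilinearity is immediate from linearity of $\mathcal{H}$ and of $\langle\cdot,\cdot\rangle_{L^2}$, and conjugate symmetry follows from $\mathcal{H}=\mathcal{H}^\star$ on $L^2$, i.e.\ $\langle\mathcal{H}x,y\rangle_{L^2}=\langle x,\mathcal{H}y\rangle_{L^2}=\overline{\langle\mathcal{H}y,x\rangle_{L^2}}$. Positive definiteness is where the lower bound enters: writing the $L^2$-inner product as an integral over $(a,b)$ and inserting the pointwise estimate $m|\zeta|^2\leqslant\langle\mathcal{H}(\xi)\zeta,\zeta\rangle_{\mathbb{C}^d}\leqslant M|\zeta|^2$ gives
$$
m\|x\|_{L^2}^2\leqslant\langle\mathcal{H}x,x\rangle_{L^2}=\|x\|_X^2\leqslant M\|x\|_{L^2}^2
$$
for all $x\in L^2$, so $\|x\|_X=0$ forces $x=0$, and at the same time $\|\cdot\|_X$ is equivalent to $\|\cdot\|_{L^2}$. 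Since $(L^2,\|\cdot\|_{L^2})$ is complete and $\|\cdot\|_X$ is an equivalent norm on the same vector space, $(X,\langle\cdot,\cdot\rangle_X)$ is complete, hence a Hilbert space.

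For (ii), I would first record that $\mathcal{H}\colon X\to L^2$ is bounded: each self-adjoint matrix $\mathcal{H}(\xi)$ has operator norm $\leqslant M$ by the upper bound, so $\|\mathcal{H}x\|_{L^2}\leqslant M\|x\|_{L^2}\leqslant(M/\sqrt m)\|x\|_X$. Injectivity is immediate, since $\mathcal{H}x=0$ yields $\|x\|_X^2=\langle\mathcal{H}x,x\rangle_{L^2}=0$. For surjectivity I would construct the inverse pointwise: for a.e.\ $\xi$ the matrix $\mathcal{H}(\xi)$ is self-adjoint with all eigenvalues in $[m,M]$, hence invertible with $\|\mathcal{H}(\xi)^{-1}\|\leqslant 1/m$, and $\xi\mapsto\mathcal{H}(\xi)^{-1}$ is measurable (its entries are rational functions of those of $\mathcal{H}(\xi)$ via the adjugate formula). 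Therefore $(\mathcal{G}y)(\xi):=\mathcal{H}(\xi)^{-1}y(\xi)$ defines a bounded operator $\mathcal{G}\colon L^2\to L^2$ with $\|\mathcal{G}y\|_{L^2}\leqslant(1/m)\|y\|_{L^2}$, hence also a bounded operator $\mathcal{G}\colon L^2\to X$ by the norm equivalence from (i). Since $\mathcal{H}\mathcal{G}=\mathcal{G}\mathcal{H}=\id$ pointwise a.e., $\mathcal{G}$ is a two-sided inverse, so $\mathcal{H}\colon X\to L^2$ is a bounded bijection with bounded inverse, i.e.\ an isomorphism of Banach spaces. (Alternatively, once boundedness and bijectivity are established, one could appeal to the open mapping theorem for the continuity of the inverse.)

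The only mildly delicate point is the measurability and the uniform bound of $\xi\mapsto\mathcal{H}(\xi)^{-1}$; everything else is bookkeeping with the two-sided estimate. I expect this to be the main (small) obstacle, resolved by the adjugate formula together with the lower bound $m|\zeta|^2\leqslant\langle\mathcal{H}(\xi)\zeta,\zeta\rangle_{\mathbb{C}^d}$, which bounds the smallest eigenvalue of $\mathcal{H}(\xi)$ from below by $m$.
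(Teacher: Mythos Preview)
Your proof is correct. Part (i) is essentially identical to the paper's argument. For part (ii) the approaches differ: the paper argues purely functional-analytically, first showing that $\mathcal{H}\colon L^2\to L^2$ is injective via $\langle\mathcal{H}x,x\rangle_{L^2}\geqslant m\|x\|_{L^2}^2$, then deducing from this same estimate (and Cauchy--Schwarz) that $\mathcal{H}^{-1}$ is bounded on $\ran\mathcal{H}$, hence $\ran\mathcal{H}$ is closed, and finally showing $(\ran\mathcal{H})^\perp=\{0\}$ again from the coercivity estimate; the isomorphism $\mathcal{H}\colon X\to L^2$ then follows by composing with $\id\colon X\to L^2$. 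You instead invert $\mathcal{H}(\xi)$ pointwise and check measurability and boundedness of $\xi\mapsto\mathcal{H}(\xi)^{-1}$ directly. Your route is more explicit and gives the inverse concretely, at the cost of the small measurability check; the paper's route avoids pointwise considerations entirely and would work verbatim for any bounded self-adjoint coercive operator on a Hilbert space, not just a multiplication operator.
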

\begin{proof}(i) From our assumptions on $\mathcal{H}$ it follows that $\langle\cdot,\cdot\rangle{}_X$ is an inner product on $X$ and that $\|\cdot\|_{L^2}$ and $\|\cdot\|_X$ are equivalent. Therefore $(X,\|\cdot\|_X)$ is complete and thus $(X,\langle{}\cdot,\cdot\rangle{})$ is Hilbert. 

\smallskip

(ii) We first consider $\mathcal{H}\colon L^2\rightarrow L^2$ and observe that this operator is continuous. By our assumptions on $\mathcal{H}$ we have $\langle{}\mathcal{H}x,x\rangle{}_{L^2}\geqslant{}m\|x\|^2_{L^2}$ for every $x\in L^2$. This shows that $\mathcal{H}$ is injective and we can consider $\mathcal{H}^{-1}\colon\ran\mathcal{H}\rightarrow L^2$. For $y=\mathcal{H}x\in\ran\mathcal{H}$ with $x\in L^2$ we get
$$
\|\mathcal{H}^{-1}y\|_{L^2}=\|x\|_{L^2}\leqslant\|\mathcal{H}x\|_{L^2}=\|y\|_{L^2}
$$
by the Bunyakovsky-Cauchy-Schwarz inequality. Thus, $\mathcal{H}^{-1}$ is continuous and  
$\ran\mathcal{H}\subseteq{}L^2$ is closed. Let $y\in(\ran\mathcal{H})^{\perp}$, i.e., $\langle{}Ax,y\rangle{}=0$ for all $x\in L^2$. Putting $x=y$ shows together with our initial estimate, that only $y=0$ is possible. Thus, $\ran\mathcal{H}\subseteq L^2$ is dense by \cite[I.2.11]{C}. Consequently, $\ran\mathcal{H}=L^2$ and we proved that $\mathcal{H}\colon L^2\rightarrow L^2$ is an isomorphism of Banach spaces. The proof of (i) showed that $\id_{L^2}\colon X\rightarrow L^2$ is an isomorphism. A combination of both statements establishes that 
$\mathcal{H}\colon X\rightarrow L^2$ is an isomorphism. 
\end{proof}

\smallskip

We denote by $H^1$ the Sobolev space $H^1(a,b,\mathbb{C}^d)$ whose members are all absolutely continuous functions $x\in L^2$---in particular $x$ is almost everywhere differentiable---such that also $x'\in L^2$ holds. We define the operator $A_0\colon D(A_0)\rightarrow X$ via
\begin{equation*}
A_0x = P_0\mathcal{H}x + P_1(\mathcal{H}x)'\: \text{ for } \: x\in D(A_0)=\big\{x\in X\:;\:\mathcal{H}x\in H^1 \text{ and } x(a)=x(b)=0\big\}
\end{equation*}
and define $\Gamma_1$, $\Gamma_2\colon H^1\rightarrow\mathbb{C}^d$ via
\begin{equation*}
\Gamma_1y={\textstyle\frac{1}{\sqrt{2}}}P_1(y(b)-y(a)) \;\; \text{ and } \;\; \Gamma_2y={\textstyle\frac{1}{\sqrt{2}}}(y(b)+y(a))
\end{equation*}
for $y\in H^1$. Our aim for this section is to employ the method of boundary triplets that we established in Section \ref{SEC-4} to classify those extensions of $A_0$ which generate a $\Cnull$-semigroup.

\medskip

In view of the condition in Theorem \ref{PHS-THM} below we remind the reader at this point that for $M\in\mathbb{C}^{d\times{}d}$ the estimate $M\geqslant 0$ holds by definition if $\langle{}M\xi,\xi\rangle{}_{\mathbb{C}^d}\geqslant0$ for all $\xi\in\mathbb{C}^d$.

\smallskip

\begin{thm}\label{PHS-THM} If $W\in\mathbb{C}^{d\times{}2d}$ satisfies $\rk W=d$ and $W\Sigma{}W^{\star}\geqslant0$ with $\Sigma=\bigl[\begin{smallmatrix}0&I\\ I&0\end{smallmatrix} \bigr]$, then the operator $B_0\colon D(B_0)\rightarrow X$ defined by
\begin{equation*}
B_0x = P_0\mathcal{H}x + P_1(\mathcal{H}x)'\: \text{ for } \: x\in D(B_0)=\big\{x\in X\:;\:\mathcal{H}x\in H^1 \text{ and } W{\textstyle\columnvec{\Gamma_1\mathcal{H}x}{\Gamma_2\mathcal{H}x}}=0\big\}
\end{equation*}
extends the operator $A_0\colon D(A_0)\rightarrow X$ and generates a $\Cnull$-semigroup of contractions. Conversely, every operator $B_0\colon D(B_0)\rightarrow X$, that extends $A_0\colon D(A_0)\rightarrow X$ and generates a $\Cnull$-semigroup of contractions, is of the form above for some $W$ with the properties above.
\end{thm}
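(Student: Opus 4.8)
The plan is to realise $A_0$, after transporting by $\mathcal{H}$, as a densely defined, closed, skew-symmetric operator carrying an explicit boundary triplet, to apply Theorem \ref{MAIN}, and then to reformulate the abstract contraction it produces as a matrix $W$ with the asserted properties. First I would set up the operator-theoretic side. By Lemma \ref{H-LEM}(ii), $\mathcal{H}\colon X\to L^2$ is a Banach space isomorphism; since $C_c^\infty(a,b)\subseteq\{z\in H^1;z(a)=z(b)=0\}$ is dense in $L^2$, the preimage $D(A_0)$ is dense in $X$, and using the Sobolev embedding $H^1\hookrightarrow C[a,b]$ (so that $e\mapsto e(a),e(b)$ is continuous) together with invertibility of $P_1$ one checks as usual that $A_0$ is closed. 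The key computation is the Green identity: using $P_0^\star=-P_0$, $P_1^\star=P_1$ and the definition of $\langle\cdot,\cdot\rangle_X$, integration by parts gives, for all $x,y$ with $\mathcal{H}x,\mathcal{H}y\in H^1$ (write $e=\mathcal{H}x$, $f=\mathcal{H}y$),
$$
\langle P_0e+P_1e',y\rangle_X+\langle x,P_0f+P_1f'\rangle_X=\langle P_1e(b),f(b)\rangle_{\mathbb{C}^d}-\langle P_1e(a),f(a)\rangle_{\mathbb{C}^d}.
$$
Restricting to $x,y\in D(A_0)$ shows $A_0\subseteq-A_0^\star$; and a standard weak-derivative argument (test against $y$ with $\mathcal{H}y\in C_c^\infty$, then use that $P_1$ is invertible to conclude $\mathcal{H}x\in H^1$) identifies $D(-A_0^\star)=\{x\in X;\mathcal{H}x\in H^1\}$ with $-A_0^\star x=P_0\mathcal{H}x+P_1(\mathcal{H}x)'$; in particular this operator is exactly the differential expression appearing in the statement.

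Next I would check that $(\mathbb{C}^d,\Gamma_1^{A},\Gamma_2^{A})$ with $\Gamma_1^{A}x=\Gamma_1(\mathcal{H}x)$ and $\Gamma_2^{A}x=-\Gamma_2(\mathcal{H}x)$ is a boundary triplet for $A_0$; the sign on the second map is what (BT-1) requires, since it is $-A_0^\star$, not $A_0^\star$, that equals the differential expression. Expanding $\langle\Gamma_1(\mathcal{H}x),\Gamma_2(\mathcal{H}y)\rangle_{\mathbb{C}^d}+\langle\Gamma_2(\mathcal{H}x),\Gamma_1(\mathcal{H}y)\rangle_{\mathbb{C}^d}$ and using $P_1^\star=P_1$, all cross terms cancel and one is left with $\langle P_1e(b),f(b)\rangle_{\mathbb{C}^d}-\langle P_1e(a),f(a)\rangle_{\mathbb{C}^d}$, which combined with the Green identity yields (BT-1). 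For (BT-2) one solves $\Gamma_1^{A}x=v_1$, $\Gamma_2^{A}x=v_2$ for the endpoint values $(\mathcal{H}x)(a),(\mathcal{H}x)(b)$ (possible since $P_1$ is invertible) and takes $x=\mathcal{H}^{-1}e$ for any $e\in H^1$ interpolating these values. Finally $\{x\in D(-A_0^\star);\Gamma_1^{A}x=\Gamma_2^{A}x=0\}=\{x;\mathcal{H}x\in H^1,\ (\mathcal{H}x)(a)=(\mathcal{H}x)(b)=0\}=D(A_0)$, in accordance with Lemma \ref{LEM-0}(i).

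Now Theorem \ref{MAIN} applies: an extension $B_0$ of $A_0$ generates a $\Cnull$-semigroup of contractions if and only if $B_0=-A_0^\star|_{D(B_0)}$ for a unique contraction $K\in L(\mathbb{C}^d)$, with $D(B_0)$ cut out by a homogeneous linear relation in $\Gamma_1^{A}x,\Gamma_2^{A}x$; unravelling the triplet this reads $D(B_0)=\{x;\mathcal{H}x\in H^1,\ W_K{\textstyle\columnvec{\Gamma_1\mathcal{H}x}{\Gamma_2\mathcal{H}x}}=0\}$ with $W_K:=[\,K-1\mid-(K+1)\,]\in\mathbb{C}^{d\times2d}$, and $-A_0^\star x=P_0\mathcal{H}x+P_1(\mathcal{H}x)'$, so $B_0$ already has the form in the statement. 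Two short computations match the matrix classes: if $w^\star W_K=0$ then $w^\star(K-1)=0=w^\star(K+1)$, hence $w=0$, so $\rk W_K=d$; and a direct expansion with $\Sigma=\bigl[\begin{smallmatrix}0&I\\ I&0\end{smallmatrix}\bigr]$ gives $W_K\Sigma W_K^\star=2(I-KK^\star)\geqslant0$, precisely because $\|K\|\leqslant1$. This establishes the converse assertion of the theorem. For the first assertion, given $W=[\,W_1\mid W_2\,]$ with $\rk W=d$ and $W\Sigma W^\star\geqslant0$, put $K:=(W_1+W_2)^{-1}(W_2-W_1)$: here $W_1+W_2$ is invertible, for otherwise some $v\neq0$ with $(W_1+W_2)v=0$ produces a vector $z\in\ker W$ with both blocks equal to $v$, for which $\langle\Sigma z,z\rangle_{\mathbb{C}^{2d}}=2\|v\|^2>0$, contradicting that $W\Sigma W^\star\geqslant0$ and $\rk W=d$ force $\Sigma\leqslant0$ on $\ker W$ (equivalently $\Sigma\geqslant0$ on $\ran W^\star=(\ker W)^\perp$, and the orthogonal complement of a maximal non-negative subspace of the signature-$(d,d)$ form $\Sigma$ is non-positive). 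With $T:=-\tfrac12(W_1+W_2)$ one verifies $W=TW_K$, whence $0\leqslant W\Sigma W^\star=2\,T(I-KK^\star)T^\star$ forces $\|K\|\leqslant1$, while $\ker W=\ker W_K$ makes the domain defined by $W$ coincide with the one Theorem \ref{MAIN} attaches to $K$; hence the operator $B_0$ associated with $W$ extends $A_0$ and generates a $\Cnull$-semigroup of contractions.

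The main obstacle I expect to be twofold. Analytically it is the concrete identification $D(-A_0^\star)=\{x;\mathcal{H}x\in H^1\}$ together with closedness of $A_0$ --- the weak-derivative bookkeeping, in which invertibility of $P_1$ is the decisive ingredient. Algebraically it is showing that \emph{every} admissible $W$ arises from a contraction, i.e.\ the invertibility of $W_1+W_2$, which rests on the fact that a $d$-dimensional subspace on which the indefinite form $\Sigma$ is non-negative has non-positive orthogonal complement.
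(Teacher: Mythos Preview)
Your approach is essentially the paper's, and it is correct: establish that the underlying operator is closed, densely defined and skew-symmetric, exhibit $(\mathbb{C}^d,\Gamma_1,\Gamma_2)$ as a boundary triplet, invoke Theorem~\ref{MAIN}, and then set up the dictionary between contractions $K\in L(\mathbb{C}^d)$ and matrices $W\in\mathbb{C}^{d\times 2d}$ with $\rk W=d$ and $W\Sigma W^{\star}\geqslant0$. Two organisational differences are worth noting. First, the paper does not build the boundary triplet for $A_0$ directly in the weighted space $X$; instead it proves a short reduction lemma (Lemma~\ref{REDUCTION}) showing that $B_0$ is maximal dissipative in $X$ if and only if $B=B_0\mathcal{H}^{-1}$ is maximal dissipative in $L^2$, and then works entirely with $A=P_0+P_1\,d/d\xi$ in $L^2$. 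This avoids carrying $\mathcal{H}$ through the adjoint computation and the boundary triplet verification. Second, for the invertibility of $W_1+W_2$ the paper gives a self-contained matrix argument: from $W\Sigma W^{\star}=W_1W_2^{\star}+W_2W_1^{\star}\geqslant0$ one derives $(W_1+W_2)(W_1+W_2)^{\star}\geqslant(W_1-W_2)(W_1-W_2)^{\star}$, so $(W_1+W_2)^{\star}\zeta=0$ forces $(W_1-W_2)^{\star}\zeta=0$ and hence $W^{\star}\zeta=0$, whence $\zeta=0$ by $\rk W=d$. Your route via ``the orthogonal complement of a maximal non-negative subspace for the signature-$(d,d)$ form $\Sigma$ is non-positive'' is correct but is itself a nontrivial Krein-space fact (proved, e.g., via the angular operator representation) that a reader of this paper would need to supply; the paper's inequality is more elementary and in keeping with its self-contained style. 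Finally, watch the sign bookkeeping when you insert the minus into $\Gamma_2^{A}$: tracing Theorem~\ref{MAIN} through your triplet gives $(K-1)\Gamma_1(\mathcal{H}x)+(K+1)\Gamma_2(\mathcal{H}x)=0$, so $W_K=[\,K-1\mid K+1\,]$ rather than $[\,K-1\mid-(K+1)\,]$; the end result is unaffected because the classes $\mathcal{W}$ and $\mathcal{K}$ still correspond, but the intermediate formulas for $W_K\Sigma W_K^{\star}$ and for $K$ in terms of $W$ need to be adjusted consistently.
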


\smallskip

We use the following lemma in order to reduce the proof to the special case $\mathcal{H}=\id_{L^2}$.

\smallskip

\begin{lem}\label{REDUCTION} Let $\mathcal{H}$ and $X$ be as above. An operator $B\colon D(B)\rightarrow L^2$ is maximal dissipative in $L^2$ if and only if the operator $B_0\colon D(B_0)\rightarrow X$ defined by $B_0x=B\mathcal{H}x$ for $x\in D(B_0)=\{x\in X\:;\:\mathcal{H}x\in D(B)\}$ is maximal dissipative in $X$.
\end{lem}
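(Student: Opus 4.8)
\textbf{Proof proposal for Lemma \ref{REDUCTION}.}

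The plan is to exploit that $\mathcal{H}\colon X\rightarrow L^2$ is an isomorphism of Banach spaces (Lemma \ref{H-LEM}(ii)) and moreover that it realizes the inner product on $X$ via $\langle{}u,v\rangle{}_X=\langle{}\mathcal{H}u,v\rangle{}_{L^2}$. I would first unwind the dissipativity inequality. For $x\in D(B_0)$, writing $y=\mathcal{H}x\in D(B)$, one has
$$
\Re\langle{}B_0x,x\rangle{}_X=\Re\langle{}\mathcal{H}(B\mathcal{H}x),x\rangle{}_{L^2}=\Re\langle{}B y,\mathcal{H}x\rangle{}_{L^2}=\Re\langle{}By,y\rangle{}_{L^2},
$$
using that $\mathcal{H}$ is self-adjoint on $L^2$ and $\mathcal{H}x=y$. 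Hence $B_0$ is dissipative in $X$ exactly when $B$ is dissipative in $L^2$, once one checks that $x\mapsto\mathcal{H}x$ is a bijection from $D(B_0)$ onto $D(B)$ and that $D(B_0)$ is dense in $X$ iff $D(B)$ is dense in $L^2$ — both of which are immediate because $\mathcal{H}$ is a topological isomorphism $X\to L^2$ with dense-range-preserving and homeomorphic properties.

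Next I would transfer the extension/maximality statement. The key observation is that the assignment $C\mapsto C_0$, where $C_0x:=C\mathcal{H}x$ on $D(C_0)=\{x\in X:\mathcal{H}x\in D(C)\}$, is an inclusion-preserving bijection between operators in $L^2$ and operators in $X$: if $C\subseteq C'$ then clearly $C_0\subseteq C_0'$, and conversely applying the inverse construction $D\mapsto D^{\flat}$ with $D^{\flat}z:=\mathcal{H}^{-1}(D z)\cdot$… more precisely $D^{\flat}:=\mathcal{H}\circ D\circ\mathcal{H}^{-1}$ restricted appropriately recovers $C$ from $C_0$, so the correspondence is an order isomorphism on the poset of operators. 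Since by the previous paragraph it also preserves dissipativity (in both directions), it maps maximal dissipative operators in $L^2$ bijectively onto maximal dissipative operators in $X$: if $B$ is maximal dissipative in $L^2$ and $B_0\subseteq \widetilde B$ with $\widetilde B$ dissipative in $X$, then $\widetilde B$ corresponds to some dissipative $\widetilde C$ in $L^2$ with $B\subseteq\widetilde C$, forcing $B=\widetilde C$ and hence $B_0=\widetilde B$; the reverse implication is symmetric.

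I expect the main (only mild) obstacle to be bookkeeping around domains: one must be careful that $B_0x=B\mathcal{H}x$ genuinely lands in $X$ — which it does since $X=L^2$ as sets — and that denseness of $D(B_0)$ in $(X,\|\cdot\|_X)$ is equivalent to denseness of $D(B)=\mathcal{H}(D(B_0))$ in $(L^2,\|\cdot\|_{L^2})$, which follows because $\mathcal{H}$ and $\mathcal{H}^{-1}$ are bounded (Lemma \ref{H-LEM}(ii)) and the two norms on $X$ resp.\ $L^2$ are the ambient ones. Everything else is the short computation displayed above together with the formal inclusion-preservation of the conjugation $C\mapsto\mathcal{H}C\mathcal{H}^{-1}$, so the proof is essentially a two-line calculation plus a routine verification that the correspondence is an order isomorphism.
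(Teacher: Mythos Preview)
Your approach is essentially identical to the paper's: both compute $\langle B_0x,x\rangle_X=\langle B\mathcal{H}x,\mathcal{H}x\rangle_{L^2}$ to transfer dissipativity via the isomorphism $\mathcal{H}\colon X\to L^2$, and then observe that $C\mapsto C_0$ (with $C_0x=C\mathcal{H}x$) is an inclusion-preserving bijection between operators on $L^2$ and on $X$, with inverse $C_0\mapsto C$ given by $Cy=C_0\mathcal{H}^{-1}y$, which carries dissipative extensions to dissipative extensions and hence preserves maximality. One small slip to fix when you write it out: since $B_0=B\circ\mathcal{H}$, the inverse of the correspondence is $D\mapsto D\circ\mathcal{H}^{-1}$, not the conjugation $\mathcal{H}\circ D\circ\mathcal{H}^{-1}$ you wrote; this is a one-sided composition, not a similarity transform.
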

\begin{proof} For $x\in D(B_0)$, i.e., $\mathcal{H}x\in D(B)$, we have $\langle{}B_0x,x\rangle{}_{X}=\langle{}\mathcal{H}B\mathcal{H}x,x\rangle{}_{L^2}=\langle{}B\mathcal{H}x,\mathcal{H}x\rangle{}_{L^2}$. Since the isomorphism $\mathcal{H}\colon X\rightarrow L^2$ maps $D(B_0)$ onto $D(B)$ we get that $B$ is densely defined if and only if $B_0$ is densely defined. Together with our first computation the latter shows that $B_0$ is dissipative in $X$ if and only if $B$ is dissipative in $L^2$.

\smallskip

If $C$ is an extension of $B$ then $C_0\colon D(C_0)\rightarrow X$, defined by $C_0x=C\mathcal{H}x$ for $x\in D(C_0)=\{x\in X\:;\:\mathcal{H}x\in D(C)\}$, is an extension of $B_0$. If $C_0$ is an extension of $B_0$, then $C\colon D(C)\rightarrow L^2$ defined by $Cx=C_0\mathcal{H}^{-1}x$ for $x\in D(C)=\{x\in L^2\:;\:\mathcal{H}^{-1}x\in D(C_0)\}$ is an extension of $B$.  The above assignments are inverse to each other and by the first part of the proof, the operator $C$ is dissipative if and only if $C_0$ is dissipative. We thus established a one-to-one correspondence between the dissipative extensions of $B$ and $B_0$ which finishes the proof.
\end{proof}

\smallskip

By the lemma above it is enough to prove Theorem \ref{PHS-THM} for the case $\mathcal{H}=\id_{L^2}$. To avoid confusion, we state this special case explicitly below. First, we fix the following notation for the rest of this section.

\smallskip

Let $A\colon D(A)\rightarrow L^2$ be given by
\begin{equation*}
Ax = P_0x + P_1x'\: \text{ for } \: x\in D(A)=\big\{x\in H^1\:;\: x(a)=x(b)=0\big\}
\end{equation*}
and fix $\Gamma_1$, $\Gamma_2\colon H^2\rightarrow \mathbb{C}^d$ with
\begin{equation*}
\Gamma_1y={\textstyle\frac{1}{\sqrt{2}}}P_1(y(b)-y(a)) \;\; \text{ and } \;\; \Gamma_2y={\textstyle\frac{1}{\sqrt{2}}}(y(b)+y(a))
\end{equation*}
for $y\in H^1$. Finally we put
\begin{equation*}
\mathcal{W} = \big\{W\in\mathbb{C}^{d\times2d}\:;\:\rk W=d \text{ and } W\Sigma W^{\star}\geqslant0\big\} \; \text{ with } \; \Sigma=\bigl[\begin{smallmatrix}0&I\\ I&0\end{smallmatrix} \bigr]
\end{equation*}
and have to prove the following.

\smallskip

\begin{prop}\label{PROP-PHS} (i) If $W\in\mathcal{W}$ then the operator $B\colon D(B)\rightarrow L^2$ given by
\begin{equation*}
Bx = P_0x + P_1x'\: \text{ for } \: x\in D(B)=\big\{x\in H^1\:;\:W{\textstyle\columnvec{\Gamma_1x}{\Gamma_2x}}=0\big\}
\end{equation*}
\phantom{a}\hspace{31pt}generates a $\Cnull$-semigroup of contractions.\vspace{3pt}
\begin{compactitem}
\item[(ii)] If $B\colon D(B)\rightarrow X$ extends $A\colon D(A)\rightarrow X$ and generates a $\Cnull$-semigroup of contractions, then there exists $W\in\mathcal{W}$ such that $B\colon D(B)\rightarrow L^2$ is of the form above.
\end{compactitem}
\end{prop}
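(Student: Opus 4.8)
\emph{Proof idea.} The plan is to recognise $(\mathbb{C}^d,\Gamma_1,\Gamma_2)$, with $D(-A^{\star})$ suitably identified, as a boundary triplet for $A$, and then to read off both directions from Theorem \ref{MAIN} via an explicit finite--dimensional dictionary between contractions on $\mathbb{C}^d$ and the set $\mathcal{W}$.

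\textbf{Step 1 (the boundary triplet).} First I would verify that $A$ is densely defined (it contains $C^{\infty}_c(a,b;\mathbb{C}^d)$), closed (the maximal operator $L\colon y\mapsto P_0y+P_1y'$ with $D(L)=H^1$ is closed because its graph norm is equivalent to the $H^1$--norm, using that $P_1$ is invertible, and $A$ is the restriction of $L$ to the kernel of the continuous functional $y\mapsto(y(a),y(b))$) and skew--symmetric. The skew--symmetry and the shape of $-A^{\star}$ both follow from the integration--by--parts identity
$$
\langle P_0x+P_1x',y\rangle_{L^2}+\langle x,P_0y+P_1y'\rangle_{L^2}=\langle P_1x(b),y(b)\rangle_{\mathbb{C}^d}-\langle P_1x(a),y(a)\rangle_{\mathbb{C}^d}\qquad(x,y\in H^1),
$$
which uses $P_0^{\star}=-P_0$, $P_1^{\star}=P_1$ and whose right--hand side vanishes on $D(A)$. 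It gives $H^1\subseteq D(-A^{\star})$ with $-A^{\star}y=P_0y+P_1y'$; the reverse inclusion $D(-A^{\star})\subseteq H^1$ is the usual regularity argument: if $y\in D(A^{\star})$, testing $\langle A^{\star}y,\phi\rangle=\langle y,A\phi\rangle$ against $\phi\in C^{\infty}_c$ shows that the distributional derivative of $P_1y$ lies in $L^2$, hence $y\in H^1$. So $-A^{\star}=L$. Rewriting the boundary form through $\Gamma_1,\Gamma_2$ is then a short bilinear computation (the $\tfrac1{\sqrt2}$--normalisations are chosen precisely so that it matches) and yields (BT-1); (BT-2) is solved by inverting $P_1$: given $y_1,y_2\in\mathbb{C}^d$ one prescribes $y(b)=\tfrac1{\sqrt2}(y_2+P_1^{-1}y_1)$, $y(a)=\tfrac1{\sqrt2}(y_2-P_1^{-1}y_1)$ and takes the affine $y\in H^1$ with these endpoint values.

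\textbf{Step 2 (reduction to Theorem \ref{MAIN} and the dictionary).} By Theorem \ref{MAIN}, an extension $B$ of $A$ generates a $\Cnull$--semigroup of contractions if and only if $B=-A^{\star}|_{D(B)}$ with $D(B)=\{x\in H^1\:;\:(K-1)\Gamma_1x-(K+1)\Gamma_2x=0\}$ for a unique contraction $K\in L(\mathbb{C}^d)$, so it remains to match such $K$ with $\mathcal{W}$. Given a contraction $K$, put $W=[\,K-1\ \ -(K+1)\,]\in\mathbb{C}^{d\times2d}$, so that $W\columnvec{\Gamma_1x}{\Gamma_2x}=0$ is exactly the above condition; then $v^{\star}W=0$ forces $v^{\star}(K-1)=v^{\star}(K+1)=0$, hence $v=0$, i.e.\ $\rk W=d$, and a direct computation gives $W\Sigma W^{\star}=2(1-KK^{\star})\geqslant0$, so $W\in\mathcal{W}$: this proves part (ii). Conversely, let $W=[\,W_1\ W_2\,]\in\mathcal{W}$. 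The key point is that $W_1+W_2$ is invertible: if $(W_1+W_2)^{\star}v=0$ with $v\neq0$, then $W_1^{\star}v=-W_2^{\star}v=:a$ and $v^{\star}W\Sigma W^{\star}v=-2\|a\|^2_{\mathbb{C}^d}$, which together with $W\Sigma W^{\star}\geqslant0$ forces $a=0$, i.e.\ $W^{\star}v=0$, contradicting $\rk W=d$. Hence one may set $K=(W_1+W_2)^{-1}(W_2-W_1)$; left--multiplying $W$ by the invertible matrix $-2(W_1+W_2)^{-1}$ turns it into $[\,K-1\ \ -(K+1)\,]$, so $\ker W$ and therefore $D(B)$ are unchanged, and reversing the previous computation ($W\Sigma W^{\star}\geqslant0$ becomes $KK^{\star}\leqslant1$) gives $\|K\|_{L(\mathbb{C}^d)}\leqslant1$. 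With this $K$, Theorem \ref{MAIN} says $B$ generates a $\Cnull$--semigroup of contractions, and $D(A)\subseteq D(B)$ is clear since $x(a)=x(b)=0$ gives $\Gamma_1x=\Gamma_2x=0$: this is part (i).

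\textbf{Expected obstacles.} Two steps need care. The first is the identification $D(-A^{\star})=H^1$ together with the on--the--nose verification of (BT-1): the abstract Green's identity must be matched with the concrete integration--by--parts formula, and one has to be scrupulous with the signs and factors coming from $P_1$, $P_0^{\star}=-P_0$ and the $\tfrac1{\sqrt2}$--normalisations. The second, and the genuine heart of the finite--dimensional part, is the invertibility of $W_1+W_2$ for $W\in\mathcal{W}$; this is exactly the step that uses the rank hypothesis and the positivity hypothesis \emph{simultaneously}, and it is what guarantees that every admissible $W$ comes from a genuine contraction $K$ rather than merely from a contractive linear relation.
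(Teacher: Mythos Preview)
Your proof is correct and follows essentially the same route as the paper: you establish that $(\mathbb{C}^d,\Gamma_1,\Gamma_2)$ is a boundary triplet for $A$ (the paper's Lemmas \ref{LEM-5} and \ref{LEM-6}), and your dictionary $K=(W_1+W_2)^{-1}(W_2-W_1)$ together with the invertibility argument for $W_1+W_2$ and the identity $W\Sigma W^{\star}\leftrightarrow 2(1-KK^{\star})$ is exactly the paper's map $\Theta$ from Lemma \ref{NEW-LEM}. The only cosmetic differences are that the paper packages the three preparatory computations as separate lemmas and proves the invertibility of $W_1+W_2$ via the inequality $(W_1+W_2)(W_1+W_2)^{\star}\geqslant(W_1-W_2)(W_1-W_2)^{\star}$ rather than your direct evaluation of $v^{\star}W\Sigma W^{\star}v$, but the logical content is the same.
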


\smallskip

We start with the following properties of the operator $A\colon D(A)\rightarrow L^2$ and the maps $\Gamma_1$ and $\Gamma_2$.

\smallskip

\begin{lem}\label{LEM-5} The operator $A\colon D(A)\rightarrow L^2$ is densely defined, closed and skew-symmetric. More precisely, $-A^{\star}\colon D(-A^{\star})\rightarrow X$ is given by $-A^{\star}x = P_0x+P_1x'$ for $x\in D(-A^{\star})=H^1$.
\end{lem}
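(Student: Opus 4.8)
The plan is to check the three asserted properties and, along the way, to compute $A^{\star}$ explicitly; the identification of $-A^{\star}$ on $H^1$ is the substantive part, and closedness will then come essentially for free.

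\emph{Dense domain and skew-symmetry.} Since $C_c^{\infty}(a,b;\mathbb{C}^d)\subseteq D(A)$ and this space is dense in $L^2$, the operator $A$ is densely defined. For skew-symmetry I would take $x,y\in D(A)$ and expand $\langle{}Ay,x\rangle{}_{L^2}=\langle{}P_0y,x\rangle{}_{L^2}+\langle{}P_1y',x\rangle{}_{L^2}$; using $P_0^{\star}=-P_0$ moves $P_0$ to the second slot with a sign, and using $P_1^{\star}=P_1$ together with integration by parts on $(a,b)$ turns the second term into $[\langle{}y(\xi),P_1x(\xi)\rangle{}_{\mathbb{C}^d}]_{\xi=a}^{b}-\langle{}y,P_1x'\rangle{}_{L^2}$. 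The boundary term vanishes because $x(a)=x(b)=0$, leaving $\langle{}Ay,x\rangle{}_{L^2}=\langle{}y,-(P_0x+P_1x')\rangle{}_{L^2}$. In particular $y\mapsto\langle{}Ay,x\rangle{}_{L^2}$ is continuous, so $x\in D(A^{\star})$ with $A^{\star}x=-(P_0x+P_1x')$, i.e.\ $A\subseteq-A^{\star}$.

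\emph{The adjoint on $H^1$.} For $H^1\subseteq D(-A^{\star})$ I would repeat the integration by parts for $x\in H^1$ and $y\in D(A)$: this time only $y(a)=y(b)=0$ is needed to discard the boundary term (and $x$ is genuinely continuous on $[a,b]$ via $H^1\hookrightarrow C[a,b]$), so again $\langle{}Ay,x\rangle{}_{L^2}=\langle{}y,-(P_0x+P_1x')\rangle{}_{L^2}$, whence $x\in D(A^{\star})$ with $-A^{\star}x=P_0x+P_1x'$. The reverse inclusion $D(-A^{\star})\subseteq H^1$ is the crux. Given $x\in D(A^{\star})$, set $z=-A^{\star}x\in L^2$; testing the identity $\langle{}Ay,x\rangle{}_{L^2}=-\langle{}y,z\rangle{}_{L^2}$ against $y\in C_c^{\infty}(a,b;\mathbb{C}^d)$ and using $P_0^{\star}=-P_0$, $P_1^{\star}=P_1$ yields $\langle{}y',P_1x\rangle{}_{L^2}=\langle{}y,P_0x-z\rangle{}_{L^2}$ for all such $y$; that is, $P_1x$ has distributional derivative $z-P_0x\in L^2$. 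By the standard characterization of $H^1(a,b)$ (an $L^2$-function whose distributional derivative lies in $L^2$ is absolutely continuous with that function as its a.e.\ derivative) we get $P_1x\in H^1$, hence $x\in H^1$ since $P_1$ is invertible, and $P_1x'=z-P_0x$, i.e.\ $-A^{\star}x=z=P_0x+P_1x'$. Thus $D(-A^{\star})=H^1$ and the stated formula holds.

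\emph{Closedness.} I would argue directly. If $x_n\in D(A)$ with $x_n\to x$ and $Ax_n\to z$ in $L^2$, then $x_n'=P_1^{-1}(Ax_n-P_0x_n)$ converges in $L^2$; since $L^2$-convergence of $x_n$ together with $L^2$-convergence of $x_n'$ forces $x\in H^1$ with $x'=\lim_n x_n'=P_1^{-1}(z-P_0x)$, and since the evaluation functionals $u\mapsto u(a)$ and $u\mapsto u(b)$ are continuous on $H^1$, we obtain $x(a)=x(b)=0$. Hence $x\in D(A)$ and $Ax=P_0x+P_1x'=z$, so $A$ is closed. (Alternatively one could note that $A$ is closable with $A^{\star\star}=A$, recomputing $A^{\star\star}$ by the same integration-by-parts argument, the endpoint conditions being forced there by the now non-vanishing boundary term.) Apart from bookkeeping with $P_0^{\star}=-P_0$, $P_1^{\star}=P_1$ and the continuity of the $H^1$-trace, the only non-routine step is the regularity claim in the second paragraph---that membership in $D(A^{\star})$ already forces $H^1$-regularity of $x$; this rests on the du Bois--Reymond / fundamental lemma description of $H^1(a,b)$, which I would invoke as a known fact rather than prove.
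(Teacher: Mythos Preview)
Your argument is correct and follows the same integration-by-parts skeleton as the paper, but the packaging differs in two places worth noting. For the crucial inclusion $D(A^{\star})\subseteq H^1$ you invoke the distributional characterization of $H^1(a,b)$ (du Bois--Reymond) as a black box; the paper instead isolates $A_1=P_1D$, introduces the antiderivative $Z(\cdot)=\int_a^{\cdot}P_1^{-1}A_1^{\star}y$, and shows directly that $P_1(y+Z)$ is orthogonal to $\{x':x\in H^1,\;x(a)=x(b)=0\}$, hence constant---so it effectively proves the lemma you cite, keeping the argument self-contained. For closedness you give a direct sequential argument using $H^1$-convergence and continuity of the trace, whereas the paper observes abstractly that $P_1D$ is closed (with $P_1$ invertible) and that adding the bounded perturbation $P_0$ preserves closedness. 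Both routes are standard; yours is marginally quicker if one is willing to quote the $H^1$ regularity result, while the paper's is more in keeping with its stated aim of minimizing external prerequisites.
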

\begin{proof} That $D(A)\subseteq L^2$ is dense can be seen by the same arguments used in \cite[Example X.1.11]{C}.

\smallskip

The multiplication operators $P_k\colon L^2\rightarrow L^2$, $[P_k(x)](\xi)=P_kx(\xi)$ for $\xi\in(a,b)$ and $k=0,1$ are continuous. The derivative $D\colon D(A)\rightarrow L^2$ \red{is closed}, see again \cite[Example X.1.11]{C}, and thus also $P_1D$ is closed. Finally, $A=P_0+P_1D$ is closed as the sum of a continuous and a closed operator.

\smallskip

We put $A_1=P_1D$ with $D(A_1)=D(A)$. We claim that $D(A_1^{\star})=H^1$. Let $y\in D(A_1^{\star})$, put $z=P_1^{-1}A_1^{\star}y$ and $Z(\cdot)=\int_a^{\cdot}z(\xi)d\xi$. For $x\in D(A_1)$ we compute
$$
{\textstyle\int_a^b}\langle{}x'(\xi),P_1y(\xi)\rangle{}_{\mathbb{C}^d}d\xi = \langle{}A_1x,y\rangle{}_{L^2} = \langle{}x,A_1^{\star}y\rangle{}_{L^2} = \langle{}x,P_1z\rangle{}_{L^2} =-{\textstyle\int_a^b}\langle{}x'(\xi),P_1Z(\xi)\rangle{}_{\mathbb{C}^d}d\xi
$$
where the last equation follows by component-wise integration by parts. We showed that $\langle{}x',P_1(y+Z)\rangle{}_{L^2}=0$ holds for all $x\in D(A_1)$. Consequently,
$$
\red{P_1(y+Z)\in\{x'\:;\:x\in H^1 \text{ and } x(a)=x(b)=0\}^{\perp}=\{c\colon[a,b]\rightarrow\mathbb{C}^d\:;\:c \text{ is constant}\}}
$$
and thus there exists $c\in\mathbb{C}^d$ such that $P_1(y(\xi)+Z(\xi))=c$ for all $\xi\in(a,b)$. Therefore, $y=P_1^{-1}c-Z$ is absolutely continuous and $y'=-z\in L^2$, i.e., $y\in H^1$ and in addition $A_1^{\star}y=P_1z=-P_1y'$. Let now $y\in H^1$. For $x\in D(A_1)$, in particular $x(a)=x(b)=0$, we compute
$$
\langle{}A_1x,y\rangle{}_{L^2} = {\textstyle\int_a^b}\langle{}P_1x'(\xi),y(\xi)\rangle{}_{\mathbb{C}^d}d\xi = -{\textstyle\int_a^b}\langle{}x(\xi),P_1y'(\xi)\rangle{}_{\mathbb{C}^d}d\xi = \langle{}x,-P_1y'\rangle{}_{L^2}
$$
which shows that the map $D(A_1)\rightarrow\mathbb{C}$, $x\mapsto\langle{}A_1x,y\rangle{}_{L^2}$ is continuous, i.e., $y\in D(A_1^{\star})$.

\smallskip

Since the multiplication operator $P_0\colon L^2\rightarrow L^2$ is continuous and the matrix $P_0$ satisfies $P_0^{\star}=-P_0$, it \red{follows that} $A^{\star}=(P_0+A_1)^{\star}=P_0^{\star}+A^{\star}$, i.e., $D(-A^{\star})=H^1$ and $A^{\star}x=-P_0x-P_1x'$. In particular,  $A$ is skew-symmetric.
\end{proof}

\smallskip

\begin{lem}\label{LEM-6} The triple $(\mathbb{C}^d,\Gamma_1,\Gamma_2)$ is a boundary triplet for $A$.
\end{lem}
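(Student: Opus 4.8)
The plan is to verify the two conditions (BT-1) and (BT-2) from Definition \ref{dfn-BT} directly, using the explicit description $D(-A^{\star})=H^1$ and the formula $-A^{\star}x=P_0x+P_1x'$ established in Lemma \ref{LEM-5}. The auxiliary Hilbert space is $H=\mathbb{C}^d$ with its euclidean inner product.

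First I would establish (BT-1), the abstract Green's identity. For $x,y\in H^1$ one computes $\langle{}-A^{\star}x,y\rangle{}_{L^2}+\langle{}x,-A^{\star}y\rangle{}_{L^2}$. The $P_0$-terms cancel because $P_0^{\star}=-P_0$, so $\langle{}P_0x,y\rangle{}_{L^2}+\langle{}x,P_0y\rangle{}_{L^2}=0$. For the $P_1$-terms, component-wise integration by parts gives
$$
\langle{}P_1x',y\rangle{}_{L^2}+\langle{}x,P_1y'\rangle{}_{L^2}=\big[\langle{}P_1x(\xi),y(\xi)\rangle{}_{\mathbb{C}^d}\big]_a^b=\langle{}P_1x(b),y(b)\rangle{}_{\mathbb{C}^d}-\langle{}P_1x(a),y(a)\rangle{}_{\mathbb{C}^d},
$$
using $P_1^{\star}=P_1$. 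It then remains a short linear-algebra identity to check that this boundary expression equals $\langle{}\Gamma_1x,\Gamma_2y\rangle{}_{\mathbb{C}^d}+\langle{}\Gamma_2x,\Gamma_1y\rangle{}_{\mathbb{C}^d}$ with the given $\Gamma_1,\Gamma_2$: expanding the right-hand side, the factor $\tfrac12$ from the two $\tfrac1{\sqrt2}$'s combines with the cross terms $(b,b)$, $(b,a)$, $(a,b)$, $(a,a)$, and the mixed $(b,a)$ and $(a,b)$ contributions cancel while the $(b,b)$ and $(a,a)$ contributions reproduce exactly $\langle{}P_1x(b),y(b)\rangle{}-\langle{}P_1x(a),y(a)\rangle{}$.

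Next I would establish (BT-2), surjectivity of $(\Gamma_1,\Gamma_2)\colon H^1\to\mathbb{C}^d\times\mathbb{C}^d$. Given $y_1,y_2\in\mathbb{C}^d$, the boundary values $y(a),y(b)$ are determined by the linear system $\tfrac1{\sqrt2}P_1(y(b)-y(a))=y_1$, $\tfrac1{\sqrt2}(y(b)+y(a))=y_2$, which is uniquely solvable since $P_1$ is invertible, giving $y(b)=\tfrac1{\sqrt2}(P_1^{-1}y_1+y_2)$ and $y(a)=\tfrac1{\sqrt2}(y_2-P_1^{-1}y_1)$. One then simply picks any $H^1$-function with these prescribed endpoint values, e.g.\ the affine interpolation $y(\xi)=y(a)+\tfrac{\xi-a}{b-a}(y(b)-y(a))$, which is absolutely continuous with derivative in $L^2$.

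I do not expect a serious obstacle here; the only mild care is in bookkeeping the $\tfrac1{\sqrt2}$ normalizations so that the cross terms in (BT-1) match exactly, and in recalling that $D(-A^{\star})=H^1$ is precisely the domain on which the integration by parts produces no hidden regularity issues. The normalization constants are in fact the reason the factor $\tfrac12$ appears and makes the Green's identity balance, so the ``hard part'', if any, is just confirming that the chosen $\Gamma_1,\Gamma_2$ are the right ones rather than off by a constant.
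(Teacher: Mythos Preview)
Your proposal is correct and follows essentially the same approach as the paper: verify (BT-1) by cancelling the $P_0$-terms via $P_0^{\star}=-P_0$, integrate by parts on the $P_1$-terms to produce the boundary expression $\langle P_1x(b),y(b)\rangle_{\mathbb{C}^d}-\langle P_1x(a),y(a)\rangle_{\mathbb{C}^d}$, and then expand $\langle\Gamma_1x,\Gamma_2y\rangle+\langle\Gamma_2x,\Gamma_1y\rangle$ to match; for (BT-2) the paper likewise solves for the endpoint values and takes the affine interpolant. One small bookkeeping remark: (BT-1) is stated in terms of $A^{\star}$, not $-A^{\star}$, so in your write-up be careful to track that overall sign when passing from $\langle -A^{\star}x,y\rangle+\langle x,-A^{\star}y\rangle$ back to the identity as formulated in Definition~\ref{dfn-BT}.
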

\begin{proof} By Lemma \ref{LEM-5} the maps $\Gamma_1$, $\Gamma_2\colon D(-A^{\star})\rightarrow\mathbb{C}^d$ are well-defined. We have to check the two conditions in Definition \ref{dfn-BT}.

\smallskip

Let $x, y\in H^1$ be given. We compute
\begin{eqnarray*}
\langle{}A^{\star}x,y\rangle{}_{L^2} + \langle{}x,A^{\star}y\rangle{}_{L^2}\hspace{-5.5pt}
& = &\hspace{-7pt}-{\textstyle\int_a^b}\langle{}P_0x(\xi)+P_1x'(\xi),y(\xi)\rangle{}_{\mathbb{C}^d} +\langle{}x(\xi),P_0y(\xi)+P_1y'(\xi)\rangle{}_{\mathbb{C}^d}d\xi\\
& = &\hspace{-7pt}-{\textstyle\int_a^b}\langle{}P_1x'(\xi),y(\xi)\rangle{}_{\mathbb{C}^d}d\xi - {\textstyle\int_a^b}\langle{}x(\xi),P_1y'(\xi)\rangle{}_{\mathbb{C}^d}d\xi\\
& = & \langle{}P_1x(\xi),y(\xi)\rangle{}_{\mathbb{C}^d}\big|_a^b+ {\textstyle\int_a^b}\langle{}x(\xi),P_1y'(\xi)\rangle{}_{\mathbb{C}^d}d\xi - {\textstyle\int_a^b}\langle{}x(\xi),P_1y'(\xi)\rangle{}_{\mathbb{C}^d}d\xi\\
\phantom{\textstyle\int_a^b}& = & \langle{}P_1x(b),y(b)\rangle{}_{\mathbb{C}^d}-\langle{}P_1x(a),y(a)\rangle{}_{\mathbb{C}^d}\\
\phantom{\textstyle\int_a^b}& = & {\textstyle\frac{1}{2}}\bigl[\langle{}P_1(x(b)-x(a)),y(b)+y(a)\rangle{}_{\mathbb{C}^d} +\langle{}x(b)+x(a),P_1(y(b)-y(a))\rangle{}_{\mathbb{C}^d}\bigr]\\
\phantom{\textstyle\int_a^b}& = & \langle{}\Gamma_1x,\Gamma_2y\rangle{}_{\mathbb{C}^d} + \langle{}\Gamma_2x,\Gamma_1y\rangle{}_{\mathbb{C}^d}
\end{eqnarray*}
which establishes (BT-1). Let $y_1, y_2\in\mathbb{C}^d$ be given. Define $x\colon[a,b]\rightarrow\mathbb{C}^d$ via
$$
x(\xi)={\textstyle\frac{1}{\sqrt{2}}}\bigl(y_2-P_1^{-1}y_1+2\,{\textstyle\frac{\xi-a}{b-a}}P_1^{-1}y_1\bigr)
$$
for $\xi\in[a,b]$. Thus, we have $x\in H^1$, $x(a)={\textstyle\frac{1}{\sqrt{2}}}(y_2-P_1^{-1}y_1)$ and $x(b)={\textstyle\frac{1}{\sqrt{2}}}(y_2+P_1^{-1}y_1)$. Therefore
$$
\Gamma_1x = {\textstyle\frac{1}{2}} P_1 (y_2+P_1^{-1}y_1-y_2+P_1^{-1}y_1)=y_1 \;\; \text{ and } \;\;\Gamma_2x = {\textstyle\frac{1}{2}} ( y_2+P_1^{-1}y_1 +y_2-P_1^{-1}y_1)=y_2
$$
hold. This establishes (BT-2).
\end{proof}

\smallskip

For the rest of the section we fix the boundary triplet $(\mathbb{C}^d,\Gamma_1,\Gamma_2)$. Below we apply the results of Section \ref{SEC-4} to this triplet. In particular, the abstract space $H$ appearing in the definition of $\mathcal{K}$ in Section \ref{SEC-4} is from now \red{on} always the Hilbert space $\mathbb{C}^d$.  

\smallskip

\begin{lem}\label{NEW-LEM} The map $\Theta\colon\mathcal{W}\rightarrow\mathcal{K}$, $\Theta(W)=-(W_1+W_2)^{\red{-1}}(W_1-W_2)$, for $W=[W_1\;W_2]$ with $W_1$, $W_2\in\mathbb{C}^{d\times d}$, is well-defined, surjective and
$$
D((\Psi\circ\Theta)(W))=\big\{x\in H^1\:;\:W{\textstyle\columnvec{\Gamma_1x}{\Gamma_2x}}=0\big\}
$$
holds for every $W\in\mathcal{W}$.
\end{lem}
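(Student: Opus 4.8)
The plan is to fix $W=[\,W_1\;W_2\,]\in\mathcal{W}$, abbreviate $S=W_1+W_2$ and $D=W_1-W_2$, and organize everything around the identity
$$
W\Sigma W^{\star}=W_1W_2^{\star}+W_2W_1^{\star}={\textstyle\frac12}\bigl(SS^{\star}-DD^{\star}\bigr),
$$
obtained by multiplying out both sides; it turns the hypothesis $W\Sigma W^{\star}\geqslant0$ into the comparison $DD^{\star}\leqslant SS^{\star}$. I would also record that $[\,S\;D\,]=[\,W_1\;W_2\,]\cdot\frac12\bigl[\begin{smallmatrix}I&I\\ I&-I\end{smallmatrix}\bigr]$ with the second factor invertible, hence $\rk[\,S\;D\,]=\rk W=d$.

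For well-definedness of $\Theta$ I would first check that $S$ is invertible: if $S^{\star}v=0$ then $0=\langle SS^{\star}v,v\rangle_{\mathbb{C}^d}\geqslant\langle DD^{\star}v,v\rangle_{\mathbb{C}^d}=\|D^{\star}v\|_{\mathbb{C}^d}^2\geqslant0$ forces $D^{\star}v=0$, so $[\,S\;D\,]^{\star}v=0$, and since $\rk[\,S\;D\,]=d$ makes $[\,S\;D\,]^{\star}$ injective, $v=0$. Thus $K:=\Theta(W)=-S^{-1}D\in L(\mathbb{C}^d)=L(H)$ is defined, and conjugating $DD^{\star}\leqslant SS^{\star}$ by $S^{-1}$ gives $KK^{\star}=S^{-1}DD^{\star}S^{-\star}\leqslant S^{-1}SS^{\star}S^{-\star}=\id_H$, whence $\|K\|_{L(H)}^2=\|KK^{\star}\|_{L(H)}\leqslant1$, i.e. $K\in\mathcal{K}$.

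Surjectivity is then immediate: given $K\in\mathcal{K}$, put $W_1=\frac12(\id_H-K)$ and $W_2=\frac12(\id_H+K)$, so that $S=\id_H$, $D=-K$ and hence $\Theta(W)=-S^{-1}D=K$; moreover $\rk W=d$ (as $W_1+W_2=\id_H$) and $W\Sigma W^{\star}=\frac12(\id_H-KK^{\star})\geqslant0$, so $W\in\mathcal{W}$. For the domain identity I would start from the definition \eqref{PSI} of $\Psi$ and from Lemma \ref{LEM-5} ($D(-A^{\star})=H^1$): a point $x$ lies in $D((\Psi\circ\Theta)(W))$ iff $x\in H^1$ and the boundary relation attached to $K$ holds for the pair $(\Gamma_1x,\Gamma_2x)$. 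Substituting $K=-S^{-1}D$ and multiplying that relation by the invertible matrix $S$ turns it into a linear relation between $W_1\Gamma_1x$ and $W_2\Gamma_2x$ which, after collecting terms, reads $W_1\Gamma_1x+W_2\Gamma_2x=W\columnvec{\Gamma_1x}{\Gamma_2x}=0$; since by (BT-2) the pair $(\Gamma_1x,\Gamma_2x)$ attains every value of $\mathbb{C}^d\times\mathbb{C}^d$ as $x$ runs through $H^1$, this equivalence of conditions is exactly the asserted equality of domains.

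The step I expect to be the real obstacle is the well-definedness of $\Theta$, since that is where the force of the matrix hypothesis $W\Sigma W^{\star}\geqslant0$ has to be extracted and it must simultaneously deliver the invertibility of $W_1+W_2$ and the norm bound for $-(W_1+W_2)^{-1}(W_1-W_2)$. The device that unlocks both is the rewriting $W\Sigma W^{\star}=\frac12(SS^{\star}-DD^{\star})$ followed by the conjugation $DD^{\star}\leqslant SS^{\star}\Rightarrow(S^{-1}D)(S^{-1}D)^{\star}\leqslant\id_H$; with that dictionary between $W$ and $K$ in hand, surjectivity and the domain computation reduce to routine bookkeeping with $\Gamma_1$ and $\Gamma_2$.
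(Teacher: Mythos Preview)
Your approach is essentially the paper's: both derive $SS^{\star}\geqslant DD^{\star}$ from $W\Sigma W^{\star}\geqslant0$, use it to get invertibility of $S=W_1+W_2$ and then contractivity of $K=-S^{-1}D$ by conjugation, exhibit an explicit preimage for surjectivity, and finally identify the $\Psi$-domain by left-multiplying the boundary relation by the invertible $S$. Two inessential remarks: the factor $\tfrac12$ in your rank identity $[\,S\;D\,]=[\,W_1\;W_2\,]\cdot\tfrac12\bigl[\begin{smallmatrix}I&I\\ I&-I\end{smallmatrix}\bigr]$ is misplaced (but harmless, since the matrix is invertible either way), and the appeal to (BT-2) in the last sentence is unnecessary---once you have shown that for each $x\in H^1$ the two boundary conditions are equivalent, the equality of domains is immediate.
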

\begin{proof} We remember that the estimate $M_1\geqslant M_2$ for $M_1$, $M_2\in\mathbb{C}^{d\times{}d}$ means by definition that $M_1-M_2\geqslant0$ holds; $M\geqslant0$ for a matrix $M$ means $\langle{}M\xi,\xi\rangle{}_{\mathbb{C}^d}\geqslant0$ for all $\xi\in\mathbb{C}^d$.

\smallskip

Let $W=[W_1\;W_2]\in\mathcal{W}$ with $W_1$, $W_2\in\mathbb{C}^{d\times{}d}$ be given. We compute $W\Sigma{}W^{\star}=W_1W_2^{\star}+W_2W_1^{\star}\geqslant0$, that is $W_2W_1^{\star}\geqslant -W_1W_2^{\star}$ and $W_1W_2^{\star}\geqslant-W_2W_1^{\star}$. Therefore,
\begin{eqnarray*}
(W_1+W_2)(W_1+W_2)^{\star} &=& W_1W_2^{\star}+W_1W_2^{\star}+W_2W_1^{\star}+W_2W_2^{\star}\\
& \geqslant & W_1W_2^{\star}-W_2W_1^{\star}-W_1W_2^{\star}+W_2W_2^{\star}\\
& = & (W_1-W_2)(W_1-W_2)^{\star}\\
& \geqslant & 0.
\end{eqnarray*}
Let $\zeta\in\mathbb{C}^d$ and assume $W_1^{\star}\zeta+W_2^{\star}\zeta=0$. By the above $\langle{}(W_1-W_2)^{\star}\zeta,(W_1-W_2)^{\star}\zeta\rangle{}_{\mathbb{C}^d}=0$, i.e., $W_1^{\star}\zeta-W_2^{\star}\zeta=0$ holds. Both equations together yield $W_1^{\star}\zeta=W_2^{\star}\zeta=0$, i.e., $W\zeta=[W_1\;W_2]\zeta=0$. Since $\rk W = d$ it \red{follows that} $\zeta=0$ and we proved that $W_1+W_2$ and thus also $(W_1+W_2)^{\star}$ is invertible. Using the estimate above again, we get
$$
\Theta(W)\Theta(W)^{\star}=(W_1+W_2)^{-1}(W_1-W_2)(W_1-W_2)^{\star}((W_1+W_2)^{\star})^{-1}\leqslant\id_{\mathbb{C}^d}
$$
which shows $\|\Theta(W)\zeta\|_{\mathbb{C}^d}^2=|\langle{}\Theta(W)\Theta(W)^{\star}\zeta,\zeta\rangle{}_{\mathbb{C}^d}|\leqslant1$ for every $\zeta\in\mathbb{C}^d$, i.e., $\Theta(W)\in\mathcal{K}$. Now we compute
$$
W = {\textstyle\frac{1}{2}}\bigl[W_1+W_2+W_1-W_2\;\;W_1+W_2-W_1+W_2\bigr] = S\bigl[\Theta(W)-\id_{\mathbb{C}^d}\;\;-(\Theta(W)+\id_{\mathbb{C}^d})\bigr]
$$
with $S=-\frac{1}{2}(W_1+W_2)$. Since $S$ is invertible, we have
$$
D((\Psi\circ\Theta)(W))=\big\{x\in H^1 \:;\:[\Theta(W)-\id_{\mathbb{C}^d}\;\;-(\Theta(W)+\id_{\mathbb{C}^d})]{\textstyle\columnvec{\Gamma_1x}{\Gamma_2x}}=0\big\}=\big\{x\in H^1\:;\:W{\textstyle\columnvec{\Gamma_1x}{\Gamma_2x}}=0\big\}.
$$
Let $K\in\mathcal{K}$ be given. We put $W=[K-\id_{\mathbb{C}^d}\;\;-(K+\id_{\mathbb{C}^d})]$ and compute
$$
\Theta(W) = -(K-\id_{\mathbb{C}^d}-(K+\id_{\mathbb{C}^d}))^{-1}(K-\id_{\mathbb{C}^d}+(K+\id_{\mathbb{C}^d}))=K.
$$
Moreover we have $W\Sigma{}W^{\star}=2(\id_{\mathbb{C}^d}-KK^{\star})\geqslant0$ since $K\in\mathcal{K}$ implies $KK^{\star}\leqslant\id_{\mathbb{C}}$. Let $\zeta\in\mathbb{C}^d$ be given with
$$
W^{\star}\zeta=\bigl[(K-\id_{\mathbb{C}^d})\zeta\;\;-(K+\id_{\mathbb{C}^d})\zeta\bigr]=0.
$$
Thus, $(K-\id_{\mathbb{C}^d})\zeta=0$ and $(K+\id_{\mathbb{C}^d})\zeta=0$. Consequently, $\zeta=0$ and we obtain that $\rk W=\rk W^{\star}=d$. This establishes the surjectivity of $\Theta$.
\end{proof}

\smallskip

Now we apply the results of Section \ref{SEC-4}. We remember that we already fixed a boundary triplet and that all symbols introduced in Section \ref{SEC-4} have to be understood with respect to this particular triplet.

\smallskip

\textit{Proof\,(of Proposition \ref{PROP-PHS}).} Let $W\in\mathcal{W}$ be given. By Lemma \ref{NEW-LEM}, we have $\Theta(W)\in\mathcal{K}$ with
$$
D((\Psi\circ\Theta)(W))=\big\{x\in H^1\:;\:W{\textstyle\columnvec{\Gamma_1x}{\Gamma_2x}}=0\big\}=D(B)
$$
and $(\Psi\circ\Theta)(W)\in\mathcal{D}$\red{, which generates a $\Cnull$-semigroup by the Lumer-Phillips theorem, see Section \ref{SEC-3}}. By Proposition \ref{PROP-1} therefore $(\Psi\circ\Theta)(W)\subseteq-A^{\star}$ holds. Lemma \ref{LEM-5}, together with the equality of domains that we established already, yields
$$
(\Psi\circ\Theta)(W)x=P_0x+P_1x'=Bx
$$
for $x\in D((\Psi\circ\Theta)(W))=D(B)$.

\smallskip

For the second part let $B\colon D(B)\rightarrow L^2$ be an extension of $A$ that generates a $\Cnull$-semigroup of contractions. By the Lumer-Phillips theorem, $B$ is a maximal dissipative extension of $A$, i.e., $B\in\mathcal{D}$. By Lemma \ref{NEW-LEM} there exists $W\in\mathcal{W}$ such that $\Theta(W)=\Phi(B)\in\mathcal{K}$ holds, from whence the equality
$$
B=(\Psi\circ\Phi)(B)=(\Psi\circ\Theta)(W)
$$
follows. The first part of the proof shows that $B$ is of the desired form.\hfill\qed


\section{Notes and References}\label{SEC-7}

Below we give the references to those papers and books in which the results presented in the preceding sections have been originally proved. In addition, we comment these results and the methods we used for their proofs. We give detailed references and point out alternative approaches.

\bigskip

\begin{center}
Section \ref{SEC-2}.
\end{center}

Our notion of a dissipative operator in Definition \ref{dissipative}(i) is widely used in the  literature on $\Cnull$-semigroups, cf.~Engel, Nagel \cite{EN} and Pazy \cite{Pazy}. In the symmetric situation, and in particular in the book \cite{GG} by Gorbachuk, Gorbachuk, which is often used as a reference for the theory of boundary triplets, $A$ is by definition dissipative if $\Im\langle{}Ax,x\rangle{}\geqslant0$ holds for all $x\in D(A)$. This corresponds to passing from the operator $A$ to $iA$. We like to refer also to the book \cite{Nagy} by Sz.-Nagy, Foias, Bercovici, K{\'e}rchy, in particular to \cite[IV.4]{Nagy}.

\smallskip

Our terminology of maximal dissipativity in Definition \ref{dissipative}(ii) is defined via extensions. In \cite[Definition 3.3.1]{Pazy} the notion of m-dissipative operators is used. An operator $A$ is \textit{m-dissipative} if $A$ is dissipative and $\lambda-A$ is surjective for some or, equivalently, all $\lambda>0$. The relation between these two, a priori different, notions depends on the underlying situation. In the previous sections the operator $A$ under consideration was always assumed to be densely defined. Dissipative, maximal dissipative and m-dissipative operators can however be studied without this assumption. Indeed, even if $X$ is only a Banach space, one can define the three notions: An operator $A$ is then by definition 
dissipative if the condition in Lemma \ref{LEM}(iv) holds, cf.~\cite[Definition II.3.13]{EN}. If now $A$ is densely defined, then $A$ is maximal dissipative if and only if $A$ is m-dissipative. For Hilbert spaces this follows from the material presented in Section \ref{SEC-3} and our comments below. For Banach spaces we refer to \cite[Definition II.3.13 and Proposition II.3.23]{EN} together with \cite[Section 1.4 and p.\,81]{Pazy}. If, on the other hand, $X$ is a Hilbert space then every m-dissipative operator is automatically densely defined, in fact this is even true for reflexive Banach spaces \cite[Corollary II.3.20]{EN}. In this situation we thus derive that every m-dissipative operator is maximal dissipative. The converse is finally not true even if $X$ is Hilbert: The results \cite[footnote on p.\,201]{P1959} provide an example of a maximal dissipative operator which is not densely defined by \cite[Lemma 1.1.3]{P1959} and consequently cannot be m-dissipative.

\smallskip

The decomposition in Lemma \ref{DECOMP-LEM} can alternatively be proved by adapting the proof of \cite[Lemma X.2.13]{C} where symmetric operators are considered. Our proof is an adaption of \cite[Proposition 3.7]{S}.

\bigskip

\begin{center}
Section \ref{SEC-3}.
\end{center}

The version of the Lumer-Phillips theorem in Section \ref{SEC-3} is adjusted to our definition of maximal dissipativity. A popular alternative is the following \cite[Theorem 4.3]{Pazy}: If $A$ is densely defined then the following are equivalent.
\begin{compactitem}\vspace{3pt}
\item[(i)] $A$ is dissipative and $\lambda-A$ is surjective for some or, equivalently, for all $\lambda>0$,\vspace{2pt}
\item[(ii)] $A$ generates a $\Cnull$-semigroup of contractions. 
\end{compactitem}\vspace{3pt}
In \cite[Section II.3.b]{EN} more sophisticated versions can be found. The reader familiar with the version stated above will find it easy to extract the equivalence of
\begin{compactitem}\vspace{3pt}
\item[(iii)] $A$ is maximal dissipative in the sense of Definition \ref{dissipative}(i),
\end{compactitem}\vspace{3pt}
and (ii) from the proof given in Section \ref{SEC-3}.

\bigskip

\begin{center}
Section \ref{SEC-4} and Section \ref{SEC-5}.
\end{center}

For precise historical information on the symmetric counterparts of the statements explained in Section \ref{SEC-2}--\ref{SEC-5} we refer to Gorbachuk, Gorbachuk \cite[p.~320]{GG}. 

\smallskip

Let $X$ be a Hilbert space and $A\colon D(A)\rightarrow X$ be a closed, densely defined and skew-symmetric operator. Define $L\colon \ran(1-A)\rightarrow X$ via $L(x-Ax)=x+Ax$. Then by Phillips \cite[p.\,199-201]{P1959} there is a one-to-one correspondence
\begin{equation}\label{PH}
\bigg\{\hspace{-3pt}\begin{array}{c}
B\colon D(B)\rightarrow X\\
A\subseteq B \; \& \; B \text{ dissipative}
\end{array}\hspace{-3pt}\bigg\}
\begin{array}{c}\vspace{-3pt}
\stackrel{\Phi}{\longrightarrow}\\\vspace{-5pt}
\stackrel{\textstyle\longleftarrow}{\scriptstyle\Psi}\vspace{6pt}
\end{array}
\bigg\{\hspace{-3pt}\begin{array}{c}
K\colon D(K)\rightarrow X\\
L\subseteq K \; \& \; K \text{ contractive}
\end{array}\hspace{-3pt}\bigg\}
\end{equation}
where the maps $\Phi$ and $\Psi$ are given by $\Phi(B)\colon\!\ran(1-B)\rightarrow X,\; \Phi(B)(x-Bx)=x+Bx$ and $\Psi(K)\colon\!\ran(1+K)\rightarrow X, \;\Psi(K)(Kx+x)=Kx-x$. The maximal dissipative extensions correspond exactly to those contractions which are defined on the whole space. The method of boundary triplets can be regarded as a refinement of this classical result: In \eqref{PH} the contractions which are used to parametrize the dissipative extensions are defined on the initial Hilbert space $X$, which is typically an infinite dimensional Hilbert space. But the above suggests that for the parametrization a smaller space may be enough. Since $\Phi(B)|_{\ran(1-A)}=L$ holds for every extension $B$ of $A$ it is reasonable to give the space $X/\ran(1-A)=\ker(1-A^{\star})=:H$ a try as domain of the contractions. Killing \red{that} part of the domains on which there is no choice for the values of $B$ means to consider $D(-A^{\star})/D(A)=\ker(1-A^{\star})\oplus\ker(1+A^{\star})$, cf.~Lemma \ref{DECOMP-LEM}. 
This discrepancy gives rise to the characterization that the re-parametrization works if $\ker(1-A^{\star})\cong\ker(1+A^{\star})$ holds. The precise details are contained in Section 
\ref{SEC-5}. Note that in the book \cite{GG} by Gorbachuk, Gorbachuk only the sufficiency part of Theorem \ref{EX-THM} is given; the necessity it taken from Schm\"udgen \cite[Proposition 14.5]{S}.

\bigskip

\begin{center}
Section \ref{SEC-6}.
\end{center}

We refer to the book \cite{JZ} by Jacob, Zwart for a detailed introduction to port-Hamiltonian systems \red{which also includes} an introduction to semigroup theory and many examples and applications from control and systems theory. The exposition in \cite{JZ} avoids the notion of boundary triplets and uses a direct Lumer-Phillips argument. The technical ingredients for this are however the same as in the approach given in Section \ref{SEC-6}. The latter follows mainly Le Gorrec, Zwart, Maschke \cite{GZM2005}, only some details are different: Firstly, in the latter paper the differential operator is of order $N\geqslant1$ whereas we restricted ourselves to the case $N=1$ to make the computations somewhat easier. Secondly, in \cite{GZM2005} the matrices $P_i$ are real and in our case they are complex. For further literature, in particular on applications of port-Hamiltonian systems in control theory, we refer to the references given in Section \ref{SEC-1}. In particular, we refer to \cite{GZM2005, JZ} for physical interpretations which show that the special form of boundary conditions that are used in Theorem \ref{PHS-THM} is reasonable for applications.

\bigskip

\footnotesize

{\sc Acknowledgements. }The author likes to thank B.~Augner, A.~B\'{a}tkai, B.~Farkas and B.~Jacob for several helpful comments and discussions. In addition, he likes to thank B.~Farkas for carefully reading this manuscript. \red{Finally the author likes to thank the anonymous referees for their corrections and very valuable comments.}  

\normalsize

\bigskip

\normalsize

\bibliographystyle{amsplain}

\end{document}